\newcommand{\re}{{\mathbb R}}
\newcommand{\n}{{\mathbb N}}
\newcommand{\cA}{{\cal{A}}}
\newcommand{\cV}{{\cal{V}}}
\newcommand{\cL}{{\cal{L}}}
\newcommand{\cB}{{\cal{B}}}
\newcommand{\cT}{{\cal{T}}}
\newcommand{\cH}{{\cal{H}}}
\newcommand{\cS}{{\cal{S}}}
\newcommand{\cR}{{\cal{R}}}
\newcommand{\cC}{{\cal C}}
\newcommand{\cX}{{\cal{X}}}
\newcommand{\bT}{{\boldsymbol T}}
\newcommand{\balpha}{{\boldsymbol \alpha}}
\newcommand{\eps}{\varepsilon}
\newtheorem{prop}[theorem]{Proposition}
\newtheorem{cor}[theorem]{Corollary}
\newtheorem{remark}[theorem]{Remark}
\newtheorem{ex}[theorem]{Example}
\newtheorem{defi}[theorem]{Definition}
\def\eps{{\varepsilon}}
\title{Switching systems with dwell time:
computation of the maximal Lyapunov exponent\thanks{This work was
supported by  the Italian INdAM - G.N.C.S., and Russian RSF Grant 17-11-01027}} 
\author{
Yacine Chitour \thanks{Laboratoire des Signaux et Syst\`emes
Universit\'e Paris Saclay
91192 Gif-sur-Yvette, France.
(\email{Yacine.Chitour@l2s.centralesupelec.fr})}.
\and
Nicola Guglielmi\thanks{Gran Sasso Science Institute, via Crispi 7,
 I-$67100$ L'Aquila, Italy. 
(\email{nicola.guglielmi@gssi.it}).}
\and 
Vladimir Yu. Protasov\thanks{University of L'Aquila (Italy), 
Faculty of Computer Science of  National Research University Higher School of Economics 
(Moscow, Russia). (\email{v-protassov@yandex.ru}).}
\and 
Mario Sigalotti\thanks{Laboratoire Jacques-Louis Lions (LJLL)
Sorbonne Universit\'e
Paris, France (\email{Mario.Sigalotti@inria.fr}).}
}
\begin{document}
\maketitle
\newcommand{\slugmaster}{%
\slugger{siads}{xxxx}{xx}{x}{\small 1 December 2019}}
%
  
%
%

\begin{abstract}
We study asymptotic stability of continuous-time systems with mode-dependent guaranteed dwell time. These systems are reformulated as special cases of a general class of mixed (discrete-continuous) linear switching systems on graphs, 
in which some modes correspond to discrete actions and some others correspond to
continuous-time evolutions. Each discrete action has its own positive 
weight which accounts
for its time-duration. We develop a theory of stability for the mixed systems; in particular, 
we prove the existence of an invariant Lyapunov 
norm for mixed systems on graphs and study its structure in various cases, including  discrete-time systems for which discrete actions have inhomogeneous time durations. 
This allows us to adapt recent methods for the joint spectral radius 
computation (Gripenberg's algorithm and the Invariant Polytope 
Algorithm)  to  compute the Lyapunov exponent of mixed systems on graphs.  
%
%
\end{abstract}

\begin{keywords}
Switching systems; dwell time; Lyapunov exponent; polytopic Lyapunov function; 
constrained switching; invariant polytope algorithm.
\end{keywords}

\begin{AMS}
  37B25, 37M25, 15A60, 15-04
\end{AMS}

\section{Introduction}
\label{sect:1}

Stability of continuous-time 
linear switching systems 
with fixed or guaranteed mode-dependent dwell time has generated a large amount of work in recent years, both from the theoretical and the numerical viewpoint, due to their widespread use in industry (see, for instance, \cite{Basso} as regards multilevel power converters and \cite{K2010} for on-line trajectory generation in robotics). 
 These systems represent an important class of hybrid dynamical systems, i.e., exhibit both continuous and discrete dynamic behavior \cite{Teel00}. 
  They usually consist of a finite number of subsystems and a discrete rule which dictates switching between them. From the theoretical perspective,
 the studies devoted to guaranteed (positive) dwell time started with the seminal works of Hespanha, Liberzon, and  Morse and  (see \cite{HespanhaMorse,LiberzonMorse,Morsed} and also 
 \cite{Liberzon}) and range from sufficient conditions for stability or stabilizability to $L_2$-stability \cite{CMS2}. Works considering guaranteed mode-dependent dwell time also provide sufficient conditions for stability or stabilizability in terms of LMIs or looped-functionals \cite{BS,GC}, which can be also extended to uncertain switching systems \cite{Xiang2,Xiang1}. More generally, when the dwell time is not fixed, the systems under consideration fall into the class of switching systems on non-uniform time domains or time scales (see \cite{Taousser}). On the numerical perspective for such issues, it turns out that the results of many of the previous works, since they deal with sufficient conditions for stability, also yield algorithms providing (only) upper bounds for the minimal dwell time insuring stability. These algorithms usually are based on LMIs or sum of squares programs \cite{chesi0}, but also on homogeneous rational Lyapunov functions \cite{Chesi1}. An important feature of such algorithms is that the provided upper bounds are guaranteed to converge to the minimal dwell time ensuring stability if one lets the degree of the approximating Lyapunov function (polynomial or rational) tend to infinity. 

In this paper, we propose new 
algorithms for computing the maximal Lyapunov exponent of continuous-time 
linear switching systems 
with fixed or guaranteed mode-dependent dwell time, which provide 
arbitrarily tight
upper and lower bounds for the minimal dwell time ensuring stability.
To this end we first introduce and 
analyse weighted discrete-time switching systems (Section~\ref{sec:2}), 
which are discrete-event switching systems with arbitrary switching for which the time-duration of every 
discrete event (the \emph{weight}) depends on the mode. 
Weights clearly affect the value of the joint spectral radius which can be associated with such systems. 
If all the matrices are nondegenerate, 
then a weighted system can be interpreted as 
a continuous-time switching system with fixed dwell times for each mode. 
Such systems behave similarly to classical discrete-time switching
systems (which correspond to the case of unit weights), but exhibit some significant distinctive feature. 
Nevertheless, we will see that 
the concept of invariant norm and the main algorithms of the  
joint spectral radius computation (Gripenberg's algorithm~\cite{G} and the invariant 
polytope algorithm~\cite{GWZ05,GP13, Mej}) can be extended to weighted systems after some 
modifications. 
  
Then in Section~\ref{sec:3} we introduce and study \emph{mixed systems}, 
which are a special class of hybrid systems: some modes correspond to 
discrete actions and some others correspond to continuous-time evolutions. With each discrete action is associated a weight which accounts for its time-duration. 
This type of systems with hybrid time domain is closely related to
another important class of switching systems, namely that of  impulsive switching systems \cite{Girard0,Xu1}. Again, when all discrete actions are nondegenerate, a mixed system can be interpreted as a continuous-time system with fixed dwell time for some 
modes and free dwell time for the others. 
We show in Theorem~\ref{th.40} how to extend to mixed systems existence results of extremal and invariant norms
known for discrete-time and continuous-time systems. 

Even mixed systems are not enough to tackle our original problem of efficiently computing Lyapunov exponents for continuous-time systems with guaranteed mode-dependent dwell time. 
In Section~\ref{sec:4} we make the next step by introducing constrained mixed systems 
or, more generally, mixed systems on graphs. They allow one to model 
constrains imposed on the 
order of activation of the modes 
along a trajectory. 
Such constraints on the  order are encoded in a multigraph~$G$. For classical discrete-time switching systems,
this generalization has been actively studied in the recent literature~\cite{CGP, D, K, PhJ1,  PhJ3, SFS} and such models are special occurrences of hybrid automata (cf.~\cite{VdSS}).
We extend the theory to the case including both discrete-time and continuos-time dynamics, introducing a rather general class of mixed systems on graphs, 
proving existence of extremal multinorms and characterizing them in terms of invariant polytopes. 
As a consequence, we extend the main algorithms for computing 
the Lyapunov exponent to such mixed systems on graphs. 

Then in Section~\ref{sec:5} we eventually address our main problem. We show that a continuous-time switching system with guaranteed mode-dependent dwell time can be seen as a special case 
of a mixed system on a certain graph. Using the techniques elaborated in 
Section~\ref{sec:4} we show how to decide the stability for those systems and how to  compute 
their Lyapunov exponents. Several illustrative examples are considered along with statistics of the efficiency of the  algorithms depending on the dimension of the system.

\section{Discrete-time weighted systems and continuous-time systems with fixed dwell times}\label{sec:2}

\subsection{Theoretical aspects}

We begin with the simplest case of restricted dwell times, 
when they are fixed for all modes, that is, we consider a continuous-time linear switching system $\dot x \, = \, B (t)x , \, t \in [0, +\infty)$, $x\in\mathbb{R}^d$, 
where at each time~$t$, the matrix $B(t)$ belongs to a finite family 
$\cB = \{B_1, \ldots , B_m\}$ of $d\times d$ matrices. 
We now impose the main restriction: 
each matrix~$B_j$ is associated with its fixed dwell time $\tau_j > 0$, and 
the switching law $B(\cdot)$ is a piecewise-constant function
such that each value $B_j$ is attained in a segment of length~$\tau_j$. 
In other words, every  matrix $B_j$ is switched on for a time exactly equal to $\tau_j$, 
after which it switches to another matrix $B_i$ for a time $\tau_i$
(the case $i=j$ is allowed), and so on. This can be actually seen as a discrete-time  switching system
$x(k) = A(k)x(k-1), \, k\in \n$, where the 
$d\times d$ matrices $A(k)$ are taken from the  set 
$\cA = \{A_1 = e^{\, \tau_1B_1}, \ldots, A_m = e^{\, \tau_mB_m}\}$. 
However, 
from the point of view of the rate of convergence or divergence of the system, 
by contrast with the classical 
framework of discrete-time switching systems where all modes are associated with a unit time duration, here the time duration of each action $A_j$ can be different. We now formally 
introduce the main concept.

Let  $\cA \, = \, \{A_1, \ldots , A_m\}$ be a family of $d\times d$-matrices 
(the \emph{modes}) and 
$\balpha = \{\alpha_1, \ldots , \alpha_m\}$ be a family of strictly  positive real numbers (the \emph{weights}). By $(\cA, \balpha)$ we denote the family of pairs $(A_j, \alpha_j)$, i.e.,  each matrix is equipped with its weight.  
\begin{defi}\label{d.10}
For a given family $(\cA, \balpha)$ as above, the corresponding \emph{weighted discrete-time switching system} (or, simply, \emph{weighted system}) 
is 
$$
x(k) \ = \ A(k)x(k-1)\, , \qquad A(k) \in \cA \, , k \in \n,
$$
 and the transfer from $x(k-1)$ to $x(k)$
takes time $\alpha (k)$, where $\alpha(k) \in \balpha$ is the weight of the 
matrix $A(k)$. 
\end{defi}
Thus, a classical discrete-time switching system is a weighted system with 
unit weight for each mode. 
The stability and asymptotic stability are defined in the same way 
as for the classical case. 
Note that  stability, asymptotic stability, and instability of a weighted system do not depend on the weights, as they are entirely defined by the boundedness of all trajectories
(or their convergence to zero, for the asymptotic stability). 
What really depends on weights is the rate of growth of the trajectory 
which is defined next.

\begin{defi}\label{d.20}
For a given weighted system $(\cA, \balpha)$, the \emph{$\balpha$-weighted joint spectral radius} (\emph{$\balpha$-spectral radius}, for short)
of $\cA$ 
is defined as 
\begin{equation}\label{eq.a-joint}
\rho (\cA, \balpha) \quad = \quad 
\limsup_{k \to \infty}\quad \max_{A(j) \in \cA,\, j = 1, \ldots , k}\quad  \bigl\|A(k)\cdots A(1)
\bigr\|^{\ \frac{1}{\alpha (k)+ \cdots + \alpha(1)}}.
\end{equation}
\end{defi}
Note that the definition of $\balpha$-spectral radius of $\cA$
does not depend on a specific norm on $\mathbb{R}^d$.
 
Let us first show that the $\limsup$ in \eqref{eq.a-joint} is actually a limit. 
This is the object the following result, which is based on a variant of Fekete's lemma presented in the appendix (Lemma~\ref{lem:fekete-convex}).

\begin{lemma}\label{lem:existence-lim}
Given a weighted system  $(\cA, \balpha)$, define 
  $$
  \rho_k= \max_{A(j) \in \cA,\, j = 1, \ldots , k}\quad  \bigl\|A(k)\cdots A(1)
\bigr\|^{\ \frac{1}{\alpha (k)+ \cdots + \alpha(1)}},
$$ for every $k\in\mathbb{N}$.
Then $\rho_k$ converges to $\inf_{j\in \mathbb{N}} \rho_j$.
\end{lemma}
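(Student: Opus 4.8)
The plan is to establish the two bounds $\liminf_{k\to\infty}\rho_k \ge \inf_{j\in\n}\rho_j$ and $\limsup_{k\to\infty}\rho_k\le \inf_{j\in\n}\rho_j$, which together force $\lim_{k}\rho_k=\inf_{j}\rho_j$. The first is immediate, since by construction $\rho_k\ge\inf_{j}\rho_j$ for every $k$, so the sequence is bounded below by $\inf_j\rho_j$ and the same holds for its $\liminf$. All the work is in the upper inequality, and this is precisely where the weights introduce a difficulty absent from the classical joint spectral radius. It is convenient to pass to logarithms: for a word $w=(w_k,\dots,w_1)$ with letters in $\{1,\dots,m\}$ I would write $A_w=A_{w_k}\cdots A_{w_1}$, $L_w=\log\|A_w\|$, and let $\alpha_w=\alpha_{w_k}+\cdots+\alpha_{w_1}$ be its total weight, so that $\log\rho_k=\max_{|w|=k}L_w/\alpha_w$ (with the convention $\log 0=-\infty$, words with $A_w=0$ never realise the maximum and may be discarded).

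Concatenating two words $u,v$ yields $A_{uv}=A_uA_v$, so submultiplicativity of the norm gives $L_{uv}\le L_u+L_v$, whereas the weight is exactly additive, $\alpha_{uv}=\alpha_u+\alpha_v$. Thus $L_w$ is subadditive along concatenation while $\alpha_w$ is additive, which is exactly the configuration governed by the convex variant of Fekete's lemma, Lemma~\ref{lem:fekete-convex}. The mechanism behind that lemma, which I would make explicit here, is the \emph{mediant inequality} $\frac{L_u+L_v}{\alpha_u+\alpha_v}\le\max\!\big(\frac{L_u}{\alpha_u},\frac{L_v}{\alpha_v}\big)$: the slope of a concatenated word is an $\alpha$-weighted average of the slopes of its parts, hence cannot exceed their maximum.

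Concretely, I would fix $\eps>0$, choose $N$ with $\log\rho_N\le\inf_{j}\log\rho_j+\eps$, and for a long word $w$ of length $k$ cut it into $n=\lfloor k/N\rfloor$ consecutive blocks $u_1,\dots,u_n$ of length $N$ followed by a remainder $v$ of length $<N$. Using $L_{u_i}\le(\log\rho_N)\,\alpha_{u_i}$ for each block and the fact that there are only finitely many possible remainders (so $L_v$ and $\alpha_v$ are bounded by constants $C_1,C_2$ independent of $k$), one obtains $\frac{L_w}{\alpha_w}\le\frac{(\log\rho_N)T+C_1}{T+\alpha_v}$ with $T=\sum_i\alpha_{u_i}\ge(k-N)\alpha_{\min}\to\infty$. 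The main obstacle is exactly visible here: unlike in the unweighted case the exponent $\alpha_w$ depends on which word realises $\rho_k$ and not merely on its length $k$, so $\rho_k$ is not of the form $P_k^{1/k}$ and classical Fekete does not apply; the device that circumvents this is that the remainder $v$ contributes only a bounded additive error whose relative weight $\alpha_v/T$ becomes negligible. Letting $k\to\infty$, the right-hand side tends to $\log\rho_N$ uniformly, so $\limsup_{k}\log\rho_k\le\log\rho_N\le\inf_{j}\log\rho_j+\eps$. Sending $\eps\to0$ and exponentiating, using monotonicity of $\log$ (so that $\inf_{j}\log\rho_j=\log\inf_{j}\rho_j$), would give $\limsup_{k}\rho_k\le\inf_{j}\rho_j$ and complete the argument.
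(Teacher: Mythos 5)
Your proof is correct and follows essentially the same route as the paper: both arguments rest on submultiplicativity of the norm combined with exact additivity of the weights, i.e.\ a weighted Fekete argument. The only difference is presentational --- the paper first derives the two-term convexity inequality $\log(\rho_{j+k})\le \nu_{j,k}\log(\rho_j)+(1-\nu_{j,k})\log(\rho_k)$ and then invokes the abstract Lemma~\ref{lem:fekete-convex} from the appendix, whereas you inline that lemma's block-decomposition mechanism (near-optimal blocks plus a remainder of bounded weight) directly on the matrix products.
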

\begin{proof}
For $j,k\in \mathbb{N}$, let $A(1),\dots,A(j+k)$ be such that 
\[ \rho_{j+k}=\bigl\|A(j+k)\cdots A(1)
\bigr\|^{\ \frac{1}{\alpha (j+k)+ \cdots + \alpha(1)}}.\]
Hence, 
\begin{align*}
\log(\rho_{j+k})&=\frac{1}{\alpha (j+k)+ \cdots + \alpha(1)}\log\left(\bigl\|A(j+k)\cdots A(1)
\bigr\|\right)\\
&\le \frac{1}{\alpha (j+k)+ \cdots + \alpha(1)}\left(\log\left(\bigl\|A(j+k)\cdots A(k+1)
\bigr\|\right)+\log\left(\bigl\|A(k)\cdots A(1)
\bigr\|\right)\right)\\
\le &\frac{\alpha (j+k)+ \cdots + \alpha(k+1)}{\alpha (j+k)+ \cdots + \alpha(1)}\log(\rho_{j})+\frac{\alpha (k)+ \cdots + \alpha(1)}{\alpha (j+k)+ \cdots + \alpha(1)}\log(\rho_{k}).
\end{align*}
As a consequence, for every $j,k\in \mathbb{N}$ there exists $\nu_{j,k}\in (0,1)$ such that 
\[\log(\rho_{j+k})\le \nu_{j,k}\log(\rho_{j})+(1-\nu_{j,k})\log(\rho_{k}),\]
with $\nu_{j,k}\le \frac{\alpha_{\max}}{\alpha_{\min}}\frac{j}{j+k}$. 
The conclusion then follows from Lemma~\ref{lem:fekete-convex} applied to $f(k)=\log(\rho_k)$. 
\end{proof}

In the classical case, when each $\alpha_j$ is equal to one, 
we keep the notation $\rho(\cA)$. The 
weighted joint spectral radius is equal to the biggest rate of asymptotic growth 
of trajectories. However, the weighted joint spectral radius 
is  not a positively homogeneous function of a matrix family $\cA$ as in the classical case. Instead, 
as stated in the next proposition, it
 is a homogeneous function of degree one with respect to $\balpha$.
\begin{prop}\label{p.10}
For an arbitrary weighted system $(\cA, \balpha)$ and for every $\lambda > 0$, 
we have 
\begin{equation}\label{eq.homog}
\rho \, \bigl(\delta_{\lambda}^{\balpha}(\cA)\, , \balpha \bigr) \quad = \quad  
 \lambda \, \rho \, \bigl(\cA\, , \balpha \bigr),
\end{equation}
where $(\delta_{\lambda}^{\balpha})_{\lambda>0}$ is the family  of dilations associated with $\balpha$, i.e., 
$$
\delta_{\lambda}^{\balpha}(A_1, \ldots , A_m)=(\lambda^{\alpha_1}A_1, \ldots , 
\lambda^{\alpha_m}A_m).
$$
\end{prop}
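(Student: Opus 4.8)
The plan is to exploit the fact that each dilation $\delta_{\lambda}^{\balpha}$ multiplies a product by a scalar equal to $\lambda$ raised to the sum of the weights appearing in that product, so that the normalizing exponent $\frac{1}{\alpha(k)+\cdots+\alpha(1)}$ turns this scalar into a factor $\lambda$ that is uniform across all admissible words. First I would fix $\lambda>0$ and an arbitrary admissible word $A(k)\cdots A(1)$ with $A(j)\in\cA$. Writing $\tilde A_j=\lambda^{\alpha_j}A_j$ for the dilated modes, the corresponding product in $\delta_{\lambda}^{\balpha}(\cA)$ is
$$
\tilde A(k)\cdots \tilde A(1)\;=\;\lambda^{\,\alpha(k)+\cdots+\alpha(1)}\,A(k)\cdots A(1),
$$
since each selected matrix $A(j)$ contributes exactly the scalar $\lambda^{\alpha(j)}$ and scalars commute with matrix multiplication. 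Taking norms and raising to the power $\frac{1}{\alpha(k)+\cdots+\alpha(1)}$ gives
$$
\bigl\|\tilde A(k)\cdots \tilde A(1)\bigr\|^{\ \frac{1}{\alpha(k)+\cdots+\alpha(1)}}\;=\;\lambda\,\bigl\|A(k)\cdots A(1)\bigr\|^{\ \frac{1}{\alpha(k)+\cdots+\alpha(1)}}.
$$

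The key point is that the emerging factor $\lambda$ is the same for every admissible word, independently of its length $k$ and of the particular matrices chosen, because the weights $\balpha$ are unchanged under dilation. Since $\lambda>0$, multiplication by $\lambda$ is monotone and therefore commutes with the maximum over all admissible words of length $k$; hence the quantity $\rho_k$ computed for the dilated family $\delta_{\lambda}^{\balpha}(\cA)$ equals $\lambda\,\rho_k$ computed for $\cA$. Passing to the limit as $k\to\infty$, which exists for both families by Lemma~\ref{lem:existence-lim}, yields the claimed identity $\rho\bigl(\delta_{\lambda}^{\balpha}(\cA),\balpha\bigr)=\lambda\,\rho(\cA,\balpha)$.

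There is essentially no serious obstacle here: the statement is a direct consequence of the algebraic identity above, the only subtlety being to verify that the scaling factor is genuinely uniform in the word, which is precisely what allows it to survive both the maximum and the limit. If one prefers not to invoke Lemma~\ref{lem:existence-lim}, the same conclusion follows directly at the level of the $\limsup$ in the definition \eqref{eq.a-joint}, since multiplication by the positive constant $\lambda$ commutes with $\limsup$ as well.
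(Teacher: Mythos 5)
Your proposal is correct and follows essentially the same route as the paper: the single algebraic identity $\tilde A(k)\cdots\tilde A(1)=\lambda^{\alpha(k)+\cdots+\alpha(1)}A(k)\cdots A(1)$, followed by taking norms, the normalizing root, the maximum over words, and the limit. Your version merely spells out more explicitly why the uniform factor $\lambda$ commutes with the maximum and the limit, which the paper leaves implicit.
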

\begin{proof}
For every matrix product, we have 
$$
\Bigl[\, \lambda^{\alpha (k)}A(k) \cdots \lambda^{\alpha(1)}A (1)
\Bigl]^{\frac{1}{\alpha(k) + \cdots + \alpha(1)}}\quad = \quad 
\lambda\, \Bigl[\, A(k) \, \cdots \, A (1)
\Bigl]^{\frac{1}{\alpha(k) + \cdots + \alpha(1)}}\, . 
$$
Taking the maximum over all products of length $k$
 and the limit as $k\to \infty$, we deduce~(\ref{eq.homog}). 
 
\end{proof}

On the other hand, the sign of $\rho(\cA,\balpha)-1$ does not depend on $\balpha$, as 
stated in the next proposition.


\begin{prop}\label{p.20}
Let $(\cA, \balpha)$ be an arbitrary weighted system. 
Then 
$\rho (\cA, \balpha) = 1$ (respectively, $>1$ or $<1$) if and only if 
$\rho(\cA) = 1$ (respectively, $>1$ or $<1$). 
\end{prop}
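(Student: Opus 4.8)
The plan is to reduce everything to two implications, namely $\rho(\cA)>1 \Rightarrow \rho(\cA,\balpha)>1$ and $\rho(\cA)<1 \Rightarrow \rho(\cA,\balpha)<1$, together with their converses; the equality case $\rho(\cA)=1 \iff \rho(\cA,\balpha)=1$ then follows by elimination, since each spectral radius is a single real number that is either $>1$, $=1$, or $<1$. The whole argument rests on the elementary observation that, writing $s_P=\alpha(k)+\cdots+\alpha(1)$ for the total weight of a product $P=A(k)\cdots A(1)$ of length $k$, one has $k\,\alpha_{\min}\le s_P\le k\,\alpha_{\max}$, where $\alpha_{\min}=\min_j\alpha_j>0$ and $\alpha_{\max}=\max_j\alpha_j$. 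Thus the weighted normalizing exponent $1/s_P$ is comparable, up to the fixed factors $1/\alpha_{\min}$ and $1/\alpha_{\max}$, to the unweighted exponent $1/k$.

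For the forward implications I would work with $\mu_k=\max_{|P|=k}\|P\|$, the maximal norm of a length-$k$ product, so that $\rho(\cA)=\lim_k \mu_k^{1/k}$ (a genuine limit, by the classical theory and, in the weighted setting, by Lemma~\ref{lem:existence-lim}). Suppose first $\rho(\cA)>1$; then for some $\rho_0\in(1,\rho(\cA))$ one has $\mu_k\ge \rho_0^{\,k}>1$ for all large $k$. Choosing a maximizing product $P^*$, its norm exceeds $1$, so $\log\|P^*\|>0$, and dividing by $s_{P^*}\le k\alpha_{\max}$ rather than by $k$ only increases the exponentiated quantity:
\[
\|P^*\|^{1/s_{P^*}} \ \ge\ \|P^*\|^{1/(k\alpha_{\max})} \ =\ \mu_k^{1/(k\alpha_{\max})}\ \ge\ \rho_0^{1/\alpha_{\max}}>1 .
\]
Passing to the limit gives $\rho(\cA,\balpha)\ge \rho_0^{1/\alpha_{\max}}>1$. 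Symmetrically, if $\rho(\cA)<1$ then $\|P\|\le\mu_k\le\rho_1^{\,k}<1$ for every length-$k$ product and some $\rho_1\in(\rho(\cA),1)$; now $\log\|P\|<0$, so the same comparison of denominators yields $\|P\|^{1/s_P}\le \mu_k^{1/(k\alpha_{\max})}\le \rho_1^{1/\alpha_{\max}}<1$, whence $\rho(\cA,\balpha)<1$.

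The converse implications are obtained by exactly the same manipulation with the roles of the two normalizations interchanged: starting from $\rho(\cA,\balpha)>1$ one extracts a product with $\|P\|^{1/s_P}\ge\sigma_0>1$, so $\log\|P\|>0$, and bounding $k\le s_P/\alpha_{\min}$ gives $\|P\|^{1/k}\ge\sigma_0^{\alpha_{\min}}>1$, hence $\rho(\cA)>1$; the case $<1$ is analogous. The only point requiring care — and the sole obstacle worth flagging — is the bookkeeping of inequality directions: dividing $\log\|P\|$ by the smaller denominator $s_P$ increases or decreases the result according to the sign of $\log\|P\|$, so one must pair the bound $s_P\le k\alpha_{\max}$ (or $s_P\ge k\alpha_{\min}$) with the correct regime, $\|P\|>1$ or $\|P\|<1$. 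Once the three regimes are separated in this way, no nontrivial estimate remains.
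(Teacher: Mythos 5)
Your argument is correct and rests on exactly the same estimate as the paper's proof: the comparison $k\alpha_{\min}\le \alpha(k)+\cdots+\alpha(1)\le k\alpha_{\max}$ combined with the observation that the direction of the resulting inequality between $\|P\|^{1/s_P}$ and $\|P\|^{1/k}$ depends on whether $\|P\|\ge 1$ or $\|P\|\le 1$. The only cosmetic difference is that the paper states the resulting two-sided bounds $\rho(\cA)^{1/\alpha_{\max}}\le\rho(\cA,\balpha)\le\rho(\cA)^{1/\alpha_{\min}}$ (and their reversal) directly, whereas you organize the same content as four implications plus trichotomy; both are fine.
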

\begin{proof}
Notice that
\[\left(\|A(k)\cdots A(1)\|^{\frac{1}{k}}\right)^{\frac{1}{\alpha_{\max}}}\le \|A(k)\cdots A(1)\|^{\frac{1}{\alpha(k) + \cdots + \alpha(k)}} \le \left(\|A(k)\cdots A(1)\|^{\frac{1}{k}}\right)^{\frac{1}{\alpha_{\min}}}\]
if $\|A(k)\cdots A(1)\|\ge 1$ and 
\[\left(\|A(k)\cdots A(1)\|^{\frac{1}{k}}\right)^{\frac{1}{\alpha_{\min}}}\le \|A(k)\cdots A(1)\|^{\frac{1}{\alpha(k) + \cdots + \alpha(k)}} \le \left(\|A(k)\cdots A(1)\|^{\frac{1}{k}}\right)^{\frac{1}{\alpha_{\max}}}\]
if $\|A(k)\cdots A(1)\|\le 1$.
In particular,
\begin{align*}
\rho (\cA)^{\frac{1}{\alpha_{\max}}}&\le \rho (\cA, \balpha)\le \rho (\cA)^{\frac{1}{\alpha_{\min}}},\quad&\mbox{if $\rho (\cA)>1$},\\
\rho (\cA)^{\frac{1}{\alpha_{\min}}}&\le \rho (\cA, \balpha)\le \rho (\cA)^{\frac{1}{\alpha_{\max}}},&\mbox{if $\rho (\cA)<1$},
\end{align*} 
and $\rho (\cA, \balpha) = 1$ if $\rho (\cA)=1$, proving the proposition.
%
\end{proof}

\begin{remark}\label{rmk:fenichel-weighted}
Proposition~\ref{p.20} 
allows one to generalize to weighted systems the following property of the joint spectral radius: a weighted system $(\cA, \balpha)$ is asymptotically stable if and only if $\rho (\cA, \balpha) < 1$. 
Indeed, as already noticed, $(\cA, \balpha)$ is asymptotically stable if and only if the discrete-time switching system associated with $\cA$ is asymptotically stable, which in turns happens if and only if $\rho(\cA)<1$ (see, for instance, \cite[Corollary 1.1]{J09}). 
\end{remark}

Combining Propositions~\ref{p.10} and \ref{p.20}, we obtain the following. 
\begin{theorem}\label{th.10}
The weighted joint spectral radius $\rho(\cA, \balpha)$ 
is equal to $\lambda^{-1}$, where $\lambda$ is found as the unique solution of the equation 
\begin{equation}\label{eq.wu}
\rho \  \bigl( \delta_\lambda^\balpha(\cA)
\bigr) \quad = \quad  1. 
\end{equation}
\end{theorem}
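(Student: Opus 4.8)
The plan is to read off the theorem directly by combining the homogeneity of Proposition~\ref{p.10} with the sign characterization of Proposition~\ref{p.20}, so that essentially no new estimate is needed: the whole content is already packaged in those two results. First I would set $\lambda_0 := 1/\rho(\cA,\balpha)$, which presupposes $\rho(\cA,\balpha)>0$ (the only genuinely separate point; it holds precisely when $\cA$ is not a family all of whose sufficiently long products vanish, and I would flag this as a standing nondegeneracy assumption). Applying Proposition~\ref{p.10} with $\lambda=\lambda_0$ gives $\rho(\delta_{\lambda_0}^\balpha(\cA),\balpha)=\lambda_0\,\rho(\cA,\balpha)=1$, and then Proposition~\ref{p.20}, applied to the weighted system $(\delta_{\lambda_0}^\balpha(\cA),\balpha)$, converts the weighted value $1$ into the unweighted value $\rho(\delta_{\lambda_0}^\balpha(\cA))=1$. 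This shows that $\lambda_0$ solves equation~\eqref{eq.wu}, establishing existence.

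For uniqueness I would run the same chain of implications backwards. Let $\lambda>0$ be any solution of~\eqref{eq.wu}, i.e.\ $\rho(\delta_\lambda^\balpha(\cA))=1$. Proposition~\ref{p.20}, now in the reverse direction, forces $\rho(\delta_\lambda^\balpha(\cA),\balpha)=1$, and Proposition~\ref{p.10} rewrites the left-hand side as $\lambda\,\rho(\cA,\balpha)$. Hence $\lambda\,\rho(\cA,\balpha)=1$, which pins down $\lambda=1/\rho(\cA,\balpha)=\lambda_0$. Equivalently, the map $\lambda\mapsto\rho(\delta_\lambda^\balpha(\cA),\balpha)=\lambda\,\rho(\cA,\balpha)$ is strictly increasing and crosses the value $1$ exactly once, while by Proposition~\ref{p.20} the unweighted quantity appearing in~\eqref{eq.wu} equals $1$ at precisely the same $\lambda$. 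Reading off $\rho(\cA,\balpha)=\lambda^{-1}$ from $\lambda_0=1/\rho(\cA,\balpha)$ then completes the argument.

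The step I expect to require the most care is not any inequality but the bookkeeping of which $\rho$ is weighted and which is unweighted: equation~\eqref{eq.wu} involves the classical unit-weight spectral radius of the dilated family $\delta_\lambda^\balpha(\cA)$, whereas Proposition~\ref{p.10} is a statement about the weighted radius, and the bridge between the two is exactly Proposition~\ref{p.20}. The only analytic input hiding behind the scenes is the positivity $\rho(\cA,\balpha)>0$ that guarantees $\lambda_0$ is well defined and finite; once that is granted, the theorem becomes a two-line consequence of the preceding propositions. If one prefers to phrase the monotonicity intrinsically rather than through the explicit formula $\lambda\,\rho(\cA,\balpha)$, the dilation semigroup identity $\delta_\lambda^\balpha\circ\delta_\mu^\balpha=\delta_{\lambda\mu}^\balpha$ can be invoked instead.
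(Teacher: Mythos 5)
Your argument is exactly the paper's intended proof: the authors state Theorem~\ref{th.10} as an immediate consequence of combining Propositions~\ref{p.10} and~\ref{p.20}, and your existence-plus-uniqueness chain is the natural expansion of that one-line derivation. Your remark that $\rho(\cA,\balpha)>0$ is needed for $\lambda_0$ to be well defined (and for \eqref{eq.wu} to have any solution at all) is a valid nondegeneracy caveat that the paper leaves implicit.
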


Equality~(\ref{eq.wu}) expresses implicitly the weighted joint spectral radius in terms of the 
joint spectral radius. The left hand side of~(\ref{eq.wu}) is an increasing 
function in $\lambda$, hence the root of this equation can be found merely by bisection in~$\lambda$. 
This, however, requires several computations of the joint spectral radius 
(of the family $\delta_\lambda^\balpha(\cA)$ 
for different values of $\lambda$). 
Therefore, it would be more efficient to compute the weighted joint spectral radius directly. The invariant polytope algorithm gives this opportunity. This issue is addressed at the end of this section.

Theorem~\ref{th.10} allows us to adapt many notions and results 
on the joint spectral radius to the weighted joint spectral radius. 
\begin{defi}\label{d.30}
A weighted system $(\cA,\balpha)$ is said to be 
\begin{itemize}
\item \emph{non-defective} if there exists a constant $C>0$ such that
\begin{equation}\label{eq.non-def}
\max_{A(j) \in \cA} \, \|A(k)\cdots A(1)\| \quad \le \quad C \, \rho^{\, \alpha(k) + 
\cdots + \alpha(1)},\qquad k \in \n\, ,  
\end{equation}
where $\rho = \rho(\cA, \balpha)$;
\item 
\emph{irreducible} if $\cA$ is irreducible, i.e., 
there exist no proper subspace $V\subset\mathbb{R}^d$ such that $A_jV\subset V$ 
for every $A_j\in\cA$.
\end{itemize}
\end{defi}

\begin{prop}\label{p.30}
Let 
$(\cA,\balpha)$ be a weighted system 
and $\rho = \rho(\cA, \balpha)$. 
If $
\delta_{1/\rho}^{\balpha}(\cA)$ 
is non-defective, 
then $(\cA,\balpha)$ also is.
\end{prop}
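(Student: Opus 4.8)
The plan is to exploit the fact that the dilation $\delta_{1/\rho}^{\balpha}$ multiplies each matrix $A_j$ by the scalar $\rho^{-\alpha_j}$, so that an entire product picks up a single scalar factor whose exponent is exactly the total weight $\alpha(k) + \cdots + \alpha(1)$ appearing in the definition of non-defectiveness. This is what will make the non-defectiveness of $\delta_{1/\rho}^{\balpha}(\cA)$ and of $(\cA, \balpha)$ equivalent rather than merely comparable.

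First I would set $\cB = \delta_{1/\rho}^{\balpha}(\cA) = \{B_1, \ldots, B_m\}$ with $B_j = \rho^{-\alpha_j} A_j$, and record that, by Proposition~\ref{p.10}, $\rho(\cB, \balpha) = \rho^{-1}\rho(\cA, \balpha) = 1$. Consequently the hypothesis that $(\cB, \balpha)$ is non-defective says precisely that there is a constant $C > 0$ with $\|B(k)\cdots B(1)\| \le C$ for every product of length $k$ and every $k \in \n$, since $1$ raised to any total weight equals $1$; this collapse of the right-hand side to a plain constant is the key simplification.

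Next I would compute the product of the dilated matrices. Because the scalars commute with the matrices,
\[
B(k)\cdots B(1) = \rho^{-(\alpha(k) + \cdots + \alpha(1))}\, A(k) \cdots A(1),
\]
so that $\|B(k)\cdots B(1)\| = \rho^{-(\alpha(k) + \cdots + \alpha(1))}\,\|A(k)\cdots A(1)\|$. Substituting this into the bound $\|B(k)\cdots B(1)\| \le C$ and multiplying through by $\rho^{\alpha(k) + \cdots + \alpha(1)}$ yields
\[
\|A(k)\cdots A(1)\| \le C\, \rho^{\alpha(k) + \cdots + \alpha(1)}
\]
for every product and every $k \in \n$, which is exactly the non-defectiveness inequality \eqref{eq.non-def} for $(\cA, \balpha)$ with the same constant $C$, where $\rho = \rho(\cA, \balpha)$.

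There is essentially no obstacle in this argument: the entire content is the bookkeeping that the dilation factor $\rho^{-(\alpha(k)+\cdots+\alpha(1))}$ matches the weight-sum exponent in \eqref{eq.non-def} exactly. The only point requiring care is that the hypothesis must be used with the correct value $\rho(\cB, \balpha) = 1$, so that the defining inequality for $(\cB,\balpha)$ reduces to a uniform bound by a constant; this is what Proposition~\ref{p.10} supplies and is the sole nontrivial input.
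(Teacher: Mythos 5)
Your proof is correct and follows essentially the same route as the paper: both pass to $\cA'=\delta_{1/\rho}^{\balpha}(\cA)$, use Proposition~\ref{p.10} to see that its (weighted) joint spectral radius equals $1$ so that non-defectiveness reduces to a uniform bound $\le C$ on all products, and then unwind the scalar factor $\rho^{-(\alpha(k)+\cdots+\alpha(1))}$ to recover \eqref{eq.non-def}. You merely spell out the bookkeeping that the paper leaves implicit in ``from the definition of $\cA'$, one recovers \eqref{eq.non-def}.''
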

\begin{proof}
Let $\cA'=\delta_{1/\rho}^{\balpha}(\cA)$.
Since $\rho(\cA', \balpha)=\rho(\cA')=1$, the non-defectiveness of $\cA'$ reads 
%
 $$
 \max_{A'(j) \in \cA'} \ \|A'(k)\cdots A'(1)\| \quad \le \quad C\, \qquad k\in\mathbb{N}.
 $$
From the definition of $\cA'$, one recovers \eqref{eq.non-def}.
 
 \end{proof}

 As an immediate consequence, since irreducibility implies non-defectiveness for
 discrete-time switching systems, one gets the following.
\begin{cor}\label{c.10}
 If the family of matrices $\cA$ is irreducible, then 
 the weighted system $(\cA,\balpha)$ is non-defective for every weight $\balpha$. 
 \end{cor}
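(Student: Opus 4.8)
The plan is to reduce the statement to the classical fact, recalled just before the corollary, that irreducibility implies non-defectiveness for ordinary discrete-time switching systems, and then to transfer this non-defectiveness back to the weighted system by means of Proposition~\ref{p.30}.

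First I would set $\rho = \rho(\cA, \balpha)$ and introduce the rescaled family $\cA' = \delta_{1/\rho}^{\balpha}(\cA)$, whose $j$-th matrix is $(1/\rho)^{\alpha_j} A_j$, a strictly positive scalar multiple of $A_j$. The key elementary observation is that rescaling each matrix by a positive constant leaves the common invariant subspaces unchanged: a subspace $V \subset \mathbb{R}^d$ satisfies $(1/\rho)^{\alpha_j} A_j V \subset V$ for all $j$ if and only if $A_j V \subset V$ for all $j$. Consequently $\cA'$ is irreducible precisely because $\cA$ is. By the quoted classical result for discrete-time switching systems, irreducibility of $\cA'$ yields that $\cA'$ is non-defective in the ordinary sense, i.e., $\max_{A'(j)\in\cA'}\|A'(k)\cdots A'(1)\| \le C\,\rho(\cA')^{k}$ for all $k$.

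To close the argument I would note that, by Proposition~\ref{p.10}, $\rho(\cA') = \rho(\cA',\balpha) = 1$, so the classical non-defectiveness of $\cA'$ reduces to the uniform bound $\max_{A'(j)\in\cA'}\|A'(k)\cdots A'(1)\| \le C$, which is exactly the weighted non-defectiveness of $\cA'$ (since the weighted bound \eqref{eq.non-def} for $\cA'$ also has right-hand side $C\cdot 1^{\alpha(k)+\cdots+\alpha(1)} = C$). This is precisely the hypothesis needed in Proposition~\ref{p.30}, whose conclusion then gives the non-defectiveness of $(\cA, \balpha)$, as required. I do not expect any genuine obstacle here; the only point requiring a word of care is the invariance of irreducibility under the dilation $\delta_{1/\rho}^{\balpha}$, which is immediate because each matrix is scaled by a positive constant and such scaling preserves the lattice of common invariant subspaces. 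Everything else is a direct chaining of Propositions~\ref{p.10} and~\ref{p.30} with the cited classical fact.
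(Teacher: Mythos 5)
Your proposal is correct and follows essentially the same route as the paper, which derives the corollary as an immediate consequence of Proposition~\ref{p.30} combined with the classical fact that irreducibility implies non-defectiveness for discrete-time systems; you merely make explicit the (correct) observations that positive scalar rescaling preserves irreducibility and that, since $\rho(\cA')=\rho(\cA',\balpha)=1$, classical and weighted non-defectiveness of $\cA'$ coincide. The only cosmetic remark is that the identity $\rho(\cA')=1$ uses Proposition~\ref{p.20} in addition to Proposition~\ref{p.10}.
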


\begin{remark}
The proof of
Proposition~\ref{p.30} is based on the remark that 
non-defectiveness of $(\cA,\balpha)$ is independent of the weight $\balpha$ if $\rho(\cA)=1$. 
Corollary~\ref{c.10} 
identifies another class of families of matrices for which non-defectiveness is independent of the weight.  
 The property is however false for a general family $\cA$,
 as illustrated by the following example. 
 
 Consider 
 $\cA=\{A_1,A_2\}$ with
 \[A_1=\begin{pmatrix} 3&1\\ 0&3\end{pmatrix},\qquad A_2=\begin{pmatrix} 2&0\\ 0&2\end{pmatrix}.\]
Notice that the two matrices commute. For a weight 
$\balpha$, a positive integer $k$,  and $A(j) \in \cA$, $j = 1, \ldots , k$, one has that 
 \begin{equation}\label{eq:ND}
  \bigl\|A(k)\cdots A(1)
\bigr\|^{\ \frac{1}{\alpha (k)+ \cdots + \alpha(1)}}=\Big(2^m3^{k-m}
\left\| \begin{pmatrix} 1&\frac{k-m}3\\ 0&1\end{pmatrix} \right\|\Big)^{\frac1{m\alpha_2+(k-m)\alpha_1}},
\end{equation}
 for some integer $m\leq k$.
 The right-hand side of \eqref{eq:ND} can be rewritten as 
 $e^{f_{\balpha}(x)+\zeta(k)}$,
  where $x=m/k\in[0,1]$, $f_{\balpha}$ is defined by 
$$
 f_{\balpha}(x)=\frac1{\alpha_1+x(\alpha_2-\alpha_1)}
 \left(\ln(3)+\ln\left(\frac23\right)x \right),
 $$
 and $\zeta(k)\to 0$ as $k\to\infty$. 
A simple computation shows that 
the maximum of 
$f_{\balpha}(\cdot)$ is reached at $x=0$ if $\balpha=(1,1)$ and at $x=1$ if
$\balpha=(2,1)$. 
As a consequence, 
$\rho(\cA,(1,1))=3$ and  $\rho(\cA,(2,1))=2$.
Moreover, the maximum in \eqref{eq:ND}   goes as $\|A_1^k\|^{\frac1 k}$ as $k\to \infty$ if $\balpha=(1,1)$  and
as $\|A_2^k\|^{\frac1 k}$ if $\balpha=(2,1)$. 
Hence, $(\cA,(1,1))$ is defective since $3^{-k}A_1^k$ is not bounded as $k$ tends to infinity, while 
$(\cA,(2,1))$ is non-defective since $2^{-k}A_2^k$ does not depend on $k\geq 1$.
%
  \end{remark}

 The following theorem extends the main facts on extremal and invariant norms
 from classical 
 discrete-time switching systems to weighted systems. 
\begin{theorem}\label{th.20}
\textbf{a)} For a weighted system $(\cA, \balpha)$ and $\lambda>0$, we have $\rho(\cA, \balpha) < \lambda$ if  and only if there exists a norm in $\mathbb{R}^d$ such that,  in the 
corresponding operator norm, we have $\|A_j\| \, < \, \lambda^{\, \alpha_j}, \, 
j  = 1, \ldots , m$; 
\smallskip 

\textbf{b)} If a weighted system is non-defective, then 
it possesses an \textbf{extremal norm}, for which 
$$
\max_{j= 1, \ldots , m}\|\rho^{-\alpha_j}A_j\| \quad \le \quad 1, 
$$
 where $\rho = \rho(\cA, \balpha)$;  
\smallskip

\textbf{c)}  If a weighted system is irreducible, then 
it possesses an \textbf{invariant norm}, for which 
$$
\max_{j= 1, \ldots , m}\|\rho^{-\alpha_j}A_j x\| \quad = \quad  \|x\|, \qquad  x \in \re^d, 
$$
where $\rho = \rho(\cA, \balpha)$. 
\end{theorem}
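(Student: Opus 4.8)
The plan is to reduce each of the three statements to the corresponding classical result for the (unweighted) joint spectral radius by passing through the dilation $\delta_\lambda^\balpha$, which turns a weighted system into a classical discrete-time one whose spectral radius can be normalized to $1$. The key bookkeeping observation, used throughout, is that the products of the scaled family $\delta_{1/\rho}^\balpha(\cA)$ are exactly $\rho^{-(\alpha(k)+\cdots+\alpha(1))}A(k)\cdots A(1)$, so that its matrices are $\rho^{-\alpha_j}A_j$; hence any extremal or invariant norm for the scaled family is, verbatim, a norm of the kind demanded in (b) and (c). For part (a) I set $\cB=\delta_{1/\lambda}^\balpha(\cA)$, so that $\rho(\cB,\balpha)=\lambda^{-1}\rho(\cA,\balpha)$ by Proposition~\ref{p.10}; thus $\rho(\cA,\balpha)<\lambda$ is equivalent to $\rho(\cB,\balpha)<1$, and by Proposition~\ref{p.20} to $\rho(\cB)<1$. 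Invoking the classical fact that $\rho(\cB)<1$ holds if and only if some norm makes every $B_j$ a strict contraction, i.e.\ $\|B_j\|<1$, and substituting $B_j=\lambda^{-\alpha_j}A_j$, gives exactly $\|A_j\|<\lambda^{\alpha_j}$.

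For parts (b) and (c) I normalize by $\rho=\rho(\cA,\balpha)$ and work with $\cA'=\delta_{1/\rho}^\balpha(\cA)$, for which $\rho(\cA')=\rho(\cA',\balpha)=1$ by Theorem~\ref{th.10}. As in the proof of Proposition~\ref{p.30}, non-defectiveness of $(\cA,\balpha)$ is equivalent to uniform boundedness of all products of $\cA'$, that is, to classical non-defectiveness of $\cA'$. I then apply the standard construction of an extremal norm for a non-defective family, for instance
\[ \|x\| \ := \ \sup\bigl\{\, \|A'(k)\cdots A'(1)x\|_0 \ : \ k\in\n,\ A'(j)\in\cA' \,\bigr\}, \]
where $\|\cdot\|_0$ is any fixed norm; this supremum is finite by non-defectiveness, dominates $\|\cdot\|_0$ and is therefore a genuine norm, and satisfies $\|A'_j x\|\le\|x\|$. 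Rewriting $A'_j=\rho^{-\alpha_j}A_j$ yields $\|\rho^{-\alpha_j}A_j\|\le1$, which is (b). For (c) I observe that irreducibility of $(\cA,\balpha)$ means irreducibility of $\cA$, a property unchanged by multiplying each matrix by a positive scalar, so $\cA'$ is irreducible as well; Barabanov's theorem then provides a norm with $\max_j\|A'_j x\|=\|x\|$ for all $x\in\re^d$, which after the substitution $A'_j=\rho^{-\alpha_j}A_j$ becomes $\max_j\|\rho^{-\alpha_j}A_j x\|=\|x\|$.

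The only point that really needs care --- the rest being an application of established theorems --- is checking that the dilation transports the hypotheses correctly: that non-defectiveness of $(\cA,\balpha)$ coincides with classical non-defectiveness of $\cA'$, which is handled exactly as in Proposition~\ref{p.30}, and that irreducibility is scale-invariant, which is immediate. Once these transfers are secured, all three conclusions follow from the classical theorems on contracting, extremal, and Barabanov norms, since the weights have already been absorbed into the scaling factors $\lambda^{-\alpha_j}$ and $\rho^{-\alpha_j}$ before any classical result is invoked.
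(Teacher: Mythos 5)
Your proposal is correct and follows essentially the same route as the paper: both reduce to the classical unweighted theory by applying the dilation $\delta_{1/\lambda}^{\balpha}$ (resp.\ $\delta_{1/\rho}^{\balpha}$) so that the weights are absorbed into the scaling factors, and then invoke the classical results on contracting, extremal, and Barabanov norms. The only cosmetic difference is that the paper proves the easy direction of (a) directly by submultiplicativity, whereas you route both directions through the same scaling equivalence.
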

\begin{proof}
\textbf{a)} If $\|A_j\| < \lambda^{\alpha_j}$ for all 
$A_j \in \cA$, then this inequality still holds after 
replacing $\lambda$ by $\lambda-\varepsilon$, whenever $\varepsilon > 0$
is small enough. Using submultiplicativity of the matrix norm, we obtain 
for all matrix products: 
$\|A(k)\cdots A(1)\| <  (\lambda-\varepsilon)^{\, \alpha(k) + \cdots + 
\alpha(1)}$. Hence, $\rho(\cA, \balpha)\le \lambda - \varepsilon < \lambda$.
Conversely, if $\rho(\cA, \balpha)< \lambda$, then 
 the family $\cA' =\delta^{\balpha}_{\lambda}(\cA)$ has joint spectral radius smaller than one. 
Hence, there is a norm in $\re^d$ such that $\|A'\|< 1$ for every $A'\in \cA'$. Therefore, $\|A_j\| < \lambda^{\alpha_j}$ for $j=1,\dots,m$. 

In the same way we prove the existence of the extremal and invariant norms in items
 \textbf{b)} and \textbf{c)} 
merely by passing to the system $\delta_{\rho^{-1}}^\alpha(\cA)$,
whose joint 
spectral radius is equal to one and by applying the classical existence results 
of  extremal and invariant norms of usual discrete-time switching systems.
%

\end{proof}
\subsection{Numerical aspects}

\subsubsection{The algorithm of Gripenberg adapted for weighted systems}
 
 Theorem~\ref{th.20} allows one to compute the weighted joint spectral radius by 
constructing the corresponding norm in~$\re^d$. We begin with 
the branch-and-bound algorithm of Gripenberg~\cite{G} for the approximate 
computation of the joint spectral radius and then consider the invariant polytope algorithm for its exact computation. 
To generalize Gripenberg's algorithm to weighted systems we 
need an extension of Item~\textbf{a)} of Theorem~\ref{th.20} to cut sets 
of matrix products (defined below). 

Consider the tree~$\bT$ of matrix products. 
The root is the identity matrix~$\mathrm{Id}$. It has $m$ children $A_j, \, j = 1, \ldots , m$. 
They form the first level of the tree. The further levels are constructed by induction. 
Every vertex (product) in the $k$th level $\Pi = A(k)\cdots A(1)$
has $m$ children $A_j\Pi$, $j = 1, \dots , m$, in the $(k+1)$-th level. 
For a vertex $\Pi = A(k)\cdots A(1)$,  we denote 
\[
|\Pi| = \alpha(k) + \cdots + \alpha(1).
\] 

A finite set of vertices on positive levels is called a {\em cut set} if    
it intersects every infinite path starting at the root (all paths are without backtracking). It is shown easily that for every cut set $\cS$, each infinite product
of matrices from $\cA$ 
is an infinite product of vertices from $\cS$. 

\begin{prop}\label{p.40}
If the tree~$\bT$ of a weighted system $(\cA, \balpha)$
possesses a cut set~$\cS$ such that 
for every vertex $\Pi \in \cS$, we have 
$\|\Pi\|\, < \, \lambda^{\, |\Pi|}\, $ for some $\lambda>0$, then $\, \rho(\cA, \balpha) < \lambda$. 
\end{prop}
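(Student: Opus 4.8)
The plan is to use the cut set $\cS$ to decompose an arbitrary matrix product into blocks lying in $\cS$, each of which contracts by the factor $\lambda^{|\Pi|}$, plus a bounded leftover factor, and then to let the length tend to infinity.

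First I would extract the quantitative content of the hypothesis. Since $\cS$ is finite and $\|\Pi\|<\lambda^{|\Pi|}$ for every $\Pi\in\cS$, the number
\[
q \ = \ \max_{\Pi\in\cS}\frac{\|\Pi\|}{\lambda^{|\Pi|}}
\]
satisfies $q<1$, so that $\|\Pi\|\le q\,\lambda^{|\Pi|}$ for all $\Pi\in\cS$; let also $M=\max_{\Pi\in\cS}|\Pi|<\infty$. Next I would fix a uniform bound on the ``not-yet-crossed'' part. Consider the set $T$ of vertices of $\bT$ none of whose prefixes (the vertex itself included) belongs to $\cS$. This $T$ is prefix-closed, and if it were infinite then, the tree being $m$-ary, K\"onig's lemma would produce an infinite path in $T$, i.e.\ an infinite path avoiding $\cS$, contradicting the cut-set property (this is the finite counterpart of the decomposition fact already noted before the statement). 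Hence $T$ is finite; let $N$ be the largest number of matrices occurring in a vertex of $T$, and set $W=N\alpha_{\max}$, an upper bound for the weight of any vertex of $T$.

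Then I would set up the decomposition. Given an arbitrary product $\Pi=A(k)\cdots A(1)$, read from the right (so $A(1)$ is applied first) and repeatedly peel off the shortest prefix lying in $\cS$: the first such prefix $\Pi_1\in\cS$ exists because the path cannot stay in $T$ beyond $N$ matrices; restarting from the next factor yields $\Pi_2\in\cS$, and so on. This produces
\[
\Pi \ = \ R\,\Pi_s\cdots\Pi_2\,\Pi_1,\qquad \Pi_1,\dots,\Pi_s\in\cS,
\]
where the leftover factor $R$ corresponds to a vertex of $T$ and hence involves at most $N$ matrices, so $|R|\le W$ and $\|R\|\le C_1:=\max\{1,\max_j\|A_j\|\}^{N}$. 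Submultiplicativity together with $\|\Pi_i\|\le q\,\lambda^{|\Pi_i|}$ then gives
\[
\|\Pi\| \ \le \ \|R\|\prod_{i=1}^{s}\|\Pi_i\| \ \le \ C_1\,q^{\,s}\,\lambda^{\,\sum_i|\Pi_i|} \ = \ C_1\,q^{\,s}\,\lambda^{\,|\Pi|-|R|}.
\]
Since each block satisfies $|\Pi_i|\le M$, we have $sM\ge\sum_i|\Pi_i|=|\Pi|-|R|\ge|\Pi|-W$, hence $s\ge(|\Pi|-W)/M$. Raising to the power $1/|\Pi|$ and using $C_1^{1/|\Pi|}\to1$, $|R|\le W$ and $q<1$, I would obtain a bound of the form $\|\Pi\|^{1/|\Pi|}\le\lambda\,\phi(|\Pi|)$ with $\phi(t)\to q^{1/M}$ as $t\to\infty$, depending only on the total weight $|\Pi|=t$ and therefore uniform over all products. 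As the combinatorial length $k\to\infty$ the total weight $|\Pi|\ge k\alpha_{\min}$ tends to infinity as well, so taking the maximum over products of length $k$ and then the limsup in Definition~\ref{d.20} yields $\rho(\cA,\balpha)\le\lambda\,q^{1/M}<\lambda$, as claimed.

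I expect the only delicate point to be the uniform control of the leftover factor: one must guarantee, via the K\"onig-type argument above, that the greedy peeling always terminates within a bounded number $N$ of matrices independently of the product, so that $\|R\|$ and $|R|$ stay bounded while the block count $s$ grows linearly in $|\Pi|$. Once this uniformity is secured, the remainder is routine submultiplicativity and a passage to the limit.
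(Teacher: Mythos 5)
Your proof is correct and follows essentially the same route as the paper's: decompose a sufficiently long product into cut-set blocks plus a uniformly bounded leftover factor, apply submultiplicativity, and pass to the limit. The only cosmetic difference is that the paper absorbs the strict inequalities into a shift $\lambda\mapsto\lambda-\varepsilon$ (valid since $\cS$ is finite), whereas you keep $\lambda$ and instead track a per-block contraction factor $q<1$ together with a lower bound on the number of blocks; both yield $\rho(\cA,\balpha)<\lambda$.
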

\begin{proof}
The inequalities $\|\Pi\|\, < \, \lambda^{\, |\Pi|}\, $  still hold after 
replacing $\lambda$ by $\lambda-\varepsilon$, whenever $\varepsilon > 0$
is small enough. Let $n$ be the maximal level of vertices from~$\cS$. 
Then every  product $\Pi_N$ of length $N>n$ can be presented as a product 
$\Pi$ of several vertices from $\cS$ times some product  $\Pi_r$ of length $r < n$. 
Using submultiplicativity of the matrix norm, we obtain 
$\|\Pi\| \, <  \, (\lambda-\varepsilon)^{|\Pi|}$. 
Therefore, 
$$
\|\Pi_N\| \ \le \ (\lambda-\varepsilon)^{|\Pi|}\|\Pi_r\|\ = 
 \ (\lambda-\varepsilon)^{|\Pi_N|}\|\Pi_r\|\, (\lambda - \varepsilon)^{\, - |\Pi_r|}\ \le \  C\, (\lambda-\varepsilon)^{|\Pi_N|},
$$
where $C$ is the maximum of numbers $\|\Pi_r\|\, (\lambda - \varepsilon)^{\, - |\Pi_r|}$ over all products $\Pi_r$ of length $\le n$. 
Since this holds for all long  products $\Pi_N$, 
we conclude that $\rho(\cA, \balpha) \le \lambda - \varepsilon < \lambda$. 
\end{proof}


We now provide details of the algorithm.
We choose a small $\varepsilon> 0$ and define the starting value of $\lambda$ as
$\lambda = \max \{ [\rho(A_i)]^{1/ \alpha_i} \mid i = 1,\ldots , m\}$,
where $\rho(A)$ denotes the spectral radius of the matrix $A$. 
Then we go through the  tree $\bT$ starting from the first level.

For every vertex 
   $\Pi = A(k) \cdots A(1)$ on $\bT$, we compute
$\|\Pi\|^{1/|\Pi|}$ and if it  is smaller than $\lambda + \varepsilon$, then we remove from $\bT$ the vertex $\Pi$ together with the whole branch starting from it. 
This vertex is said to be dead and it does not produce children. Otherwise, we 
keep $\Pi$ 
and we go to the next vertex. 

If the value $\bigl[\rho\bigl(\Pi\bigr)\bigr]^{1/ |\Pi|}$ is bigger than $\lambda$, then we replace $\lambda$ by this value and continue. 
Otherwise, $\lambda$ stays the same. 

The algorithm terminates when there are no new alive vertices. 
Then we have $\lambda \le  \rho(\cA,\balpha) \le \lambda + \varepsilon$. 
For the last alive vertex-product $\Pi_n$, we have 
$$
\bigl[\rho\bigl(\Pi_n\bigr)\bigr]^{1/ |\Pi_n|} 
\quad  
\le \quad \rho(\cA,\balpha) \quad  \le \quad \bigl\|\Pi_n\bigr\|^{1/ |\Pi_n|}
\quad  + \quad 
\varepsilon \, . 
$$

%

\subsubsection{The invariant polytope algorithm}

The algorithm tries to find a s.m.p. ({\em spectral maximizing product}), i.e., 
a product $\Pi = A(k)\cdots A(1)$ such that 
$\rho(\cA,\balpha)\, = \, \bigl[ \rho(\Pi) \bigr]^{1/|\Pi|}$.
If this is done, then 
the weighted joint spectral radius is found. For discrete-time switching systems,
numerical experiments 
demonstrate~\cite{GP13, GP16, Mej} that for a vast majority of matrix families, 
a s.m.p. exists and the invariant polytope 
algorithm finds one.

The first step is to fix some integer 
$\ell$ and find a simple product
(i.e., a product which is not a power of a shorter product)~$\Pi = A(n)\cdots A(1)$ with the maximal value $[\rho(\Pi_n)]^{\frac{1}{|\Pi_n|}}$ among all 
products~$\Pi_n$
of lengths $n \le \ell$. We denote this value by $\rho_{c}$ and 
 call this product a {\em candidate for s.m.p.}. Next, we try 
  to prove that it is a real
s.m.p.. We normalize all the matrices $A_i$ as $\tilde A_i = \rho_c^{-\alpha_i}A_i$.
Thus we obtain the 
system~$(\tilde A, \balpha ) $ and the product $\tilde \Pi = 
\tilde A(n)\cdots \tilde A(1)$ such that $\rho(\tilde \Pi) = 1$. 
We are going to check whether  $\rho(\tilde \cA , \balpha) \le 1$. 
If this is the case, then $\rho(\tilde \cA , \balpha) = 1$. By Theorem~\ref{th.10},
we equivalently need to show that $\rho(\tilde \cA) \le 1$. 
This can be done by presenting  a polytope~$P \subset \re^d$
such that $\tilde A_i P \subset P$ for all $i = 1, \ldots , m$. 
The construction is provided in~\cite{GP13}. If the algorithm terminates within finite time, 
then it produces the desired polytope~$P$. Otherwise, we need to 
look for a different candidate for s.m.p..
\smallskip 

The criterion for terminating the algorithm  in a finite number of steps
 uses the notion of dominant product which is
a strengthening of the s.m.p.~property. A product $\Pi = A(n)\cdots A(1)$ 
is called {\em dominant} for the weighted family $(\cA,\balpha)$
if there exists a constant $\gamma < 1$ such that the spectral radius of each product of matrices
from the normalized family $\tilde \cA 
$ which is neither a power of~$\tilde \Pi$ nor that of its cyclic permutation is smaller than~$\gamma$. A dominant product is an s.m.p., but, in general, the converse is not true. 

\smallskip

\begin{theorem}\label{th.30} For a given weighted system  and for a given initial product~$\Pi$,
the invariant polytope algorithm (Algorithm~\ref{algo:IPA}) terminates within finite time if and only if~$\Pi$ is dominant and its
leading eigenvalue is unique and simple.
\end{theorem}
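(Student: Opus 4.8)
The plan is to reduce the statement to the known finite-termination criterion for the invariant polytope algorithm in the classical (unit-weight) setting, established in \cite{GP13, GP16, Mej}. The crucial observation is that, once the candidate product $\Pi$ is fixed and the value $\rho_c = [\rho(\Pi)]^{1/|\Pi|}$ is computed, the entire procedure operates only on the \emph{normalized} matrices $\tilde A_i = \rho_c^{-\alpha_i} A_i$, and the weights $\balpha$ leave no further trace. Indeed, the normalized family is precisely the dilation $\tilde\cA = \delta_{\rho_c^{-1}}^{\balpha}(\cA)$, so by Theorem~\ref{th.10} the condition $\rho(\tilde\cA, \balpha) \le 1$ that the algorithm seeks to certify is equivalent to $\rho(\tilde\cA) \le 1$. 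Since the invariant-polytope condition $\tilde A_i P \subset P$ involves the matrices $\tilde A_i$ alone, the weighted algorithm coincides verbatim with the classical invariant polytope algorithm run on $\tilde\cA$ with candidate product $\tilde\Pi = \tilde A(n)\cdots\tilde A(1)$.

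First I would record the elementary identity $\tilde\Pi = \rho_c^{-|\Pi|}\, \Pi$, so that $\rho(\tilde\Pi) = \rho_c^{-|\Pi|}\rho(\Pi) = 1$ and, more importantly, $\tilde\Pi$ and $\Pi$ share the same eigenvectors while their eigenvalues differ only by the positive factor $\rho_c^{-|\Pi|}$. Consequently, $\Pi$ has a unique leading eigenvalue (a single eigenvalue of maximal modulus) that is simple if and only if $\tilde\Pi$ does, because multiplication by a positive scalar preserves both the uniqueness of the eigenvalue of maximal modulus and its algebraic simplicity.

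Next I would match the two notions of dominance. The weighted dominance of $\Pi$ for $(\cA,\balpha)$ is defined directly through the spectral radii of products in the normalized family $\tilde\cA$: it requires a constant $\gamma < 1$ with $\rho(\tilde\Pi') < \gamma$ for every product $\tilde\Pi'$ of matrices from $\tilde\cA$ that is neither a power of $\tilde\Pi$ nor of a cyclic permutation of $\tilde\Pi$. This is exactly the classical dominance of $\tilde\Pi$ for the family $\tilde\cA$, so no further translation is needed beyond the identification $\tilde\cA = \delta_{\rho_c^{-1}}^{\balpha}(\cA)$.

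With these equivalences in hand, the conclusion is immediate: applying the classical finite-termination theorem \cite{GP13, GP16, Mej} to $\tilde\cA$ and candidate $\tilde\Pi$ shows that the (classical, hence weighted) algorithm halts in finitely many steps if and only if $\tilde\Pi$ is dominant with a unique simple leading eigenvalue, which by the previous two paragraphs is equivalent to $\Pi$ being dominant with a unique simple leading eigenvalue for $(\cA,\balpha)$; since every link in this chain is an equivalence, the desired \emph{if and only if} follows. The main point to verify carefully---and the only genuine obstacle---is the first claim, namely that the weighted algorithm introduces no dependence on $\balpha$ beyond the normalization step. This amounts to inspecting each stage of Algorithm~\ref{algo:IPA} (candidate normalization, polytope growth, and the termination test) and confirming that, after normalization, every operation is phrased solely in terms of the matrices $\tilde A_i$ and the polytope $P$, exactly as in the unweighted construction of \cite{GP13}.
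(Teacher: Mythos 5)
Your proposal is correct and takes essentially the same route as the paper, which simply asserts that the proof is ``similar to the corresponding theorem for usual discrete-time systems'' in \cite{GP13} and omits it: the content of that assertion is precisely your observation that after the normalization $\tilde A_i = \rho_c^{-\alpha_i}A_i$ the weighted algorithm coincides verbatim with the classical invariant polytope algorithm on $\tilde{\cA}$, and that the weighted notion of dominance is by definition classical dominance of $\tilde\Pi$ in $\tilde{\cA}$ while the unique-simple-leading-eigenvalue condition is preserved under the positive scalar relation $\tilde\Pi=\rho_c^{-|\Pi|}\Pi$. If anything, your write-up is more explicit than the paper's, since it turns the informal ``similar proof'' remark into a genuine black-box reduction to the classical termination theorem.
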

The proof of the theorem is similar to the proof of the corresponding theorem for
 usual discrete-time systems~\cite{GP13} and we omit it. 

The algorithm follows (here absco denotes the absolutely convex hull of a set).

\begin{algorithm}\label{algo:IPA}
\KwData{$\cA = \{ A_1,\ldots,A_m \}, \ \balpha=\{\alpha_1,\ldots,\alpha_m \}, \ k_{\max} > 0$ ($k_{\max}$ may be very large),
and a candidate spectrum maximizing product $\Pi$}
\KwResult{The invariant polytope $P$, spectrum maximizing products, the weighted joint spectral
radius~$\rho(\cA,\balpha)$}
\Begin{
\nl Set $\rho_c := \rho(\Pi)^{1/|\Pi|}$ and $\tilde{\cA} :=\delta_{\rho_c^{-1}}^\balpha(\cA)$\;
\nl Compute $v_0$, leading eigenvector of $\Pi$\;
\nl Set $k=1$\;
\nl Set $E=0$ and $\cV_{0} = \{ v_0 \}$\;
\nl \While{$E=0$ \ {\rm and} \ $k \le k_{\max}$}{
\nl Set $\cV_{k} = \cV_{k-1}, \, \cR_{k} = \emptyset$\;
\nl \For{$v \in \cR_{k-1}, \, \mbox{{\rm \textbf{and for}}} \ i = 1, \ldots , m$}{
\nl \eIf{$\tilde A_iv \in {\rm int} ({\rm absco}(\cV_{k}))$}{
\nl Leave $\cV_k, \cR_k$ as they are\;}{
\nl Set $\cV_{k} := \cV_{k}\cup \{\tilde A_i v\}, \, \cR_{k} := \cR_{k}\cup \{\tilde A_i v\}$\;}}
\nl \eIf{$\cR_{k} = \emptyset$}{
\nl Set $E=1$ (the algorithm halts) \; }{
\nl Set $k:= k+1$ \;}
}
\nl \eIf{$E=1$}{
\nl \Return{$P := {\rm absco}(\cV_k) \ $ is an invariant polytope;\\
$\hspace{14mm}  \Pi$ is a s.m.p.;\\
$\hspace{14mm} \rho(\cA,\balpha) = \rho_c$  is the weighted joint spectral radius\;}}
{{\bf print} {\rm Maximum number of iterations reached}\;}
}
\caption{The invariant  polytope algorithm \label{algoP}}
\end{algorithm}

Variants for Algorithm \ref{algoP} can be considered in the case where
there are several spectrum maximizing products.

\begin{ex} \rm
Consider the weighted system $(\cA, \balpha)$ with $\cA = \{ A_1, A_2 \}$ and
$\balpha = \{ \alpha_1, \alpha_2 \}$,
\begin{eqnarray*}
A_1 = \left( \begin{array}{rr} 1 & 1 \\ 0 & 1 \end{array} \right), \qquad
A_2 = \frac45 \left( \begin{array}{rr} 1 & 0 \\ 1 & 1 \end{array} \right).
\end{eqnarray*}
In the usual case when $\alpha_1=\alpha_2=1$ it is well-known that
\[
\rho_0 = \rho(\cA, \{ 1,1 \}) = \rho\left( A_1 A_2 \right)^\frac12 = 
1 + \frac{\sqrt5}{5} = 1.44721\ldots
\]
which implies that $A_1 A_2$ is a 
spectral maximizing product.

However, setting $\alpha_1=1$ and $\alpha_2=2$ we compute the s.m.p. $\Pi = A_1^2 A_2$, that gives $|\Pi|=4$,
\[
\rho_c = \rho(\cA, \{ 1,2 \}) = \rho(\Pi)^\frac14 =  1.314496347291999 := \frac{1}{\lambda}, \lambda = 0.760747644571326. 
\]
This gives the normalized product
\[
\tilde\Pi = \left( \lambda^{\alpha_1} A_1 \right)^2 \lambda^{\alpha_2} A_2 = {\tilde A}_1^2\,{\tilde A}_2
\]
such that $\rho(\tilde\Pi) = 1$, where ${\tilde A}_1 = \lambda^\alpha_1 A_1$ and ${\tilde A}_2 = \lambda^\alpha_2 A_2$.
As expected $\rho_c < \rho_0$.

An extremal norm is computed by the Invariant polytope algorithm 
and corresponds to a polytope with
vertices $\{ \pm v_0,  \pm v_1,  \pm v_2,  \pm v_3,  \pm v_4, \pm  v_5, \pm  v_6 \}$ where
\[
v_0 = \left( \begin{array}{l} 1 \\ 0.366025403784439 \end{array} \right) 
\]
is the leading eigenvector of $\tilde\Pi = {\tilde A}_1^2 {\tilde A}_2$ and
\begin{eqnarray*}
v_1 = {\tilde A}_1\,v_0, \quad v_2 = {\tilde A}_2\,v_0, \quad 
v_3 = {\tilde A}_1\,v_1, \quad v_4 = {\tilde A}_1\,v_2, \quad v_5 = {\tilde A}_1\,v_3, \quad v_6 = {\tilde A}_2\,v_4.
\end{eqnarray*}
\begin{figure}
\begin{center}
\includegraphics[width=0.5\textwidth]{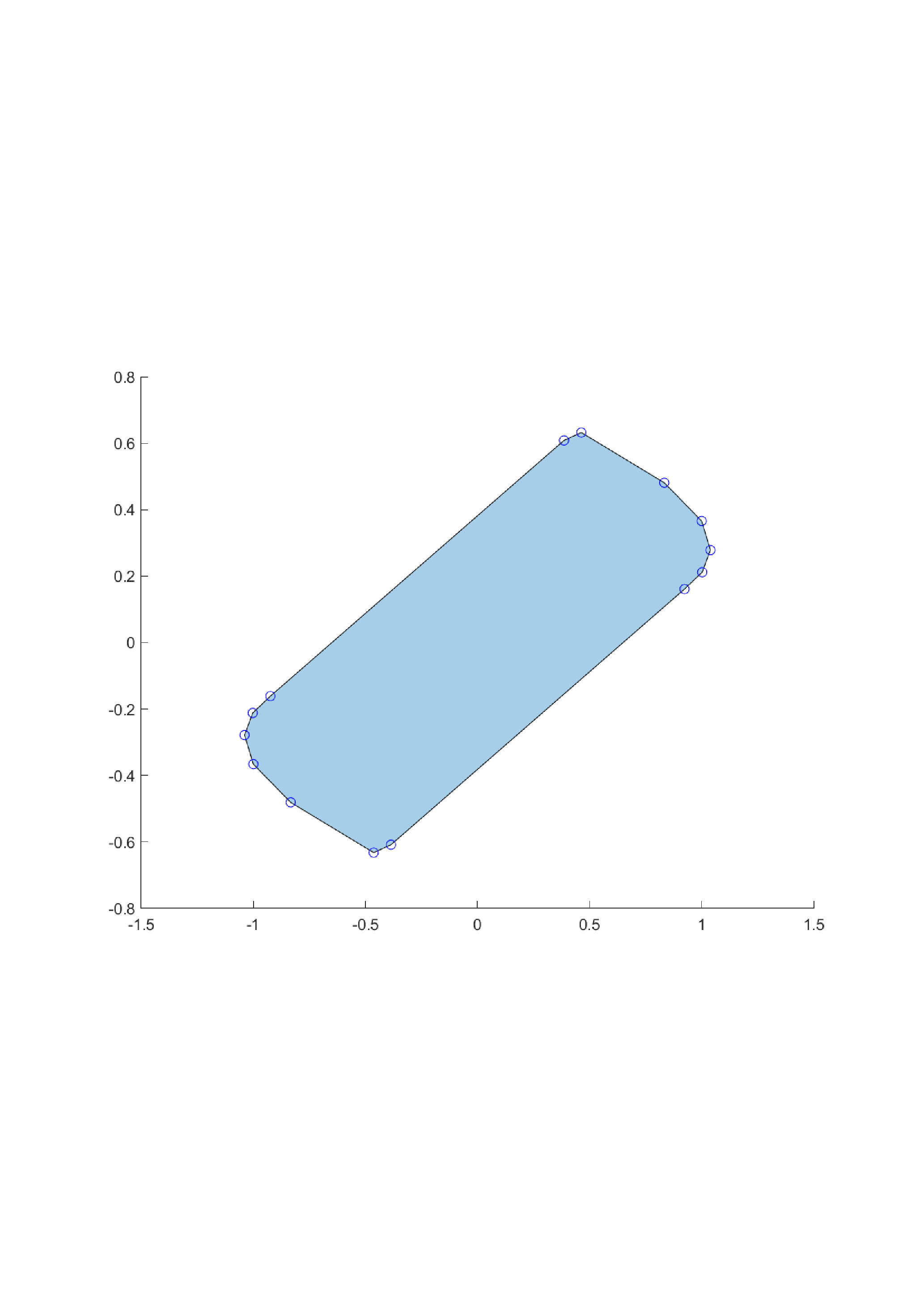}
\caption{The figure shows the extremal polytope norm computed for Example 1.}
\end{center}
\end{figure}
\end{ex}

\section{Mixed (discrete-continuous) systems}\label{sec:3}

\subsection{Theoretical aspects}

If all dwell times of a continuous-time switching system are fixed, then 
the latter is equivalent to a weighted system.
But what if the dwell times of only a part of modes are fixed while the 
other dwell times have no restrictions? In this case our equivalent system 
includes both continuous and discrete part. 
This motives the following construction. 
\smallskip 

Let $\cA = \{A_1, \ldots , A_m\}$ be a finite family of $d\times d$ matrices equipped with 
positive weights $\balpha = \{\alpha_1, \ldots , \alpha_m\}$ 
and let 
$\cB$ be a 
bounded
set of $d\times d$ matrices. For every sequence of matrices 
$\{A(i)\}_{i \in \n}$, from the family $\cA$, $\alpha(i)$ denotes the weight 
of $A(i)$.  
 
\smallskip

\begin{defi}\label{d.100}
The {\em mixed system} associated with the 
 triple 
$(\cA, \balpha , \cB)$ is the linear switching system 
having the following set of trajectories. 
Consider any  sequence 
$\bigl\{\, \bigl(\, (a_i, b_i)\, , A(i)\, 
\bigr)\, \bigr\}_{i\in D}$, 
where each $A(i)$ is a matrix from $\cA$ and $(a_i, b_i) \subset \re_+$ an open interval of length $\alpha(i)$. This sequence may be 
infinite ($D = \n$), finite ($D = \{1, \ldots , n\}$), or empty ($D = \emptyset$). The sequence of intervals increases, i.e., $b_i \le a_{i+1}$ for each $i$. 
The union of those intervals is called a {\em dark domain}, its complement in 
$\re_+$ is called an {\em active domain} and denoted by $\cT$. 
For any measurable function $B: \, \cT \to \cB$, we consider 
the system of differential and difference equations 
\begin{equation}\label{eq.mix}
\left\{
\begin{array}{cclcl}
\dot x(t) &  = & B(t)x(t)\,, && t \in \cT, \\[1mm]
x(b_i)  &  = & A(i)x(a_i)\,, & & i \in D.
\end{array}
\right. 
\end{equation}
Every solution $x: \cT \to \re^d$  of this system is called a {\em trajectory of the mixed system $(\cA, \balpha , \cB)$}, 
whose 
associated {\em switching law} is given by 
the sequence $\bigl\{\, \bigl(\, (a_i, b_i)\, , A(i)\, 
\bigr)\, \bigr\}_{i\in D}$ together with the 
function $B: \cT \to \cB$. We use ${\it sw}$
to denote any such switching law and ${\cal{S}\cal{W}}$ for the set of all switching laws associated with $(\cA, \balpha , \cB)$.
\end{defi}
\smallskip 

Clearly, the classical continuous and discrete-time switching systems are 
special cases of mixed systems.

Every trajectory of a mixed system 
is uniquely 
determined by the switching law ${\it sw}$ and by the initial point $x(0)$.
The trajectory 
has its own active domain $\cT=\cT({\it sw})$, where it is defined. 
Thus, for a mixed system, a trajectory is not a function from $\re_+$ to $\re^d$, 
but a function from a certain closed subset $\cT \subset \re_+$
to $\re^d$.  If $t \notin \cT$, then $x(t)$ is not defined. 
The dark domain $\re_+\setminus \cT$ consists of the union of the intervals 
$(a_i, b_i)$.
The transfer of the trajectory from the state $x(a_i)$ to $x(b_i) = A(i)x(a_i)$
is called a {\em jump}, and $a_i$ is a {\em jump point}. 
The set of all trajectories will be denoted by $\cX$. 

\begin{remark}\label{rem:conca}
Let $(\cA, \balpha , \cB)$ be a mixed system. Then its set of switching laws ${\cal{S}}{\cal{W}}$ is shift-invariant and closed by concatenation on their active domains, i.e., given two switching laws
${\it sw}^1$ and ${\it sw}^2$ in ${\cal{S}}{\cal{W}}$
and a time $T\in \cT({\it sw}^1)$, one can concatenate 
the restriction to $[0,T)$ of ${\it sw}^1$ with 
${\it sw}^2$, 
 in such a way to provide a switching law ${\it sw}={\it sw}^1_{|[0,T)}\ast {\it sw}^2$ in ${\cal{S}}{\cal{W}}$. 
\end{remark}

\begin{ex}\label{ex.100}{\em An important special case of a mixed system is a continuous-time 
linear switching system $\dot x   = B(t)x$,  $t \in \re_+$, 
where for each $t$, the matrix $B(t)$ is from a finite  set of matrices 
$\cB = \{B_1, \ldots , B_n\}$, 
and for several of them, say, for   $B_1, \ldots , B_m$, 
the dwell times are fixed. 
This means that each matrix $B_i$, $1\leq i \leq m,$ can 
be activated only for a time interval of a prescribed length $\alpha_i > 0$ (or positive multiples of it).
By setting $A_i = e^{\alpha_i B_i}$, $i \le m$, we obtain 
a mixed system $(\cA,\balpha,\cB)$ with 
$\cA = \{A_1, \ldots , A_m\}$, $\balpha = \{\alpha_1, 
\ldots , \alpha_m\}$, and $\cB = \{B_{m+1}, \ldots , B_n\}$. However, 
not every mixed system has this form, because not all matrices can be presented as matrix exponentials. 
}
\end{ex}

The definitions of stability and of Lyapunov exponent are 
directly extended to mixed systems.   
\begin{defi}\label{d.110}
  A mixed system $(\cA, \balpha , \cB)$ is stable if every   
  trajectory is bounded on its active domain. It is asymptotically stable 
  if for every trajectory $x$, we have $x(t) \to 0$ as ${t \to \infty, \ 
  t \in \cT}$. 
\end{defi}
The previous definition makes sense since clearly $\cT$ contains an increasing sequence of points tending to infinity. 
\begin{defi}\label{d.120}
  The Lyapunov exponent of a mixed system is the quantity 
\[
\sigma (\cA, \balpha , \cB) \quad = \quad 
\inf\,  \Bigl\{ \beta \in \re \ \Bigr| \ 
\limsup_{t\to\infty,\ t\in\cT}\frac{\ln(\|x(t)\|)}t\, \le \,\beta
\, , \ x \in \cX\,   \Bigr\}.
\]
\end{defi} 
The properties of the Lyapunov exponent $\sigma$ are similar to those 
for the classical discrete-time or continuous-time linear switching systems. 
For example, one can easily establish the following ``shift identity": 
for every $\tau \in \re$, we have 
\begin{equation}\label{eq.shift}
\sigma\, \Bigl( \delta_\tau^\balpha(\cA) \, , \balpha\, 
, \, \cB + \tau I \, \Bigr)\quad = \quad \tau \ + \  \sigma \Bigl(\cA, \balpha, \cB \Bigr)\, . 
\end{equation}
\smallskip

In the following lemma, we prove a Fenichel type of result for mixed systems, namely that asymptotic stability and exponential stability are equivalent properties for a mixed system.
\begin{lemma}\label{le:fenichel}
Let  $(\cA, \balpha , \cB)$ be a mixed system and define
\[\hat \sigma(\cA, \balpha , \cB)=\inf\,  \Bigl\{ \beta \in \re \ \Bigr| \ 
\exists C>0\mbox{ s.t. }\|x(t)\|\le C e^{\beta t}\|x(0)\|\mbox{ for all }x \in \cX\,,\ t\in\cT  \Bigr\}.
\]
Then 
\begin{description}
\item[{\bf a)}] 
$ \sigma(\cA, \balpha , \cB)=\hat \sigma(\cA, \balpha , \cB)$;
\item[{\bf b)}]
$(\cA, \balpha , \cB)$ is asymptotically stable if and only if $\sigma(\cA, \balpha , \cB)<0$.
\end{description}
\end{lemma}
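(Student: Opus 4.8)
My plan is to reduce both items to a single implication and to isolate there the one genuinely nontrivial (compactness) step.

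First, the inequality $\sigma\le\hat\sigma$ is immediate from the definitions: if $\|x(t)\|\le Ce^{\beta t}\|x(0)\|$ for all $x\in\cX$ and $t\in\cT$, then $\tfrac1t\ln\|x(t)\|\le\beta+\tfrac1t\ln(C\|x(0)\|)\to\beta$, so every $\beta$ admissible for $\hat\sigma$ is admissible for $\sigma$. For the reverse inequality I would use the shift identity. Exactly as in \eqref{eq.shift}, the rescaling $x\mapsto e^{\tau t}x$ is a bijection between the trajectory sets of $(\cA,\balpha,\cB)$ and of $(\delta^{\balpha}_{e^{\tau}}(\cA),\balpha,\cB+\tau I)$, and since $\|y(t)\|=e^{\tau t}\|x(t)\|$ while $\|y(0)\|=\|x(0)\|$ it adds $\tau$ to \emph{both} $\sigma$ and $\hat\sigma$. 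Consequently it suffices to establish the implication that $\hat\sigma(\cA,\balpha,\cB)\ge0$ forces $\sigma(\cA,\balpha,\cB)\ge0$: applying this to the system shifted by $-\beta$ for any $\beta\le\hat\sigma$ gives $\sigma-\beta\ge0$, i.e. $\beta\le\sigma$, whence $\hat\sigma\le\sigma$. I will moreover arrange that the proof of this implication exhibits a concrete trajectory that does not converge to $0$ whenever $\hat\sigma\ge0$; call this assertion \textbf{[K]}. As explained in the last paragraph, \textbf{[K]} yields both items.

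To prepare the proof of \textbf{[K]} I record an a priori bound and a compactness property. With $M_B=\sup_{B\in\cB}\|B\|$, $M_A=\max_i\|A_i\|$ and $\alpha_{\min}=\min_i\alpha_i>0$, Gr\"onwall's inequality on the active intervals, together with the fact that at most $t/\alpha_{\min}$ jumps occur before time $t$ (each dark interval has length $\ge\alpha_{\min}$), gives the uniform estimate $\|x(t)\|\le e^{Kt}\|x(0)\|$ with $K=M_B+\alpha_{\min}^{-1}\ln\max(M_A,1)$; in particular $\sigma,\hat\sigma<\infty$. Next, over any bounded window $[0,S]$ the transition operator $x(0)\mapsto x(t)$ is a product of at most $S/\alpha_{\min}$ jump matrices drawn from the finite set $\cA$ and of fundamental solutions of $\dot x=B(\cdot)x$ on the active subintervals; the former range over a finite set, while the latter have derivatives bounded by $M_B$ and hence form a uniformly bounded, equicontinuous family. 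Replacing $\cB$ by its closed convex hull (which a standard relaxation argument shows leaves $\sigma$ and $\hat\sigma$ unchanged), Arzel\`a--Ascoli then shows that the set of transition operators over $[0,S]$ is compact and that every limit is again realised by a switching law in ${\cal{S}}{\cal{W}}$.

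The heart of the matter is \textbf{[K]}, and I expect it to be the main obstacle. When $\hat\sigma\ge0$ the system is not uniformly exponentially stable, so there are unit initial data whose trajectories fail to decay at any uniform negative rate; the task is to distil from them a single trajectory that does not tend to $0$. Passing to a limit of renormalised trajectories over longer and longer windows, via the compactness above and the shift-invariance and concatenation-closure of ${\cal{S}}{\cal{W}}$ (Remark~\ref{rem:conca}), produces a limiting object, but here lies the subtlety: a naive forward limit loses the growth, which occurs at escaping times, whereas a backward limit only yields an orbit bounded on $(-\infty,0]$, which is consistent with forward decay. I would circumvent this exactly as in the discrete-time theory, by extracting \emph{near-periodic} switching laws: a periodic law with return map $M$ over a period $T$ produces, along a leading eigenvector of $M$, a genuine infinite trajectory of exponential rate $T^{-1}\ln\rho(M)$, the periodicity automatically matching the direction at each return. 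The residual work is then a Berger--Wang-type inequality for the mixed system, asserting that the uniform rate $\hat\sigma$ is approximated by such $T^{-1}\ln\rho(M)$; this is where the finite-time compactness and the bounded jump density ($\alpha_{\min}>0$) are essential, and it is the genuinely technical point.

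Finally I assemble the statements. By \textbf{[K]} and the shift reduction one gets $\hat\sigma\le\sigma$, which together with the trivial inequality proves $\sigma=\hat\sigma$, i.e. item \textbf{a)}. For item \textbf{b)}, if $\sigma<0$ then $\hat\sigma=\sigma<0$ furnishes $C>0$ and $\beta<0$ with $\|x(t)\|\le Ce^{\beta t}\|x(0)\|\to0$, so the system is asymptotically stable; conversely, if $\sigma\ge0$ then $\hat\sigma\ge0$, and the non-decaying trajectory produced in \textbf{[K]} shows that the system is not asymptotically stable. This settles both equivalences.
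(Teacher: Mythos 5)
Your reduction to a single implication via the shift identity, the trivial inequality $\sigma\le\hat\sigma$, the Gr\"onwall a priori bound, and the Arzel\`a--Ascoli compactness of transition operators over bounded windows (after convexifying $\cB$) all match the paper's setup. The problem is that your central assertion \textbf{[K]} --- that $\hat\sigma\ge 0$ forces the existence of a single trajectory not tending to $0$ --- is not actually proved: you defer it to the extraction of near-periodic switching laws and ``a Berger--Wang-type inequality for the mixed system,'' and you yourself flag this as ``the genuinely technical point.'' That inequality is a substantial theorem in its own right, it is nowhere established in the paper, and your sketch of how finite-time compactness would yield it is not an argument (as you note, naive forward or backward limits lose the growth). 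As it stands the proposal establishes only the easy inequality $\sigma\le\hat\sigma$ and the easy direction of \textbf{b)}.

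The gap is avoidable because \textbf{[K]} is strictly stronger than what the lemma needs. The paper argues in the opposite direction: assume asymptotic stability and show $\hat\sigma<0$ directly (the shift identity then upgrades this to $\hat\sigma\le\sigma$ exactly as in your reduction). Concretely, one claims there is a $T>0$ such that every trajectory starting on the unit sphere satisfies $\|x(t)\|\le 1/2$ for some $t\in[0,T]\cap\cT$; if not, one takes unit vectors $x_k$ and switching laws ${\it sw}_k$ with $\|x(t;x_k,{\it sw}_k)\|\ge 1/2$ on $[0,k]\cap\cT({\it sw}_k)$, and the compactness lemma applied to the interpolated trajectories produces a limit trajectory of $(\cA,\balpha,\overline{{\rm co}}(\cB))$ bounded below by $1/2$ on its whole active domain, contradicting asymptotic stability via the relaxation theorem of Ingalls--Sontag--Wang. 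Iterating the halving property through the shift-invariance and concatenation closure of ${\cal S}{\cal W}$ gives $\|x(t)\|\le C2^{-t/T}\|x(0)\|$, hence $\hat\sigma<0$, and both items follow without ever producing a non-decaying witness trajectory. I recommend you reorganize the proof along these lines, or else supply a complete proof of the Berger--Wang-type statement you invoke.
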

Before providing a proof, let us introduce the next definition.
\begin{defi}[Interpolation of a trajectory of a mixed system]\label{d.121}
 Let $x:\cT\to \mathbb{R}^d$ be a trajectory of a mixed system $(\cA, \balpha , \cB)$ with active domain $\cT$ and dark domain $
 \re_+\setminus\cT$. The interpolation $\hat x$ of $x$ is the curve $\hat x:[0,\infty)\to \mathbb{R}^d$ defined as $\hat x=x$ on $\cT$ and by linear interpolation on the dark domain
 , i.e., $\hat x(t)=x(a)+\frac{t-a}{b-a}(x(b)-x(a))$ for $t$ in a connected component $(a,b)$ of 
 $ \re_+\setminus\cT$.
 We use $\widehat{\cX}$ to denote the set of all interpolated trajectories. 
\end{defi}

It is clear that any interpolation $\hat x$ of a trajectory $x$ of a mixed system is continuous and piecewise $C^1$ with a derivative verifying the following property: there exists a positive constant $C$ only depending on $(\cA, \balpha , \cB)$ such that 
$\Vert\dot {\hat x}(t)\Vert\leq C\Vert \hat x(t)\Vert$ for a.e. $t\in\cT$ and $\Vert\dot{\hat {x}}(t)\Vert\leq C\Vert \hat x(a)\Vert$ if $t\in(a,b)$ for every connected component $(a,b)$ of $ \re_+\setminus\cT$.
%
Based on such a property we deduce the following compactness result.

\begin{lemma}\label{lem:AA}
Let $\cB$ be compact and convex and consider $K\subset \re^d$ compact. 
Then for every sequence $({\it sw}_k)_{k\in\mathbb{N}}\subset {\cal S}{\cal W}$  
and every sequence $(x_k)_{k\in\mathbb{N}}\subset K$, there exist 
${\it sw}\in {\cal S}{\cal W}$  
and $x\in K$ such that, up to subsequence, $\hat x(\cdot;x_k,{\it sw}_k)\to \hat x(\cdot;x,{\it sw})$ uniformly on 
$[0,T]$ for every $T>0$. Moreover, for $T>0$, 
$\cT({\it sw}_k)\cap [0,T+1/k] \to \cT({\it sw})\cap [0,T]$ in the sense of the Hausdorff distance. 
\end{lemma}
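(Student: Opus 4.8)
The plan is to establish the compactness of the interpolated trajectories, extract limits of the discrete jump data and of the continuous controls separately, and reassemble them into a single admissible switching law. Write $\hat x_k=\hat x(\cdot;x_k,{\it sw}_k)$ and set $\alpha_{\min}=\min_{1\le j\le m}\alpha_j>0$. First I would produce a uniform limit of the interpolations. Since $K$ is compact, $\sup_k\|x_k\|<\infty$; the derivative bound recalled before the lemma, integrated across the continuous stretches and across the (boundedly many) jumps, each of which amplifies the norm by at most a fixed factor, gives a Gronwall-type estimate $\|\hat x_k(t)\|\le M(T)$ for $t\in[0,T]$, uniformly in $k$, and hence $\|\dot{\hat x}_k(t)\|\le C\,M(T)$ for a.e.\ such $t$. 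Thus $\{\hat x_k\}$ is equibounded and equi-Lipschitz on each $[0,T]$, so by Arzel\`a--Ascoli and a diagonal extraction over $T=1,2,\dots$ a subsequence converges uniformly on every $[0,T]$ to a Lipschitz curve $\hat x_\infty$; passing to a further subsequence I may assume $x_k\to x\in K$, with $\hat x_\infty(0)=x$.

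Next I would pass to the limit in the jump structure. As each jump interval has length at least $\alpha_{\min}$ and the intervals are disjoint, at most $\lfloor T/\alpha_{\min}\rfloor+2$ of them meet $[0,T]$; so, refining the subsequence and diagonalizing over $T$, the number of jumps in each $[0,T]$ stabilizes, the matrices $A^k(i)\equiv A(i)$ and the lengths $\alpha(i)$ become constant (as $\cA$ is finite), and the jump points converge, which I arrange to be fast: $|a_i^k-a_i|<1/k$, $|b_i^k-b_i|<1/k$, with $b_i=a_i+\alpha(i)$. The ordering $b_i^k\le a_{i+1}^k$ survives in the limit, so $\{((a_i,b_i),A(i))\}_i$ is an admissible jump sequence; passing to the limit in $\hat x_k(b_i^k)=A(i)\hat x_k(a_i^k)$ and using that each $\hat x_k$ is affine on $(a_i^k,b_i^k)$ shows that $\hat x_\infty$ satisfies $\hat x_\infty(b_i)=A(i)\hat x_\infty(a_i)$ and is the prescribed linear interpolation of Definition~\ref{d.121} on each dark interval.

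Then I would recover the continuous controls and assemble the switching law, and this is where the main difficulty lies. On any compact interval strictly inside an active component the curve $\hat x_k$ eventually coincides with $x_k$ and solves $\dot x_k=B_k(t)x_k$ with $B_k(t)\in\cB$, so $\dot{\hat x}_k(t)$ lies in the convex compact velocity set $\cB\,\hat x_k(t)$. Since $\dot{\hat x}_k$ is bounded in $L^\infty$, it converges weakly to $\dot{\hat x}_\infty$, and the hard part is to prove the closure relation $\dot{\hat x}_\infty(t)\in\cB\,\hat x_\infty(t)$ for a.e.\ $t\in\cT({\it sw})$: this is a closure theorem for the linear differential inclusion $\dot x\in\cB x$, obtained via Mazur's lemma together with the convexity and compactness of $\cB$ (which are indispensable — without convexity the limit would only solve the convexified inclusion) and the uniform convergence $\hat x_k\to\hat x_\infty$, and it must be run on active subintervals whose endpoints move with $k$, so I would exhaust each active component from inside. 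A measurable selection theorem then yields a measurable $B_\infty:\cT({\it sw})\to\cB$ with $\dot{\hat x}_\infty=B_\infty\hat x_\infty$ a.e.; combining $B_\infty$ with the jump data defines ${\it sw}\in{\cal S}{\cal W}$ and gives $\hat x_\infty=\hat x(\cdot;x,{\it sw})$.

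Finally I would deduce the Hausdorff convergence of the active domains. Fixing $T$, every point of $\cT({\it sw})\cap[0,T]$ away from the dark intervals is a limit of active points of ${\it sw}_k$ since the jump endpoints converge, while a boundary point such as $b_i=T$ is approximated by $b_i^k\in\cT({\it sw}_k)$ with $|b_i^k-T|<1/k$ — this is precisely why the statement intersects with $[0,T+1/k]$; conversely every point of $\cT({\it sw}_k)\cap[0,T+1/k]$ lies within $O(1/k)$ of $\cT({\it sw})\cap[0,T]$ by the same endpoint estimates. This yields the claimed convergence in the Hausdorff distance.
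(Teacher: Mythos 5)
Your proof is correct and follows essentially the same route as the paper's: Arzel\`a--Ascoli for the equibounded, equi-Lipschitz interpolated trajectories, extraction of a limiting dark-interval structure using the uniform bound on the number of dark components in $[0,T]$, and closedness via the classical closure property of the linear differential inclusion $\dot x\in\cB x$ for compact convex $\cB$. The paper merely cites that last closure property as ``the well-known corresponding property in the case $\cA=\emptyset$'', whereas you spell out the Mazur-lemma and measurable-selection details; your explanation of the role of the window $[0,T+1/k]$ (a dark interval of the limit ending exactly at $T$) also matches the intent of the statement.
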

\begin{proof}
Let us start by noticing that, given $T>0$, up to subsequence, 
$\cT({\it sw}_k)\cap [0,T+1/k]$ converges to the complement in $[0,T]$ of a finite number of intervals of the type $(a,a+\alpha)\cap [0,T]$ with $\alpha\in\balpha$, 
since the  number of connected components 
of the dark domain $\re_+\setminus \cT({\it sw}_k)$ intersecting $[0,T]$ is uniformly bounded. 
By a diagonal argument, the convergence holds for every $T>0$. 

Let us now deduce the first part of the statement from Arzel\`a--Ascoli theorem, by checking that  
the restrictions to $[0,T]$ of trajectories from $\widehat{\cX}$ starting in $K$ 
form a closed, uniformly bounded and equicontinuous set. 
Uniform boundedness is clear from the finiteness of $\cA$ and the boundedness of $\cB$, while
equicontinuity follows from the remark before the lemma.
Finally, closedness is a consequence  of 
the well-know corresponding property in the case $\cA=\emptyset$ and 
the convergence up to subsequence of the active domains. 
\end{proof}

\begin{proof}[{Proof of Lemma~\ref{le:fenichel}}] 
%
It is clear that 
$ \sigma(\cA, \balpha , \cB)\le \hat \sigma(\cA, \balpha , \cB)$
and that $\sigma(\cA, \balpha , \cB)<0$ implies asymptotic stability. 

The lemma is proved if we show that asymptotic stability implies that 
$\hat \sigma(\cA, \balpha , \cB)<0$. 
Indeed, by means of \eqref{eq.shift} and together with the trivial implication in Item~{\bf a)}, this shows that if 
$ \sigma(\cA, \balpha , \cB)<\lambda$ for some $\lambda\in \mathbb{R}$, then 
 $\hat \sigma(\cA, \balpha , \cB)<\lambda$ is also true, that is, $\hat \sigma(\cA, \balpha , \cB)\le \sigma(\cA, \balpha , \cB)$. 

%
Let $S$ be the unit sphere of $\mathbb{R}^d$ for the norm $\|\cdot\|$.
We claim that there exists a time $T>0$ such that, for every $x\in S$ and ${\it sw}\in {\cal S}{\cal W}$, one has $\Vert {x}(t;x,{\it sw})\Vert\leq 1/2$ for some $t\in [0,T]\cap \cT({\it sw})$. Indeed, arguing by contradiction, one should have that for every $k\in \mathbb{N}$ there exist
$x_k\in S$ and ${\it sw}_k\in {\cal S}{\cal W}$
such that 
\begin{equation}\label{unmezzo}
\|{x}(t;x_k,{\it sw}_k)\|\ge1/2
\end{equation}
 for every $t\in[0,k]\cap \cT({\it sw}_k)$. 
Denoting by $\overline{{\rm co}}(\cB)$ the closure of the convex hull of $\cB$,
by Lemma~\ref{lem:AA} there exist
a trajectory ${x}_{*}$ of $(\cA,\balpha,\overline{{\rm co}}(\cB))$ with active domain $\cT$  
such that
$\|{x}_*(t)\|\ge1/2$ for every $t\in \cT$. 
Hence $(\cA,\balpha,\overline{{\rm co}}(\cB))$ is not asymptotically stable, which, by a standard approximation argument (see \cite{IngallsSontagWang}), contradicts  the asymptotic stability of 
$(\cA,\balpha,\cB)$
and, thus, proves the claim.
%

One easily deduces from the claim and the 
shift-invariance property observed in Remark~\ref{rem:conca} that 
there exists $C>0$ such that 
$\|{x}(t)\|\leq C 2^{-t/T}\|{x}(0)\|$ for every ${x}\in\widehat{\cX}$ and every $t\in\cT$, 
concluding the proof of the lemma.  
\end{proof}

The notions of non-defectiveness and irreducibility 
extend to mixed systems as follows.
\begin{defi}\label{d.31}
A mixed system $(\cA, \balpha , \cB)$ is said to be 
\begin{itemize}
\item 
\emph{non-defective} if there exists a positive constant $C$ such that
$\|x(t)\|\leq Ce^{\sigma t}\|x(0)\|$  for every $x\in \cX$ and every $t\in \cT$,
where $\sigma = \sigma(\cA, \balpha,\cB)$;
\item 
\emph{irreducible} if $\cA\cup \cB$ is irreducible.
\end{itemize}
\end{defi}

Let us note that a trajectory of a mixed system may reach the origin at some 
time~$\tilde t<\infty$, after which it stays at the origin forever. This situation is impossible for continuous-time systems, but for mixed systems it can happen, provided that one of the matrices $A_j$ is degenerate. 

Given a trajectory $x(\cdot)$ of the mixed system and $f:\re^d\to \re_+$ positive define,
we 
say that \emph{$f(x(t))$   strictly decreases in $t$} if 
$f(x(t_1)) > f(x(t_2))$ for every $t_1, t_2 \in \cT$ such that 
$t_1 < t_2$ and $x(t_1) \ne 0$. 
Thus, the value $f(x(t))$ decreases 
not on the whole $\re_+$, where it may not be defined, but on the active domain. 
Moreover, if the trajectory stabilizes at zero at some time $\tilde t$, then we require $f(x(t))$ to strictly decrease only for $t < \tilde t$.

We now formulate the main theorem on extremal and invariant norms for mixed systems. 

\begin{theorem}\label{th.40}
Let $(\cA, \balpha , \cB)$ be a mixed system and set $\sigma= \sigma(\cA, \balpha,\cB)$. Then the following holds:

\textbf{a)} For $\lambda\in\mathbb{R}$, $\sigma
< \lambda$ if  and only if there exists a norm in $\re^d$ such that
for every trajectory $x\in\cX$, the function $\|e^{-\lambda t} x(t)\|$
strictly decreases on $\cT$.   In the 
corresponding operator norm, we have $\|A_j\| \, < \, e^{\alpha_j \lambda } \, , \, 
A_j \in \cA$, and for each $x$ in the unit sphere of this norm, all vectors $(B - \lambda I) x, \, B\in \cB, $ starting at $x$, are directed 
inside the unit sphere (i.e., $\|x+\varepsilon  (B - \lambda I) x\|<1$ for every $\varepsilon>0$ small enough).

\textbf{b)} If $(\cA, \balpha , \cB)$ is non-defective, then 
it has an \textbf{extremal norm}, for which 
every trajectory  possesses the property 
$\|x(t)\| \le e^{\sigma t}\|x(0)\|, \, t \in \cT$.  

\textbf{c)}  If $(\cA, \balpha , \cB)$ is irreducible and $\cB$ is compact and convex, then 
it possesses an \textbf{invariant norm}, for which 
all trajectories 
satisfy 
$\|x(t)\| \le e^{\sigma t}\|x(0)\|, \, t \in \cT$, and for every $x_0 \in \re^d$ there exists a 
trajectory $\bar x$ starting at $x_0$ such that 
$\|\bar x(t)\| = e^{\sigma t}\|x_0\|$,  $t \in \cT$. 
\end{theorem}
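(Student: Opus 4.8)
The plan is to follow the pattern of Theorem~\ref{th.20}, reducing each statement to a normalized system by means of the shift identity~\eqref{eq.shift}, and then to build the required norms as suprema of a reference norm along trajectories, exactly as one does for discrete-time families but now over the whole set ${\cal S}{\cal W}$ of switching laws. The two extra ingredients that replace the classical compactness of matrix products are Lemma~\ref{le:fenichel} (which upgrades asymptotic to exponential stability) and the compactness Lemma~\ref{lem:AA}; the shift-invariance and concatenation property of Remark~\ref{rem:conca} plays the role of submultiplicativity.

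For Item~\textbf{a)}, after using~\eqref{eq.shift} we may assume $\lambda=0$, so that $\sigma<0$. If such a norm exists, then the strict decrease of $\|x(t)\|$ on the compact unit sphere, together with Lemma~\ref{lem:AA} applied to $\overline{{\rm co}}(\cB)$, forces a uniform contraction over a fixed time window, and concatenation yields $\sigma<0$; the operator inequalities $\|A_j\|<1$ and the fact that $Bx$ points inside the unit sphere are the infinitesimal restatements of this decrease at jump points and on $\cT$. Conversely, if $\sigma<0$ then by Lemma~\ref{le:fenichel} the system is exponentially stable, so choosing $\lambda'\in(\sigma,0)$ the sup-norm
\[
\|x\|_* \;=\; \sup_{{\it sw}\in{\cal S}{\cal W}}\ \sup_{t\in\cT({\it sw})}\ \bigl\|e^{-\lambda' t}x(t;x,{\it sw})\bigr\|
\]
is finite and equivalent to $\|\cdot\|$. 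It is extremal at rate $\lambda'$, hence $\|e^{-\lambda' t}x(t)\|_*$ is non-increasing; multiplying by the strictly decreasing factor $e^{\lambda' t}$ (recall $\lambda'<0$) shows that $\|x(t)\|_*$ strictly decreases, that $\|A_j\|_*\le e^{\lambda'\alpha_j}<1$, and that $Bx=(B-\lambda' I)x+\lambda' x$ points strictly inside the unit ball.

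For Item~\textbf{b)}, I would take the same sup-norm with $\lambda'$ replaced by $\sigma$, namely $\|x\|_*=\sup_{{\it sw},\,t}\|e^{-\sigma t}x(t;x,{\it sw})\|$. Non-defectiveness (Definition~\ref{d.31}) is exactly the statement that this supremum is finite, and the trivial lower bound at $t=0$ gives $\|x\|\le\|x\|_*$, so $\|\cdot\|_*$ is a genuine norm. Extremality, $\|x(t)\|_*\le e^{\sigma t}\|x(0)\|_*$, is then immediate from the shift-invariance and concatenation of switching laws in Remark~\ref{rem:conca}: the tail of any trajectory from time $s$ is again a trajectory, so the supremum defining $\|e^{-\sigma s}x(s)\|_*$ is dominated by the one defining $\|x(0)\|_*$.

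The heart of the proof, and the step I expect to be the main obstacle, is Item~\textbf{c)}. After normalizing $\sigma=0$ via~\eqref{eq.shift}, I would build a Barabanov-type norm as the decreasing limit
\[
\|x\|_B \;=\; \lim_{T\to\infty}\ \sup_{{\it sw}\in{\cal S}{\cal W}}\ \bigl\|x(T;x,{\it sw})\bigr\|_*,
\]
where $\|\cdot\|_*$ is the extremal norm of Item~\textbf{b)} (irreducibility is first used, through the usual invariant-subspace argument run with Lemma~\ref{lem:AA}, to guarantee non-defectiveness and hence the existence of $\|\cdot\|_*$). The limit exists because the inner supremum is non-increasing in $T$, and $\|\cdot\|_B$ is a seminorm dominated by $\|\cdot\|_*$; its positive definiteness is where irreducibility enters decisively, since its null space is invariant under every $A_j$ and every flow $e^{tB}$ and cannot be all of $\re^d$ (that would force $\sigma<0$). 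Extremality of $\|\cdot\|_B$ follows as in Item~\textbf{b)}. The delicate point is the existence, for each $x_0$, of a trajectory saturating the bound: I would select, by the compactness of $\cB$ and Lemma~\ref{lem:AA}, switching laws ${\it sw}_T$ attaining $\sup_{{\it sw}}\|x(2T;x_0,{\it sw})\|_*$, observe via monotonicity of $\|\cdot\|_*$ along trajectories that $\|x(s;x_0,{\it sw}_T)\|_*\ge\|x_0\|_B$ for all $s\le 2T$, and pass to a limiting switching law $\overline{{\it sw}}$ by diagonal extraction over time windows. For the limit trajectory $\bar x=x(\cdot;x_0,\overline{{\it sw}})$ one then checks $\|\bar x(s)\|_B\ge\|x_0\|_B$ for all $s$, which combined with extremality yields $\|\bar x(s)\|_B=\|x_0\|_B$; undoing the normalization gives $\|\bar x(t)\|=e^{\sigma t}\|x_0\|$. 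Controlling this limit — ensuring that the almost-saturating trajectories do not drift off the $\|\cdot\|_B$-sphere in the limit, and that the reached switching law is admissible — is the main technical difficulty, and it is precisely what Lemma~\ref{lem:AA} and Remark~\ref{rem:conca} are designed to handle.
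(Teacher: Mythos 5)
Your proposal is correct and follows essentially the same route as the paper: the sup-over-trajectories seminorm $\varphi$, normalized via the shift identity \eqref{eq.shift} and shown finite using Lemma~\ref{le:fenichel} (for \textbf{a)}), non-defectiveness (for \textbf{b)}), or the irreducibility/invariant-subspace argument plus a covering compactness step (for \textbf{c)}), followed by the same Barabanov-type limit norm whose saturating trajectory is extracted with Lemma~\ref{lem:AA} and the concatenation property of Remark~\ref{rem:conca}. The only places where the paper is more careful than your sketch are technical rather than conceptual: it evaluates the limit defining the invariant norm along the \emph{interpolated} trajectories $\hat x$ (so that times falling in the dark domain make sense) and uses a $\limsup$ in $t$ rather than asserting monotonicity of the inner supremum in $T$.
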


Item~\textbf{a)} of Theorem~\ref{th.40}, together with Item~\textbf{b)} of Lemma~\ref{le:fenichel}, immediately implies the following. 
\begin{cor}\label{c.20}
A mixed system is asymptotically stable if and only if there exists 
a norm $\|\cdot \|$ in $\re^d$ such that for every trajectory $x\in\cX$, 
$\|x(t)\|$ strictly decreases in $t$. 
 \end{cor}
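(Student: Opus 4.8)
The plan is to observe that Corollary~\ref{c.20} is exactly the special case $\lambda = 0$ of Theorem~\ref{th.40}\,\textbf{a)}, read through the stability characterization of Lemma~\ref{le:fenichel}\,\textbf{b)}. No new machinery is needed: all the analytic content---the compactness argument of Lemma~\ref{lem:AA}, the Fenichel-type equivalence, and the construction of the monotone norm---has already been absorbed into those two statements, so the remaining task is purely a matter of chaining two equivalences at a common intermediate condition.

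Concretely, I would first apply Theorem~\ref{th.40}\,\textbf{a)} with $\lambda = 0$. Since $e^{-\lambda t} = 1$ when $\lambda = 0$, the theorem reads: $\sigma(\cA, \balpha, \cB) < 0$ if and only if there exists a norm on $\re^d$ for which $\|x(t)\|$ strictly decreases on $\cT$ along every trajectory $x \in \cX$. Next I would invoke Lemma~\ref{le:fenichel}\,\textbf{b)}, which states that the mixed system is asymptotically stable if and only if $\sigma(\cA, \balpha, \cB) < 0$. Splicing these two biconditionals at the shared condition $\sigma(\cA, \balpha, \cB) < 0$ yields precisely the claimed equivalence between asymptotic stability and the existence of a strictly decreasing norm, which completes the argument.

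It is worth recording that there is no genuine obstacle at the level of the corollary itself; the only point deserving a word of care is the bookkeeping of the strict-decrease convention introduced just before Theorem~\ref{th.40}, namely that monotonicity is demanded only on the active domain $\cT$ and only up to the (possible) time at which a degenerate jump drives the trajectory to the origin. With $\lambda = 0$ this convention transfers verbatim from Theorem~\ref{th.40}\,\textbf{a)} into the statement of the corollary, so no supplementary verification is required and the proof is immediate.
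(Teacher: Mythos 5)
Your proposal is correct and coincides with the paper's own argument: the authors derive Corollary~\ref{c.20} precisely by specializing Theorem~\ref{th.40}\,\textbf{a)} to $\lambda=0$ and chaining it with Lemma~\ref{le:fenichel}\,\textbf{b)} at the common condition $\sigma(\cA,\balpha,\cB)<0$. Your remark on the strict-decrease convention on the active domain is the right point of care and is likewise implicit in the paper's treatment.
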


On the other hand,  Item~\textbf{c)} of Theorem~\ref{th.40} has the following geometrical interpretation.

\begin{cor}\label{c.30}
Let $(\cA, \balpha , \cB)$ be an irreducible mixed system with $\sigma(\cA, \balpha , \cB)=0$ and $G$ be the 
unit ball of the invariant norm given in Item~\textbf{c)} of Theorem~\ref{th.40}. Then every trajectory 
starting in 
$G$ never leaves $G$. On  the other hand, if $\cB$ is compact and convex, then 
for every point $x_0$ in the boundary of $G$, there exists 
a 
trajectory that starts at $x_0$ and lies  entirely on that boundary. 
 \end{cor}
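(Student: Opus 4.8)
The plan is to observe that Corollary~\ref{c.30} is a direct geometric reformulation of Item~\textbf{c)} of Theorem~\ref{th.40} specialized to the case $\sigma=0$. Writing $\|\cdot\|$ for the invariant norm furnished by that item, the unit ball is $G=\{x\in\re^d:\|x\|\le 1\}$ and its boundary is the unit sphere $\partial G=\{x\in\re^d:\|x\|=1\}$. With $\sigma=0$ one has $e^{\sigma t}=1$, so the two assertions of Item~\textbf{c)} read as follows: every trajectory satisfies $\|x(t)\|\le\|x(0)\|$ for $t\in\cT$; and for every $x_0\in\re^d$ there is a trajectory $\bar x$ with $\bar x(0)=x_0$ and $\|\bar x(t)\|=\|x_0\|$ for all $t\in\cT$.

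For the first statement I would take an arbitrary trajectory $x\in\cX$ with $x(0)\in G$, that is $\|x(0)\|\le 1$. The sub-invariance inequality then gives $\|x(t)\|\le\|x(0)\|\le 1$ for every $t\in\cT$, so $x(t)\in G$ throughout the active domain; in particular the jump relations $x(b_i)=A(i)x(a_i)$ keep the trajectory inside $G$, since both endpoints $a_i,b_i$ lie in $\cT$. Hence a trajectory starting in $G$ never leaves $G$.

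For the second statement, assuming $\cB$ compact and convex so that Item~\textbf{c)} applies in full, I would fix $x_0\in\partial G$, i.e. $\|x_0\|=1$, and invoke the extremal trajectory $\bar x$ starting at $x_0$ with $\|\bar x(t)\|=\|x_0\|=1$ for all $t\in\cT$. Thus $\bar x(t)\in\partial G$ for every $t\in\cT$, i.e. $\bar x$ lies entirely on the boundary of $G$.

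There is no genuine obstacle here beyond Theorem~\ref{th.40}: the only points requiring care are bookkeeping ones, namely that ``lying on the boundary'' corresponds precisely to the norm being identically equal to $1$ on $\cT$, and that, since a trajectory is defined only on its active domain, the containments $x(t)\in G$ and $\bar x(t)\in\partial G$ are to be read for $t\in\cT$. It is also worth noting that the compactness and convexity of $\cB$ intervene only in the second assertion, through the extremal-trajectory part of Item~\textbf{c)}, whereas the first assertion rests solely on the sub-invariance inequality $\|x(t)\|\le e^{\sigma t}\|x(0)\|$.
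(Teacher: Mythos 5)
Your proposal is correct and coincides with the paper's treatment: the paper offers no separate proof of Corollary~\ref{c.30}, presenting it explicitly as the geometric interpretation of Item~\textbf{c)} of Theorem~\ref{th.40}, which is exactly the unwinding you carry out (sub-invariance $\|x(t)\|\le\|x(0)\|$ for the forward-invariance of $G$, and the extremal trajectory with $\|\bar x(t)\|\equiv\|x_0\|$ for the boundary statement). Your remarks on reading the containments only on the active domain $\cT$ and on where compactness and convexity of $\cB$ enter are consistent with the paper.
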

 We next provide a proof of Theorem~\ref{th.40}. 

\begin{proof}[{Proof of Theorem~\ref{th.40}}]
We split the proof into four steps.
First we construct a special positively-homogeneous monotone convex functional~$\varphi$ (Step~1)
and prove that it is actually a norm in $\re^d$ when it is finite (Step 2). As a consequence, we 
deduce 
%
Items~\textbf{a)} and \textbf{b)}.
In Step~3 we show 
that irreducibility implies non-defectiveness, and so 
$\varphi$ is an extremal norm for irreducible systems. 
Finally, in Step 4, based on $\varphi$ we construct an invariant  norm $w$. 
In view of~(\ref{eq.shift}) it suffices to consider the case $ \lambda  = 0$
in item \textbf{a)} and $\sigma = 0$ in items \textbf{b)} and \textbf{c)}.
We can also, without loss of generality, assume that $\cB\not=\emptyset$, since otherwise $(\cA,\balpha,\cB)$ is a weighted system, for which Theorem~\ref{th.20} applies.

\smallskip 

{\tt Step 1}.  For arbitrary $t \ge 0$ and $z \in \re^d$, denote
$$
\ell(z, t) \ = \ \sup\, \Bigl\{\, \|x(t)\|\, \mid \quad  
x
\in \cX\,,
 \, x(0) = z\, , \, t \in \cT \Bigr\}\ .
$$
The supremum is taken over those trajectories whose active domain contains $t$. 
The set of such trajectories is nonempty, since we are assuming that $\cB\not=\emptyset$.

For every fixed $t$, the function $\ell(\cdot , t)$ is a seminorm on~$\re^d$, i.e., it is positively homogeneous
and convex, as a supremum of homogeneous convex functions.  The
function 
\[\varphi(z) = \sup\limits_{t \in \re_+} \ell(z, t)\]
 is, therefore,
also a seminorm  as a supremum of seminorms.
Moreover, $\varphi(z) \ge \ell(z,0) = \|z\|$, hence $\varphi(z)$ is strictly positive, whenever $z\ne 0$.
For every trajectory
$x(t)$ the function $\varphi\bigl(x(t)\bigr)$ is non-increasing in~$t$
on the set $\cT$, by the concatenation property presented in Remark~\ref{rem:conca}.
%
Thus, if $\varphi(z) < +\infty$ for all $z$, then $\varphi$ is a norm which is 
non-decreasing along every trajectory of the system. 

\smallskip 

{\tt Step 2}. If $\sigma < 0$, then $\sigma < -\varepsilon$ for some positive 
$\varepsilon$. Consider the shifted system $(\delta^\balpha_\varepsilon (\cA),\balpha,\cB + \varepsilon I)$ 
and denote by $\varphi_{\varepsilon}$ the corresponding 
function $\varphi$  for this system. 

Thanks to \eqref{eq.shift} and to Item~\textbf{a)} of Lemma~\ref{le:fenichel}, 
all trajectories 
of $(\delta^\balpha_\varepsilon (\cA),\balpha,\cB + \varepsilon I)$
are uniformly bounded, and hence $\varphi_{\varepsilon}(z) < +\infty$
for each $z \in \re^d$. Therefore, 
$\varphi_{\varepsilon}$ is a norm, which is non-decreasing 
along any trajectory of the shifted system. On the other hand, 
every trajectory of the shifted system has the form $e^{-\varepsilon t}x(t)$, 
where $x\in \cX$. 
For every $t_1, t_2 \in \cT$ such that $t_1 < t_2$, 
we have $\|e^{\varepsilon t_1}x(t_1)\|
\ge \|e^{\varepsilon t_2}x(t_2)\|$. Thus, $\|x(t_2)\| \le 
e^{\varepsilon (t_1 - t_2)}\|x(t_1)\|$. Hence, the norm $\varphi_{\varepsilon}$
strictly decreases along every trajectory $x \in \cX$. 

Now consider a new norm $\|\cdot\| = 
\varphi_{\varepsilon}(\cdot)$. For arbitrary $x_0\ne 0$ and $A_j \in \cA$, take a 
switching law with $a_1 = 0, A(1) = A_j$,  and 
take   an arbitrary trajectory starting at $x_0$.  
We have $\|A_jx_0\| = \|x(b_1)\| < \|x(a_1)\| = \|x_0\|$.
Thus, $\|A_jx_0\| < \|x_0\|$. 
Since this is true for all $x_0\ne 0$, we see that $\|A_j\| < 1$. 
This proves the first property from~\textbf{a)}. 
On the other hand, as shown in~\cite{MP1, MP2} 
each norm that decreases along any trajectory possesses the 
second property from~\textbf{a)}: for every $x$ such that $\|x\|=1$, 
all the vectors $Bx, \, B \in \cB$, starting at $x$  are directed inside the unit sphere.   This completes the proof of~\textbf{a)}.  
\smallskip 

To prove~\textbf{b)} it suffices to observe that if the system is non-defective and $\sigma=0$, 
then  $\varphi(x) < + \infty$ for all~$x$. Hence, $\varphi$ is a desired extremal norm, which 
in non-decreasing along any trajectory $x \in \cX$. 
This concludes the proof 
of~\textbf{b)}. 
\smallskip

{\tt Step 3}. Let us now tackle Item~\textbf{c)}.
We begin by proving that if the system is 
irreducible, then  $\varphi(x) < + \infty$ for all~$x$, and so $\varphi$ is a norm.
Denote by~$\cL$ the set of points $x \in \re^d$ such that $\varphi (x) < +\infty$.
Since $\varphi$ is convex and homogeneous, it follows that 
$\cL$ is a linear subspace of $\re^d$.
Let us show that $\cL$ is an invariant subspace for all operators 
from $\cA$ and from $\cB$. 
For every $z \in \cL$, each trajectory 
starting at $z$ is bounded, hence each trajectory 
starting at $Az$, $A \in \cA$, is bounded as well, as a part of the 
trajectory starting at $z$. Hence, $Az \in \cL$ and so 
$\cL$ is a common invariant subspace for the family~$\cA$.
Similarly, for every $z\in \cL$, $B \in \cB$, and $t\ge 0$, $e^{t B}(z)$ is in $\cL$, from which we deduce that the tangent vector $Bz$ is also in $\cL$. 
Thus,  $ \cL$ is a common invariant subspace for
both $\cA$ and $\cB$. 
From the irreducibility
it follows that either $\cL = \re^d$ (in which case $\varphi$ is a norm) or $\cL = \{0\}$. It remains to show that the latter is impossible.

Consider the unit sphere $S = \{x \in \re^d \ | \ \|x\|=1\}$. 
 If $\cL = \{0\}$, then $\varphi (x) = +\infty$ for all $x \in S$.
For every natural $n$ denote by $\cH_{\, n}$ the set of points $z \in S$
for which there exist a trajectory starting at~$z$ and a time 
 $T = T(z) \le n$ in the corresponding active domain $\cT$
such that $\|x(T)\| > 2$. Clearly, $\cup_{n =1}^{\infty}\cH_{\, n} \, = \, S$. 
Since each $\cH_{\, n}$ is open, the compactness of~$S$
implies the existence of a finite subcovering, i.e., 
the existence of a natural $N$ such that 
$\cup_{n = 1}^{N}\cH_{\, n} \, = \, S$. Equivalently,  $T(z) \le N$
for all $z \in S$. Thus, starting from an arbitrary point $x_0 \in S$
one can consequently build a trajectory $x\in \cX$
and an increasing sequence $(t_k)_{k\in \mathbb{N}}$ in  $\cT$
such that $\|x({t_{k+1}})\| \, > \, 2\|x({t_k})\|$ and $|t_{k+1} - t_k| \le N$
for all $k$. 
For this trajectory,  $\|x(t_n)\| \, > \,  2^n$ and $t_n \le nN$, hence $\|x(t_n)\| \, > \,
e^{t_n\ln 2 / N}$. Therefore, $ \sigma  \ge \frac{\ln 2}{N} > 0 $, which contradicts the assumption.
The contradiction argument allows to conclude that  
$\cL = \re^d$ and $\varphi$ is a norm.
\smallskip

 {\tt Step 4}. 
  We have found a norm $\varphi$ which is
 non-increasing on the active domain along every trajectory~$x\in \cX$.
 By convexity of $\varphi$, this also implies that $\varphi(\hat x(t))\le \varphi(\hat x(0))$ for every 
trajectory~$\hat x\in \widehat{\cX}$ and every $t\ge 0$. 
%
Define, for every 
$x\in \mathbb{R}^d$,
$$
w(x)=\limsup_{t\to \infty}\sup_{{\it sw}\in{\cal{S}}{\cal{W}}}
\varphi(\hat{x}(t;x,{\it sw})).
$$
The finiteness of $w$ follows from the monotonicity of $\varphi$. 
Notice that, by Remark~\ref{rem:conca}, 
\begin{equation}\label{eq:wdddd}
w(x(t;x,{\it sw}))
\leq w(x),\qquad {\it sw}\in{\cal{S}}{\cal{W}},\; t\in \cT({\it sw}).
\end{equation}
We claim that $w$ is a norm. Homogeneity is obvious and subadditivity follows form the inequality
\[
\varphi(\hat{x}(t;x+y,{\it sw}))\leq 
\varphi(\hat{x}(t;x,{\it sw}))+\varphi(\hat{x}(t;y,{\it sw})),\
\quad  {\it sw}\in{\cal{S}}{\cal{W}},\ t\ge 0.
\]
Let us assume by contradiction that $w(x)=0$ for some $x\ne0$. 
It follows from \eqref{eq:wdddd} that $w(x(t;x,{\it sw}))=0$ for all ${\it sw}\in{\cal{S}}{\cal{W}}$ and $ t\in \cT({\it sw})$. Since, moreover, 
 the linear space generated by $\{x(t;x,{\it sw})\mid {\it sw}\in{\cal{S}}{\cal{W}},\;t\in \cT({\it sw})\}$,  is invariant for $\cA\cup \cB$, then it is equal to $\mathbb{R}^d$, which implies that $w\equiv 0$ on $\mathbb{R}^d$. It follows from  Item~\textbf{b)} of Lemma~\ref{le:fenichel} that
$\sigma(\cA, \balpha,\cB)<0$, leading to a contradiction. This concludes the proof that $w$ is a norm. 

Take now $x\in\mathbb{R}^d$ and  
consider two sequences $({\it sw}_k)_{k\in \mathbb{N}}\subset {\cal S}{\cal W}$ and $(t_k)_{k\in \mathbb{N}}\subset \re_+$ such that $t_k\to \infty$ as $k\to \infty$ and 
\[w(x)=\lim_{k\to \infty}\varphi(\hat{x}(t_k;x,{\it sw}_k)).\]
Since $\varphi$ is non-increasing along trajectories, we have that 
\begin{equation}\label{eq:desp}
\liminf_{k\to\infty}\min_{t\in [0,t_k]\cap \cT({\it sw}_k)}\varphi(x(t;x,{\it sw}_k))\ge w(x).
\end{equation}

 By Lemma~\ref{lem:AA},
there exists 
 ${\it sw}\in  {\cal S}{\cal W}$
 such that, up to subsequence,
 $\hat x(\cdot;x,{\it sw}_k)$ converges to $\hat x(\cdot;x,{\it sw})$ uniformly on all compacts of $\re_+$. 
 Moreover, $\cT({\it sw}_k)$ converges to $\cT({\it sw})$ on compact intervals in the sense guaranteed by Lemma~\ref{lem:AA}.
Together with \eqref{eq:desp}, this implies that 
\[\liminf_{t\to\infty,\;t\in \cT({\it sw})}\varphi(x(t;x,{\it sw}))\ge w(x).\]
Hence, by definition of $w$, $w(x)=\lim_{t\to\infty,\;t\in \cT({\it sw})}\varphi(x(t;x,{\it sw}))$. 
We conclude the proof that $w$ is an invariant norm by deducing from \eqref{eq:wdddd} that 
 $w(x(t;x,{\it sw}))=w(x)$ for every  $t\in\cT({\it sw})$. 
 \end{proof}

Having proved the existence theorem for 
extremal and invariant norms we are now able to approximate the 
Lyapunov exponent numerically by constructing  
polytopic Lyapunov functions. 
\bigskip 

\subsection{The algorithm for mixed systems}

One of the methods to prove stability of mixed systems is by discretization.
First we assume that $\cB$ is finite.

It is well known that the joint spectral radius of a compact set $\cB$ of matrices is the same as that of its convex hull ${\rm co}(\cB)$. 
If this is finitely generated, i.e., ${\rm co}(\cB) = {\rm co} \left( \{B_1,B_2,\ldots, B_m\} \right)$ then we can apply our algorithm.
If this is not the case, one possibility would be that of finding a nearby polyhedron containing ${\rm co}(\cB)$ and apply the algorithm
to the family given by the vertexes of this polyhedral set. If the set is $\eps$-close to $\cB$ then the computed joint spectral radius
is $\eps$-close to the joint spectral radius $\rho \left( {\rm co}(\cB) \right) = \rho\left( \cB \right)$.

The idea is that of constraining (\ref{eq.mix}) by imposing that the time
instants at which switching is allowed (the switching instants) for the free
matrices (those belonging to $\cB$) are multiple of a small time-duration $\tau$. 


This procedure gives rise to a weighted system 
$(\cC_{\tau},\gamma_\tau)$ 
whose corresponding modes are the elements of $
\cA$ and those of $\cB_\tau=\{{\rm e}^{\tau B}\mid B \in \cB\}$, i.e., $\cC_\tau = \cA \cup \cB_\tau$.
The weight vector $\gamma_\tau$ is obtained
associating with any element $A_i\in \cA$ its corresponding $\alpha_i$ from $\balpha$ and 
with any matrix in $\cB_\tau$ 
the weight $\tau$.

%

We recall that 
Algorithm~\ref{algoP} 
tries to find a s.m.p. $\Pi_\tau = C(k)\cdots C(1)$, with $C(1),\dots,C(k)\in  \cC_\tau$, such that $\rho({\cC}_\tau,\gamma_\tau)=\rho(\Pi_\tau)^{\frac{1}{|\Pi_\tau|}}$.
If this is done, then 
the weighted joint spectral radius is found. 

\subsection{Lower and upper bounds for the Lyapunov exponent}

Note that, for any $\tau>0$ and for an arbitrary product $\Pi$
of matrices from ${\cC}_\tau$, the Lyapunov exponent $\sigma(\cA,\balpha,\cB)$ of system \eqref{eq.mix} 
is bounded below by the quantity
\begin{equation}\label{eq.lower1}
\beta(\Pi) \ = \ \frac{1}{|\Pi|}\log\left( \rho(\Pi) \right). 
\end{equation}
Choosing the product with the biggest $\beta(\Pi)$, we find the 
best lower bound for the Lyapunov exponent. If Algorithm~\ref{algoP}
finds the s.m.p. $\Pi_{\tau}$, then this product provides 
this best lower bound. Using the short notation $\beta(\tau) = 
\beta(\Pi_{\tau})$, we get 
\begin{equation}\label{eq.lower2}
\beta(\tau) \ \le \ \sigma(\cA,\balpha,\cB)\, . 
\end{equation}

Similarly to \cite{GLP15}, an upper bound to $\sigma(\cA,\balpha,\cB)$ is found as follows.
%
%
%
%
For an arbitrary polytope $P \subset \re^d$ symmetric about the origin
and for the 
weighted
family $(C_\tau,\gamma_\tau)$,    
we define the value $\mu (\cB , P) \, = \, \mu (P)$ as
\begin{eqnarray}
\mu (P) & = & \inf \ \Bigl\{ \mu \in \re \mid 
\mbox{for each vertex}\ v \in P \ \mbox{and} \ B \in \cB, \Bigr.
\nonumber
\\
& & \Bigl. \qquad \mbox{the vector} \ (B - \mu I)v \  \mbox{is directed inside} \,
 P\, \Bigr\}\, .
\label{def-gamma}
\end{eqnarray}
For the extremal polytope  $P_{\tau}$ computed by Algorithm~\ref{algoP}
we use the short notation~$\mu(P_{\tau}) = \mu(\tau)$. 
The following simple observation is  crucial for the further results.

\begin{prop}\label{p50}
Let $\cB$ be finite and $\tau > 0$. Then for an arbitrary symmetric polytope~$P$
and for arbitrary product $\Pi$ of matrices from the weighted
family $(C_\tau,\gamma_\tau)$, we have 
\begin{equation}\label{lu}
\beta (\Pi) \ \le \  \sigma(\cA,\balpha,\cB) \ \le \ \mu (P)\, .
\end{equation}
In particular, 
\begin{equation}\label{lu-extr}
\beta (\tau) \ \le \  \sigma(\cA,\balpha,\cB) \ \le \ \mu (\tau)\, .
\end{equation}
\end{prop}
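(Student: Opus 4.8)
The plan is to prove the two inequalities in \eqref{lu} separately, after which \eqref{lu-extr} is just the specialization $\Pi=\Pi_\tau$, $P=P_\tau$. For the lower bound $\beta(\Pi)\le\sigma(\cA,\balpha,\cB)$ I would read a finite product $\Pi=C(k)\cdots C(1)$ of matrices from $(\cC_\tau,\gamma_\tau)$ as a finite switching law of the mixed system of total duration $|\Pi|=\gamma(k)+\cdots+\gamma(1)$: each factor $A_i\in\cA$ is a jump of duration $\alpha_i$ (an interval of the dark domain), and each factor $e^{\tau B}\in\cB_\tau$ is a continuous evolution $\dot x=Bx$ of duration $\tau$ spent in the active domain. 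Concatenating this law with itself indefinitely, which is admissible by the shift-invariance and concatenation property of Remark~\ref{rem:conca}, produces for each initial point a trajectory $x$ with $x(n|\Pi|)=\Pi^n x(0)$ and $n|\Pi|\in\cT$ for every $n$. Choosing $x(0)$ to be a nonzero real vector in the (real) invariant subspace of $\Pi$ associated to an eigenvalue of modulus $\rho(\Pi)$ gives $\limsup_n \tfrac1n\log\|\Pi^n x(0)\|=\log\rho(\Pi)$, hence $\limsup_{t\to\infty,\,t\in\cT}\tfrac{\log\|x(t)\|}{t}\ge\tfrac{\log\rho(\Pi)}{|\Pi|}=\beta(\Pi)$; by Definition~\ref{d.120} this yields $\sigma(\cA,\balpha,\cB)\ge\beta(\Pi)$.

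For the upper bound $\sigma(\cA,\balpha,\cB)\le\mu(P)$, set $\mu=\mu(P)$ and let $\|\cdot\|_P$ be the gauge of $P$, so that $P$ is its unit ball. The idea is to show that $\|\cdot\|_P$ is a Lyapunov function decaying at rate $\mu$ along every trajectory, in the spirit of Item~\textbf{a)} of Theorem~\ref{th.40}. First, the defining condition of $\mu(P)$, namely that $(B-\mu I)v$ is directed inside $P$ at every vertex $v$ and every $B\in\cB$, extends from the vertices to the whole boundary of $P$: for $x$ in the relative interior of a facet with outer supporting functional $\phi$, writing $x=\sum_j\lambda_j v_j$ as a convex combination of the vertices $v_j$ of that facet gives $\phi((B-\mu I)x)=\sum_j\lambda_j\,\phi((B-\mu I)v_j)\le 0$, since $\phi$ supports $P$ at each such $v_j$. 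This is exactly the infinitesimal condition of \cite{MP1,MP2} ensuring that $t\mapsto e^{-\mu t}\|y(t)\|_P$ is non-increasing along any continuous solution $\dot y=By$, $B\in\cB$.

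It then remains to control the jumps, and I expect this to be the crux of the argument and the place where the hypothesis on $P$ genuinely bites. The companion requirement is that each scaled discrete mode be non-expansive for $\|\cdot\|_P$, i.e.\ $e^{-\mu\alpha_i}A_iP\subseteq P$ for every $A_i\in\cA$; this is the discrete counterpart of the $\mu(P)$ condition and, for the extremal polytope $P_\tau$ produced by Algorithm~\ref{algoP}, it is inherited from the invariance of $P_\tau$ under the modes of $(\cC_\tau,\gamma_\tau)$ normalized by $\rho_c=e^{\beta(\tau)}$. Granting this, $e^{-\mu t}\|x(t)\|_P$ is non-increasing across both the continuous pieces (by the previous paragraph) and the jumps, so $\|x(t)\|_P\le e^{\mu t}\|x(0)\|_P$ for every trajectory $x$ and every $t\in\cT$; by equivalence of norms this gives $\hat\sigma(\cA,\balpha,\cB)\le\mu$, and Item~\textbf{a)} of Lemma~\ref{le:fenichel} then yields $\sigma(\cA,\balpha,\cB)\le\mu=\mu(P)$. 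The two technical points to make rigorous are the vertex-to-boundary extension at lower-dimensional faces, controlled through the one-sided directional derivative of the gauge, and the absorption of the jumps by $P$ at rate $\mu$, which forces $P$ to be compatible with the discrete part and holds automatically for the invariant polytope $P_\tau$ underlying \eqref{lu-extr}.
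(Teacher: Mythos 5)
The paper does not actually prove this proposition; it simply declares the argument ``completely analogous'' to the one in \cite{GLP15}, so your write-up is necessarily more explicit than the original. Your lower bound is correct and is the expected argument: a product $\Pi$ over $(\cC_\tau,\gamma_\tau)$ is realized as an admissible finite switching law of total duration $|\Pi|$ (jumps of duration $\alpha_i$ for the factors from $\cA$, continuous arcs of duration $\tau$ for the factors from $\cB_\tau$), concatenated with itself via Remark~\ref{rem:conca}; sampling at the times $n|\Pi|\in\cT$ with an initial condition in the leading real invariant subspace of $\Pi$ produces a trajectory whose exponential growth rate along $\cT$ is at least $\beta(\Pi)$, whence $\sigma(\cA,\balpha,\cB)\ge\beta(\Pi)$ by Definition~\ref{d.120}.

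For the upper bound you have correctly put your finger on the real issue: Definition \eqref{def-gamma} of $\mu(P)$ constrains only the continuous modes, so for a genuinely \emph{arbitrary} symmetric polytope the inequality $\sigma(\cA,\balpha,\cB)\le\mu(P)$ is false (take $\cB=\{0\}$ and a single expanding jump $A_1$ with $\alpha_1=1$: then $\mu(P)=0$ for every symmetric polytope, while $\sigma=\log\rho(A_1)>0$). Your repair --- adding the jump condition $e^{-\mu\alpha_i}A_iP\subseteq P$ so that $e^{-\mu t}\|x(t)\|_P$ is non-increasing across jumps as well as along continuous arcs --- is the right one, and the vertex-to-boundary extension of the inward-pointing condition via convex combinations on facets is sound. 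The one step you should not wave through is the claim that the jump condition ``holds automatically'' for the extremal polytope $P_\tau$: extremality gives $e^{-\beta(\tau)\alpha_i}A_iP_\tau\subseteq P_\tau$, which implies $e^{-\mu\alpha_i}A_iP_\tau\subseteq P_\tau$ only when $\mu=\mu(P_\tau)\ge\beta(\tau)$. That inequality is not automatic (it fails in the degenerate example above), so a complete proof must either assume it or weaken the conclusion to $\sigma\le\max\{\beta(\tau),\mu(P_\tau)\}$. This is really a defect of the statement as printed rather than of your argument --- note that the paper's own numerical examples in fact report the upper bound $\beta(\tau)+\mu(P)$ rather than $\mu(P)$, which quietly sidesteps the same issue --- but since your proof is the only one on the table, it has to confront it explicitly.
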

\begin{proof}
The proof is completely analogous to the one given in \cite{GLP15}
for classical switching systems.  
\end{proof}

If, for a polytope~$P$, we have $e^{\, \tau \, B}\, P \, \subset \, \lambda^\tau \, P\, , \ B \in \cB$,
as well as $A_i P \, \subset \, \lambda^{\alpha_i} \, P\, , \ A_i \in \cA$, then 
$\lambda \ge \rho(\cC_\tau,\gamma_\tau)$. 
Clearly, if we have an extremal polytope~$P_{\tau}$ available, then we also know the value of 
the corresponding weighted joint spectral radius.
 In some cases, however, the extremality
property is a too strong requirement, and computing the invariant polytope 
may take too much time. However, to estimate the Lyapunov exponent the following weaker version of extremality suffices:
\begin{defi}\label{d17}
Given $\eps \ge 0$,
a polytope~$P$ is called {\em $\eps$-extremal} for 
$(\cC_\tau,\gamma_\tau)$ if
$$
 e^{\tau B} \, P \ \subset \ e^{\tau \eps } \, \rho (C_\tau,\gamma_\tau)^\tau\, P\, , \qquad  B \in \cB\,, 
$$
and 
$$
 A_i \, P \ \subset \ e^{\alpha_i \eps } \, \rho (C_\tau,\gamma_\tau)^{\alpha_i}\, P\, , \qquad  A_i \in \cA\, .
$$
\end{defi}

{
When $P = P_\tau$ is extremal, the double inequality~(\ref{lu-extr}) localizes the Lyapunov exponent to the
segment $[\beta(\tau) \, , \, \mu(\tau)]$. 
The length of this segment does not exceed  a linear 
function of~$\tau$. So, the precision of the estimate~(\ref{lu-extr})
is not worse than~$C\tau$. The following theorem considers a more general case, when the polytope~$P$ is not necessarily extremal but only $\eps$-extremal.
}

\begin{theorem}\label{th8}
For every finite irreducible mixed system~$(\cA,\alpha,\cB)$,  there exists a positive constant~$C$ such that
for all $ \, \eps \ge  0$ and~$\tau$ such that the family $\cC_\tau$ is irreducible,  we have
$$
\mu (P) \ - \ \beta (\tau) \ \le \ C\tau \, + \, \eps\, ,
$$
whenever~$P$ is an $\eps$-extremal polytope for~$(\cC_\tau,\gamma_\tau)$.
\end{theorem}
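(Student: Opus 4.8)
The plan is to read $\mu(P)$ as a logarithmic matrix norm and to compare the infinitesimal growth rate it encodes with the growth rate at scale $\tau$ that $\eps$-extremality controls. Throughout I write $\rho=\rho(\cC_\tau,\gamma_\tau)$, so that $\beta(\tau)=\log\rho$, and I let $\|\cdot\|_P$ be the Minkowski gauge of the symmetric polytope $P$. First I would show
\[\mu(P)=\max_{B\in\cB}\mu_{\|\cdot\|_P}(B),\]
where $\mu_{\|\cdot\|_P}$ is the logarithmic norm (matrix measure) of $\|\cdot\|_P$. Writing $\|\cdot\|_P=\max_i\ell_i$ as the maximum of its facet functionals, at a vertex $v$ one has $D^+\|v\|_P[w]=\max_{i:\ell_i(v)=1}\ell_i(w)$ for the one-sided directional derivative, and since $\ell_i(v)=1$ on the active facets, $D^+\|v\|_P[(B-\mu I)v]=D^+\|v\|_P[Bv]-\mu$. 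Hence the condition defining $\mu(P)$ in \eqref{def-gamma} is exactly $D^+\|v\|_P[Bv]\le\mu$ for every vertex $v$ and every $B\in\cB$; since $\ell_i(Bx)$ is linear along each facet, the supremum of $D^+\|x\|_P[Bx]$ over the whole unit sphere is attained at a vertex, and by the finiteness of the facet set this supremum is precisely $\mu_{\|\cdot\|_P}(B)$.

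Next I would fix a triple $(B,v,\ell^*)$ realizing the maximum: $B\in\cB$, $v$ a vertex with $\|v\|_P=1$, and $\ell^*$ a supporting functional at $v$ with $\ell^*\le\|\cdot\|_P$, $\ell^*(v)=1$ and $\ell^*(Bv)=\mu(P)$. From the integral Taylor identity $e^{\tau B}v=v+\tau Bv+\int_0^\tau(\tau-s)e^{sB}B^2v\,ds$ together with $\|e^{\tau B}\|_P\ge\ell^*(e^{\tau B}v)$ I obtain
\[\|e^{\tau B}\|_P\ \ge\ 1+\tau\mu(P)-\frac{\tau^2}{2}\sup_{s\in[0,\tau]}\|e^{sB}B^2v\|_P\ \ge\ 1+\tau\mu(P)-C_1\tau^2,\]
for a constant $C_1$ bounding the remainder. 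Thus the infinitesimal rate $\mu(P)$ is recovered, up to an $O(\tau^2)$ error, from the $\tau$-scale quantity $\|e^{\tau B}\|_P$.

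On the other hand, $\eps$-extremality (Definition~\ref{d17}) gives $\|e^{\tau B}\|_P\le e^{\tau\eps}\rho^{\tau}=e^{\tau(\eps+\beta(\tau))}$. Combining the two bounds and using $e^{\tau(\eps+\beta(\tau))}\le 1+\tau(\eps+\beta(\tau))+C_3\tau^2$ (valid once $\eps,\beta(\tau),\tau$ lie in a bounded range) yields $\tau\mu(P)\le\tau(\eps+\beta(\tau))+(C_1+C_3)\tau^2$, whence, dividing by $\tau>0$, $\mu(P)-\beta(\tau)\le\eps+C\tau$ with $C=C_1+C_3$. The estimate is only restrictive for small $\tau$ and bounded $\eps$: when $\eps$ is large the inequality is trivial once $\mu(P)$ is bounded above, and for $\tau$ bounded away from $0$ it already follows from $\beta(\tau)\le\sigma(\cA,\balpha,\cB)\le\mu(P)$ of Proposition~\ref{p50} together with the boundedness of these quantities.

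The step I expect to be the main obstacle is the uniformity of $C_1$ over all admissible $\eps$-extremal polytopes, since $\sup_{s}\|e^{sB}B^2v\|_P$ is controlled by the operator norm $\|B\|_P$, which could blow up if $P$ degenerates (becomes very eccentric). This is exactly where the irreducibility of $\cC_\tau$ enters. The plan is to normalize $P$ and prove that all $\eps$-extremal polytopes are uniformly equivalent to a fixed Euclidean norm: the upper equivalence follows from boundedness of the normalized polytope, while the lower equivalence follows from irreducibility, because the orbit of any unit vector under $\cA\cup\cB$ spans $\re^d$ with a quantitative margin that is uniform over the (precompact) family of normalized $\eps$-extremal polytopes for $\eps\in[0,\eps_0]$, $\tau\in(0,\tau_0]$. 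This single equivalence simultaneously bounds $\|B\|_P$ (yielding a uniform $C_1$) and $\mu(P)$ (controlling the error terms and settling the large-$\eps$ case), and it is the analogue, for mixed systems, of the estimate carried out for classical switching systems in~\cite{GLP15}.
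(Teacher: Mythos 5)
Your overall route is the natural one, and it is in fact the only route available for comparison: the paper states Theorem~\ref{th8} without proof, deferring (as it does for Proposition~\ref{p50}) to the analogous argument of~\cite{GLP15} for classical switching systems. Your core computation is correct and complete as far as it goes: the identification of $\mu(P)$ with $\max_{B\in\cB}\mu_{\|\cdot\|_P}(B)$ via the one-sided derivative of the polytope gauge (including the reduction of the supremum over the sphere to the vertices, which justifies that the vertex-only condition in \eqref{def-gamma} loses nothing), the Taylor estimate $\|e^{\tau B}\|_P\ge 1+\tau\mu(P)-C_1\tau^2$ through a norming functional at the maximizing vertex, and the comparison with the $\eps$-extremality bound $\|e^{\tau B}\|_P\le e^{\tau(\eps+\beta(\tau))}$ are all sound, as is the observation that $\beta(\tau)$ is uniformly bounded (above by $\max\{\max_i\alpha_i^{-1}\log\|A_i\|,\max_B\|B\|\}$ and below by $\max_B\max\operatorname{Re}\lambda(B)$), which lets you dispose of the regimes of large $\eps$ and of $\tau$ bounded away from zero once $\mu(P)$ is known to be bounded.

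The genuine gap is exactly the one you flag, and your sketch does not yet close it: a bound on $\|B\|_{P}$ (hence on $C_1$ and on $\mu(P)$) that is uniform over \emph{all} $\eps$-extremal polytopes, all $\eps\ge 0$ and all admissible $\tau$. Precompactness of the normalized polytopes in the Hausdorff metric is not enough, because a limit point could be a degenerate (lower-dimensional) convex body; what must be proved is a quantitative spanning estimate: after normalizing $P$ inside the Euclidean unit ball with a contact point $v_0$, $\eps$-extremality gives $\rho^{-|\omega|}e^{-\eps|\omega|}\Pi_\omega v_0\in P$ for every product $\omega$ from $\cC_\tau$, and one needs the absolutely convex hull of these points over $|\omega|\le W$ to contain a Euclidean ball of radius $\delta>0$ with $\delta$ and $W$ independent of $\tau$, $\eps$ and $v_0$. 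Irreducibility of $\cC_\tau$ gives such a $\delta_\tau>0$ for each fixed $\tau$, but $\delta_\tau$ is not obviously bounded away from zero: as $\tau\to 0^+$ one has $e^{\tau B}\to I$ and $\cC_\tau$ can degenerate toward the possibly reducible family $\cA\cup\{I\}$, so the argument must pass through the irreducibility of the continuous-time system $(\cA,\balpha,\cB)$ together with an approximation of its reachable sets by $\cC_\tau$-orbits (products $e^{k\tau B}$ with $k\tau$ bounded away from $0$), plus a compactness argument over $v_0$ in the sphere; a separate (easier) argument is needed for $\tau$ large, where $\|e^{\tau B}\|/\rho^{\tau}$ may grow polynomially. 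This lemma is the real content of the theorem --- it is the analogue of the uniform norm-equivalence step in~\cite{GLP15} --- and until it is written out the constant $C$ in your final inequality is not justified to be independent of $\tau$, $\eps$ and $P$.
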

{
Thus, by inequality~(\ref{lu}), every discretization time~$\tau > 0$  gives the lower bound~$\beta(\Pi)$
for the Lyapunov exponent, and that discretization time with an $\eps$-extremal polytope~$P$ gives the upper 
bound~$\mu(P)$.
Theorem~\ref{th8} ensures that at least in case $\Pi = \Pi_{\tau}$, the precision of these bounds is linear in~$\tau$ and $\eps$. 
In particular, for $\eps = 0$,
we have the following.
}
\begin{cor}\label{c5}
If the polytope~$P_\tau$ is extremal for~$(\cC_\tau,\gamma_\tau)$, then $\mu (\tau) - \beta (\tau) \, \le \, C\tau$.
\end{cor}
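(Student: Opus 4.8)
The plan is to observe that the hypothesis of this corollary is exactly the case $\eps = 0$ of Theorem~\ref{th8}, so that the corollary is an immediate specialization requiring no new analysis. First I would recall the defining inclusions of extremality recorded just before Definition~\ref{d17}: a polytope $P$ is extremal for $(\cC_\tau,\gamma_\tau)$ precisely when $e^{\tau B} P \subset \lambda^\tau P$ for all $B \in \cB$ and $A_i P \subset \lambda^{\alpha_i} P$ for all $A_i \in \cA$, with $\lambda = \rho(\cC_\tau,\gamma_\tau)$ (this being the smallest $\lambda$ for which such inclusions can hold, by the remark preceding Definition~\ref{d17}). Setting $\eps = 0$ in Definition~\ref{d17} yields exactly these inclusions, since $e^{\tau \eps} = 1$ and $e^{\alpha_i \eps} = 1$. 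Hence an extremal polytope $P_\tau$ is nothing but a $0$-extremal polytope for $(\cC_\tau,\gamma_\tau)$.

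Second, I would invoke Theorem~\ref{th8} with $P = P_\tau$ and $\eps = 0$. The standing hypotheses of that theorem---that the mixed system $(\cA,\balpha,\cB)$ be finite and irreducible, and that $\cC_\tau$ be irreducible---are all in force here: the former is assumed throughout this subsection, while the latter is tacitly required for the extremal polytope $P_\tau$ to exist at all, since irreducibility of $\cC_\tau$ is the hypothesis under which Algorithm~\ref{algoP} produces the invariant (extremal) polytope. Theorem~\ref{th8} then delivers
\[
\mu(P_\tau) - \beta(\tau) \ \le \ C\tau + 0 \ = \ C\tau,
\]
and by the notational convention $\mu(\tau) = \mu(P_\tau)$ this is precisely the claimed bound $\mu(\tau) - \beta(\tau) \le C\tau$.

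There is no genuine obstacle here beyond the bookkeeping of matching the extremality condition to the $\eps = 0$ instance of $\eps$-extremality; the entire analytic content is already carried by Theorem~\ref{th8}. The only point deserving a word of care is confirming that the irreducibility hypotheses of Theorem~\ref{th8} are available in this setting, which they are as noted above, so that the specialization is legitimate and the conclusion follows at once.
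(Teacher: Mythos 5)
Your proposal is correct and matches the paper's treatment exactly: the paper offers no separate proof of Corollary~\ref{c5}, presenting it simply as the $\eps=0$ instance of Theorem~\ref{th8} (``In particular, for $\eps=0$\dots''), which is precisely your specialization, including the identification of extremality with $0$-extremality via Definition~\ref{d17}. Your added check that the irreducibility hypotheses of Theorem~\ref{th8} remain in force is sound bookkeeping consistent with the paper's standing assumptions.
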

{
\begin{remark}\label{r.lu}
{\em If one succeed in finding a ``proper'' polytope $P$ and 
a product $\Pi$ for which the difference $\mu(P) - \beta(\Pi)$ is small, 
then we have an a posteriori estimate~(\ref{lu}) for the Lyapunov exponent. 
Theorem~\ref{th8} shows that at least in the case when $P$ is $\varepsilon$-extremal and $\Pi$ is an s.m.p. the precision of this estimate decays linearly with~$\tau$. 
In most of practical cases this estimate behaves even better. 
}
\end{remark}
}
Proposition~\ref{p50} and Theorem~\ref{th8} suggest the following method of approximate computation
of the Lyapunov exponent $\sigma(\cA,\alpha,\cB)$:
\smallskip

1) choose a discretization time $\tau > 0$, and compute the weighted joint spectral radius $\rho(\cC_\tau,\gamma_\tau)$;

2) choose $\eps \ge 0$ and construct an $\eps$-extremal polytope~$P$ for~$(\cC_\tau,\gamma_\tau)$ (if $\eps=0$ we intend
an extremal polytope).
\smallskip

{
Then we localize the Lyapunov exponent $\sigma(\cA,\alpha,\cB)$ on the segment $[\beta(\tau) , \mu(P)]$, whose length
tends to zero  with a linear rate in~$\tau$ and $\eps$ as $\tau , \eps \to 0$.
}

If we consider a product $\Pi$ which is not an s.m.p. then we have to replace $\beta(\tau)$ by $\beta(\Pi)$. Let us recall
that for every product $\Pi$, $\beta(\Pi) \le \beta(\tau)$. Therefore if $\Pi$ is not an s.m.p. then the lower bound can be
worse than the optimal one considered in Theorem \ref{th8}.
In such case linear convergence as $\tau \rightarrow 0$ is not guaranteed. In practice nevertheless the difference
$\mu(P) - \beta(\Pi)$ can always be made sufficiently small to provide a satisfactory approximation of the Lyapunov exponent.

{\em Deriving the lower and upper bounds for the Lyapunov exponent}.
Thus, the first part of the algorithm produces an $\eps$-extremal polytope~$P$.
We compute $\mu(P)$ by definition, as the infimum of those numbers~$\mu$ such that
the vector $(B - \mu I) v $ is directed inside $P$, for each vertex $v \in P$ 
and for every $B \in \cB$.
This is done by taking a small~$\delta> 0$ and solving the following LP problem:
$$
\left\{
\begin{array}{l}
\mu \ \to \ \inf \\
v + \delta (B - \mu I)v \, \in \, P,\\
\, v \quad {\rm vertex \ of} \quad P, \ B \in \cB\, .
\end{array}
\right.
$$
Thus, as a result of Algorithms~\ref{algoP} and \ref{algoP2}, we obtain the lower bound $\beta(\tau)$ and
the upper bound~$\mu(P)$ for the Lyapunov exponent.
The polytope~$P$ identifies the Lyapunov norm for the family.
If $\mu (P) < 0$, then we conclude that the system is asymptotically stable 
and its joint Lyapunov function has the polytope~$P$ as unit ball.

\smallskip
\begin{algorithm}
\DontPrintSemicolon
\KwData{$\cB=\{B_1,\dots,B_M\}$, $P, V$ (system of 
vertices of $P=\mathrm{absco}(V)$, $\delta$ (small positive stepsize)}
\KwResult{$\mu(P)$}
\Begin{
\For{$i=1,\ldots,M$}{
\nl Solve the LP problems (w.r.t. $\{t_v,s_v\}$, $\mu_i$)
\begin{eqnarray}
\label{LP.r}
\begin{array}{rl}
\min & \mu_i
\\[0.1cm]
{\rm s.t.}
     & v + \delta (B_i - \mu_i I) v = \sum\limits_{w \in \cV}\,
		 t_w\,w - s_w\, w \quad \forall v \in V
\\[0.1cm]
{\rm and}
     & \sum\limits_{w \in V}\, t_w + s_w \le 1, \qquad t_w, s_w \ge 0 \quad
		   \forall w \in V
\end{array}
\nonumber
\end{eqnarray}
}
\nl Return $\mu(P) := \max\limits_{1 \le i \le m} \mu_i$ \;
}
\caption{Algorithm for computing  the best upper bound~$\mu(P)$ \label{algoP2}}
\end{algorithm}


\begin{ex} \label{example2} \rm
Let ${\cA}=\{ A_1 \}$  with $\alpha_1=1$ and ${\cB}=\{ B_1, B_2 \}$ with
\begin{eqnarray*}
A_1 = \left(\begin{array}{rr}
 0 & -\frac75
\\\frac75 & 0
\end{array} \right), \quad
B_1 = \log \left( \left(\begin{array}{rr}
 1 & 1
\\
-1 & 1
\end{array} \right) \right), \quad
B_2 = \log \left( \left(\begin{array}{rr}
 1 & 1
\\
-1 & 0
\end{array} \right) \right).
\end{eqnarray*}
For $\tau = 1$ we set exactly $\cC_{\tau} =$ $\{ A_1, e^{B_1}, e^{B_2} \}$ and $\gamma_\tau=(1,1,1)$.

By means of Algorithm \ref{algoP} we are able to prove that the product of degree equal to $5$,
$$\Pi = e^{B_2} A_1 e^{B_1} e^{B_2} A_1$$  
is spectrum maximizing and apply Algorithm \ref{algoP}
with $C_1 = A_1/\rho, C_2=e^{B_1}/\rho, C_3 = e^{B_2}/\rho$, where $\rho=\rho(\cC_\tau,\gamma_\tau)$.
As a result we obtain the polytope norm in Figure \ref{fig2} whose unit ball $P$ is a 
polytope with $16$ vertices.

Applying Algorithm \ref{algoP2} we obtain the optimal shift $\mu(P) \approx 0.65$, so that we have the estimate
$$
\beta(\tau) = 0.38\ldots \le \sigma(\cA,\balpha,\cB) \le 1.03\ldots = \mu(P_\tau).
$$
Figure \ref{fig2} illustrates the fact that the computed polytope $P$ is positively invariant for the shifted 
family $\cB - \mu(P) I$.

In order to increase the accuracy of the computation one has to reduce $\tau$.

\begin{figure}
\begin{center}
\includegraphics[angle=90,width=0.5\textwidth]{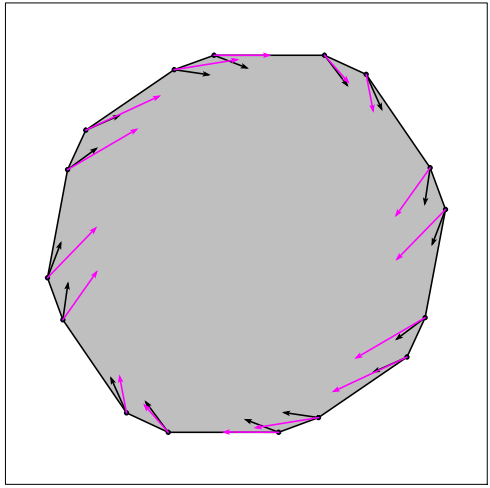}
\caption{The figure shows the extremal polytope norm computed for Example~\ref{example2}. \label{fig2}}
\end{center}
\end{figure}

\end{ex}

%

\section{Mixed  systems on graphs}\label{sec:4}

Recently many authors introduced and analysed {\em constrained discrete-time switching systems}, where not all switching laws are possible but only those satisfying certain  stationary constraints~\cite{D, K, PhJ1,  PhJ3, SFS, WRDV}. The concept slightly varies in different papers. One of the most general forms was 
considered  in~\cite{CGP}. We describe the main construction, adapted to weighted systems 
(in~\cite{CGP} this was done for the usual discrete systems, i.e., with unit weights). Then we extend it to mixed systems.
 
\bigskip 

\subsection{Discrete weighted systems on graphs}\label{sec:41}

Consider a directed strongly connected
multigraph $G$
with $n$ vertices $g_1, \ldots , g_n$. Sometimes, the vertices will be denoted by their indices.
With each  vertex~$i$ we associate a linear space $L_i$ of dimension $d_i < \infty$.
If the converse is not stated, we assume $d_i \ge 1$.  The set of spaces
$L_1, \ldots , L_n$ is denoted by $\cL$. For each vertices $i, j \in G$
(possibly coinciding),
there is a set $\cA_{ji}$  (possibly empty) of edges
from $i$ to $j$. Each edge from $\cA_{ji}$ is identified with a linear operator $A_{ji}: \, L_i \to L_j$ that has its weight $\alpha_{ji} > 0$.
Thus, we have a family of spaces~$\cL$
and a family of operators-edges  $\cA = \cup_{i, j}\cA_{ji}$ 
 that act between these spaces according to the
multigraph $G$ and have weights  $\balpha = \{\alpha_{ji}\}_{i, j}$. 
We obtain a system $\, \xi = (G, \cL, \cA, \balpha)$  made of the multigraph, the spaces, the operators, and their weights.
A path $\omega$ on the multigraph $G$ is a sequence of connected subsequent edges, its total weight (the sum of weights  of edges)
is denoted by $|\omega|$. 
With every path $\omega$ along
vertices $i_1\rightarrow i_2 \rightarrow \cdots \rightarrow i_{k+1}$ that consists of edges (operators)
$A_{i_{s+1}i_s} \in \cA_{i_{s+1}i_s}, \, s = 1, \ldots , k$, we associate the corresponding product (composition) of operators $\Pi_{\omega} = A_{i_{k+1}i_k}\cdots A_{i_2i_1}$.
Note that $|\omega| = \alpha_{i_{k+1}i_k}+ \cdots + 
\alpha_{i_2i_1}$. Let us emphasize that a path is not a sequence of vertices but edges. If $G$ is a graph, then any path is uniquely defined by the sequence of its vertices, if
$G$ is a multigraph, then there may be many paths corresponding to the same sequence of vertices.
If the path is closed ($i_1 = i_{k+1}$), then $\Pi_{\omega}$ maps the space
$L_{i_1}$ to itself. In this case $\Pi_{\omega}$ is given by a square matrix, and possesses eigenvalues, eigenvectors and a spectral radius $\rho(\Pi_{\omega})$.
The set of all
closed paths will be denoted by $\cC (G)$. For an arbitrary $\omega \in \cC(G)$ we denote by
$\omega^k = \omega \cdots \omega$ the $k$th power of $\omega$. 

In what follows we assume that all the sets 
$\cA_{ji}$ are finite.

Now we recall the concept of multinorm introduced in~\cite{PhJ1} and adopted  to arbitrary multigraph with arbitrary linear spaces in~\cite{CGP}.    
\begin{defi}\label{d.200}
If every  space $L_i$ on the multigraph $G$ is equipped with a norm $\|\cdot \|_i$, then
the collection of norms $\|\cdot \|_i, \,
i = 1, \ldots , n$, is called  a multinorm.
The norm of an operator $A_{ji} \in \cA_{ji}$ is defined as
$\, \|A_{ji}\| \, = \sup\limits_{x \in L_i, \|x\|_i = 1}\|A_{ji}x\|_j$.
\end{defi}
Note that the notation $\|x\|_i$ assumes that $x \in L_i$.
In the sequel we suppose that the multigraph $G$ is equipped with some multinorm
$\{ \|\cdot\|_i\}_{i=1}^n$.
We denote that multinorm by $\|\cdot\|$ and sometimes use the
short notation $\|x\| = \|x\|_i$ for $x \in L_i$, that is, we drop the index of the norm if it is clear
to which space $L_i$ the point $x$ belongs.

For a given $x_0 \in L_i$ and for an infinite path $\omega$ starting at the vertex $i$, we
 consider the {\em trajectory} $\{x_k\}_{k \ge 0}$ of the system along this path. Here
 $x_{k} = \Pi_{\, \omega_{k}}\, x_0$, where $\omega_{k}$ is the prefix of~$\omega$ of length~$k$.

As usual, 
the system $\xi$ is called stable if every its trajectory is bounded. 
It is called asymptotically stable if every trajectory tends to zero as $\, k \to \infty$.

As for unconstrained weighted systems, the asymptotic behaviour  is measured in terms of the 
weighted joint spectral radius, which in
 this case is defined as follows:
 \begin{equation}\label{200.jsr}
  \rho(\xi) \ = \ \lim_{k \to \infty}\, 
 \max_{{\rm length}(\omega) = k} \|\Pi_{\omega}\|^{\, 1/|\omega|}\, .
 \end{equation}
Thus, among all paths on $G$ of length $k$ we take one with the 
maximal value $\|\Pi_{\omega}\|^{\, 1/|\omega|}$, then the limit of this 
value as $k\to \infty$ is the joint spectral radius. This limit always exists, as it can be proved following the same arguments as in Lemma~\ref{lem:existence-lim}.

A system is asymptotically stable precisely when there exists a multinorm 
$\|\cdot \| = \{\|\cdot \|_i\}_{i=1}^n$ decreasing along every trajectory. 
This means that the norms of all operators $A_{ji}$ are strictly less than one. 

The concepts of extremal and invariant multinorms~\cite{PhJ1, CGP} 
are also very similar to the corresponding norms. 
A multinorm $\|\cdot \| = \{\|\cdot \|_i\}_{i=1}^n$ is extremal if for every $i$ and $x \in L_i$, we have
\begin{equation}\label{200.extr}
\max_{A_{ji} \in \cA_{ji}, \, j=1, \ldots , n} \,\rho(\xi)^{\, -\alpha_{ji}}\,  \|A_{ji}x\|_j \ \le \ 
\|x\|_i\,  .
\end{equation}
A multinorm is called invariant if   for every $i=1, \ldots , n$ and $x \in L_i$, we have
\begin{equation}\label{200.invar}
\max_{A_{ji} \in \cA_{ji}, \, j=1, \ldots , n} \, \rho(\xi)^{\,- \alpha_{ji}}\, \|A_{ji}x\|_j  \ = \ 
\|x\|_i\, .
\end{equation}

Thus, up to the normalization where one replaces every $A_{ji}$ by $\rho^{\, -\alpha_{ji}}A_{ji}$,
an invariant multinorm is non-increasing along every trajectory, and for every 
$i$ and for every starting point $x_i \in L_i$, there exists an infinite  
trajectory $x_i = x(0)\to x(1)\to x(2)\to \cdots $ such that 
$\|x(0)\| = \|x(1)\| = \|x(2)\| =  \cdots $.

The existence of invariant and of extremal multinorms was proved in~\cite{CGP} 
under the same assumptions as in Theorem~\ref{th.10}. 
The algorithm constructing extremal 
polytope multinorms (when each norm $\|\cdot \|_i$ in the space~$L_i$ is defined by a convex polytope~$P_i$) was presented in the same paper. 
In examples  and in statistics of numerical experiments it was shown that 
 the algorithm is able to find  precisely the joint spectral radius for a vast majority of constrained systems for reasonable time 
 in dimensions  up to~$20$. For positive systems, it works much faster and is applicable in higher dimensions (several hundreds).  It is interesting that 
 the case of reducible system, when all operators share a common subspace, 
 being very rare for unconstrained systems,   becomes usual, or even generic for 
 system on graphs. That is why a special procedure of reducibility was 
 elaborated in~\cite{CGP}.

\subsection{Mixed systems on graphs}\label{sec:4.2}
%

The constrained systems  or systems on graphs appeared 
almost simultaneously in several works. All of them deal with 
discrete-time systems. 
To the best of our knowledge, there is no reasonable 
concept of a continuous-time system on a graph.  
Indeed, the existence of several spaces (vertices)
between which the system can be transferred can  naturally be realized 
in the discrete-time model, but any extension to continuous time
seems hardly possible. Nevertheless, mixed system   
can be realized on graphs and for them various type of constraints can be 
introduced.  

Let us have an arbitrary (discrete-time) system $(G, \cL, \cA, \balpha)$  
on a multigraph $G$, with the spaces~$\cL = \{L_1, \ldots , L_n\}$, 
operators~$\cA = \{A_{ji}\}$, and their 
weights~$\balpha = \{\alpha_{ji}\}$. Let us in addition have 
a family $\cB = \{\cB_1, \ldots , \cB_n\}$, where each $\cB_i$ is a bounded set of 
operators acting on the space $L_i$. This identifies a mixed (discrete-continuous) system 
$\xi = (G, \cL, \cA, \balpha, \cB)$ on the multigraph~$G$. A trajectory 
of this system along an infinite path $\omega: \, i_1 \to i_2 \to \ldots $
is a solution of the system of equations on the spaces~$L_{i_k}$: 
\begin{equation}\label{eq.g-mix}
\dot x(t) \, = \, B_k(t)x(t) \ , \qquad B_k(t)\in \cB_{i_k}, \ t \in [b_{k}, a_{k+1}], \quad x(b_k) = A_{i_k i_{k-1}}x(a_k), \quad k =  1, 2, \ldots  , 
\end{equation}
where $(a_k, b_k)$ are given time-intervals of lengths~$\alpha(k) \in \balpha$. 
In analogy with Definition~\ref{d.100}, the union of those intervals 
is a dark domain and its complement to $\re_+$ is  an active domain
$\cT$. Thus, for each $k$, we have a measurable function $B_k: 
[b_{k}, a_{k+1}]\to \cB_{i_k}$.  On every space $L_{i_k}$ in  the path 
$\omega$, we have a continuous-time system~(\ref{eq.g-mix}) on a segment with operators from $\cB_{i_k}$ . The solutions of those systems are concatenated by 
edges-operators $A_{ji}$ corresponding to the path~$\omega$. 
Such a concept of solutions extends to paths of finite length, for which  
continuous-time dynamics are considered on an unbounded interval of the type $[b_k,+\infty)$.

The notions of stability, Lyapunov exponent, extremal and invariant multinorms are 
extended in a direct way from the case of unconstrained mixed systems introduced in Section~\ref{sec:3}. An analogue of 
Theorem~\ref{th.40} on the existence of the corresponding 
multinorms is formulated and proved in the same way. In particular, 
a mixed system on a graph is asymptotically stable if and only if 
there exists a multinorm $\{\|\cdot \|_i\}_{i=1}^n$ 
in which norms of all operators $A_{ji}$ are smaller than one and
for every $k$ and every $x\in L_k, \|x\|_k = 1$, 
all the vectors $Bx , \, B\in \cB_k$, starting at $x$
are directed inside the unit $\|\cdot\|_k$-sphere.

The algorithm of construction of the extremal polytope Lyapunov norm
is also similar to that for unconstrained mixed systems.

\section{Linear switching systems with guaranteed dwell times}\label{sec:5}
%
%

Now we are able to tackle the main problem: to analyse the stability 
of continuous-time linear switching systems with guaranteed mode-dependent dwell times. 
We are going to see that this can be seen as a special case of a mixed system on a 
graph. In particular, its Lyapunov exponent can be approximately computed 
by constructing a polytope  Lyapunov multinorm.   

Let $\dot x = B(t)x, \ B(t) \in \cB$ be a continuous-time linear switching 
system with finite set of modes 
$\cB = \{B_1, \ldots , B_m\}$.  
Suppose that the dwell time of each operator 
$B_k$ is bounded below by a given number $\alpha_k > 0$. 
This means that the set $\{t \ge 0\,  | \, B(t) = B_k\}$, up to a subset of measure zero, consists of intervals of lengths at least $\alpha_k$. This is a linear switching system with guaranteed dwell times, and it will be denoted by 
$[\cB, \balpha]$, where $\balpha = (\alpha_1, \ldots , \alpha_m)$. 

Denote $A_k = e^{\, \alpha_k B_k}$ and $\cA = \{A_1, \ldots , A_m\}$. 
Every switching law of the system $[\cB, \balpha]$ has a discrete set of 
switching points $\{t_i\}_{i \in D}$, where $D$ is either 
$\{1, \ldots , n\}$ or $\n$. Set $t_0 = 0$. Each segment $[t_{i-1}, t_i]\, , \, i\in D$, corresponds to some operator $B_{k_i} \in \cB$ and has length at least 
$\alpha_{k_i}$. Hence the action of the operator $B_{k_i}$ on this segment can be presented as the action of $A_{k_i}$ followed by the action of 
$B_{k_i}$ on the segment $[t_{i-1} + \alpha_{k_i}\, , \, t_{i}]$. 
We obtain a mixed system with discrete part $\cA$ and continuous part $\cB$. 
This system is constrained: the action of an operator $A_k \in \cA$
is followed by a continuous-time trajectory $\dot x = B_k x$
on some segment (possibly empty), which, in turn, is followed by 
the next mode from $\cA$, etc.

Therefore, this is a
mixed system on a directed strongly connected 
graph 
without loops 
 $G = \{g_1, \ldots , g_m\}$, 
where each space $\cL_k$ is equal to $\re^d$ and all incoming 
edges of the vertex $g_k$ are associated with $A_k$. 
Thus, $\cA_{ji}=\{A_{j}\}$ for $
i,j = 1, \ldots , m$ and $i\ne j$. Each family $\cB_k$
attached with the vertex $g_k$ contains only the operator $B_k$. 

Thus, every vertex $g_k$ has $m-1$ incoming edges 
from the remaining $m-1$ vertices, each of them corresponding to the discrete mode $A_k$, and $m-1$ outgoing edges corresponding to the modes $A_i$, $i\ne k$, with $A_i$ going to $g_i$. 

We can resume the previous remarks by the following statement, where the word {\em isomorphic} is used to express identity of trajectories up to natural identifications.  
\begin{theorem}\label{th.50}
A system with guaranteed dwell times $[\cB, \alpha]$ is 
isomorphic to the mixed system on the graph $G$ defined above. 
\end{theorem}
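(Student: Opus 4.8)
The plan is to produce an explicit bijection between the trajectories of $[\cB,\balpha]$ and the trajectories of the mixed system on $G$, and then to verify that corresponding trajectories trace the same state curve once every discrete jump is read as a time-$\alpha_k$ flow of the associated continuous mode. The only algebraic input is the defining identity $A_k=e^{\alpha_k B_k}$, so the argument amounts to unwinding Definition~\ref{d.100} and equation~\eqref{eq.g-mix} while keeping careful track of the time axis; the substance lies in checking that the two combinatorial constraints, namely the guaranteed dwell time on one side and the loop-free structure of $G$ on the other, translate into each other.

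First I would take a trajectory of $[\cB,\balpha]$ and write its switching law as the sequence of maximal intervals of constant mode $[t_{i-1},t_i]$, with mode $B_{k_i}$ and duration $\tau_i=t_i-t_{i-1}$. Maximality forces $k_i\ne k_{i+1}$, the dwell-time hypothesis gives $\tau_i\ge\alpha_{k_i}$, and since $\min_k\alpha_k>0$ the switching points do not accumulate, so the decomposition is well defined on all of $\re_+$ (the last segment being unbounded when there are finitely many switches). On each interval I would split $e^{\tau_i B_{k_i}}=e^{(\tau_i-\alpha_{k_i})B_{k_i}}\,A_{k_i}$, declaring $(t_{i-1},t_{i-1}+\alpha_{k_i})$ a dark interval carrying the jump $A_{k_i}$ and $[t_{i-1}+\alpha_{k_i},t_i]$ an active interval carrying the mode $B_{k_i}$. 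Because $k_i\ne k_{i+1}$, the vertex sequence $g_{k_1}\to g_{k_2}\to\cdots$ is an admissible path of $G$ (every edge entering $g_{k_i}$ carries $A_{k_i}$ and there are no loops), and the resulting data is exactly a trajectory of~\eqref{eq.g-mix}.

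Conversely I would start from a trajectory of the mixed system along a path $g_{k_1}\to g_{k_2}\to\cdots$, whose dark intervals of length $\alpha_{k_i}$ (jump $A_{k_i}$) alternate with active intervals of length $s_i\ge0$ (flow $B_{k_i}$). Using $A_{k_i}=e^{\alpha_{k_i}B_{k_i}}$ I would fill each dark interval $(a,b)$ by the flow $t\mapsto e^{(t-a)B_{k_i}}x(a)$; this converts the mixed trajectory into a genuine continuous curve solving $\dot x=B(t)x$ with $B(t)=B_{k_i}$ on an interval of total length $\alpha_{k_i}+s_i\ge\alpha_{k_i}$. The absence of loops again yields $k_i\ne k_{i+1}$, so the dwell-time constraint holds and we recover a trajectory of $[\cB,\balpha]$ with the same initial condition. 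These two assignments are mutually inverse, and under the stated flow-interpolation the two state curves agree at every time, which is the required isomorphism.

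The step deserving the most care is the natural identification, since a mixed-system trajectory is only defined on its active domain: I would state precisely that it is identified with the continuous curve obtained by the above flow-interpolation, and check that this interpolation is compatible with the dark jumps, which holds exactly because $A_k$ is the time-$\alpha_k$ flow of $B_k$ rather than an arbitrary operator. The remaining bookkeeping concerns the initial segment: the first jump $A_{k_1}$ is placed on the dark interval $(0,\alpha_{k_1})$, which is legitimate because the active domain still contains the point $0$, so $x(0)$ plays the role of the initial condition and the source vertex of the first edge is immaterial for the dynamics.
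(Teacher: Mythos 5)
Your proposal is correct and follows essentially the same route as the paper, which proves the theorem only implicitly through the construction preceding its statement: the key step in both is the factorization $e^{\tau_i B_{k_i}}=e^{(\tau_i-\alpha_{k_i})B_{k_i}}A_{k_i}$ turning each constant-mode segment into a dark jump of length $\alpha_{k_i}$ followed by a free flow of $B_{k_i}$, with the loop-free graph encoding the change of mode at each switch. Your write-up is in fact more careful than the paper's, since it makes the inverse assignment, the non-accumulation of switching times, and the flow-interpolation identification explicit.
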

This enables us to construct an extremal polytope multinorm and to
compute the Lyapunov exponent by the algorithm presented in Section~\ref{sec:4}.

\begin{ex}[Two matrices]  \rm
\label{ex:twomatrices}

Let us consider ${\cB}=\{ B_1, B_2 \}$   with dwell times given by $\alpha_1$ and $\alpha_2$.
We let $\tilde B_1 = e^{\tau B_1}$, $\tilde B_2=e^{\tau B_2}$, $A_1=e^{\alpha_1 B_1}$ and $A_2=e^{\alpha_2 B_2}$. 
The general picture is illustrated by Figure \ref{fig:genpic2}. 
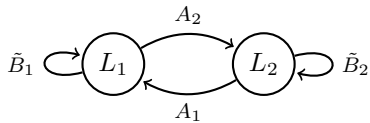
\begin{figure}[ht]
\begin{center}
\begin{tikzpicture}[->,shorten >=1pt,auto,node distance=2cm,
                    thick,main node/.style={circle,draw,font=\sffamily\bfseries}]

  \node[main node] (1) {$L_{1}$};
  \node[main node] (2) [right of=1] {$L_2$};

  \path[every node/.style={font=\sffamily\footnotesize}]
    (1) edge [bend left] node {$A_2$} (2)
        edge [loop left] node [left] {$\tilde B_1$} (1)
    (2) edge [bend left] node {$A_1$} (1)
		    edge [loop right] node [right] {$\tilde B_2$} (2);

\end{tikzpicture}
\caption{Graph associated with Example~\ref{ex:twomatrices} with general $\tau$}\label{fig:genpic2}
\end{center}
\end{figure}

\begin{figure}[ht]
\begin{center}
\begin{tikzpicture}[->,shorten >=1pt,auto,node distance=2cm,
                    thick,main node/.style={circle,draw,font=\sffamily\bfseries}]

  \node[main node] (1) {$L_{1}$};
  \node[main node] (2) [right of=1, node distance = 2.5in] {$L_2$};
	\node[main node] (3) [below right of=1, node distance = 2.0in] {$L_3$};

  \path[every node/.style={font=\sffamily\footnotesize}]
    (1) edge [bend left]  node {$A_2$} (2)
    (1) edge [loop left]  node [left] {$\tilde B_1$} (1)
		(1) edge [bend left]  node [left] {$A_3$} (3)
    (2) edge [bend right = 75]  node {$A_1$} (1)
		(2) edge [loop right] node [right] {$\tilde B_2$} (3)
	  (2) edge [bend left]  node {$A_3$} (3)
		(3) edge [bend left]  node {$A_2$} (2)
		(3) edge [loop below] node [below] {$\tilde B_3$} (3)		
    (3) edge [bend left]  node [left] {$A_1$} (1);    

\end{tikzpicture}
\caption{Graph associated with Example~\ref{example5}
}\label{fig:genpic3}
\end{center}
\end{figure}
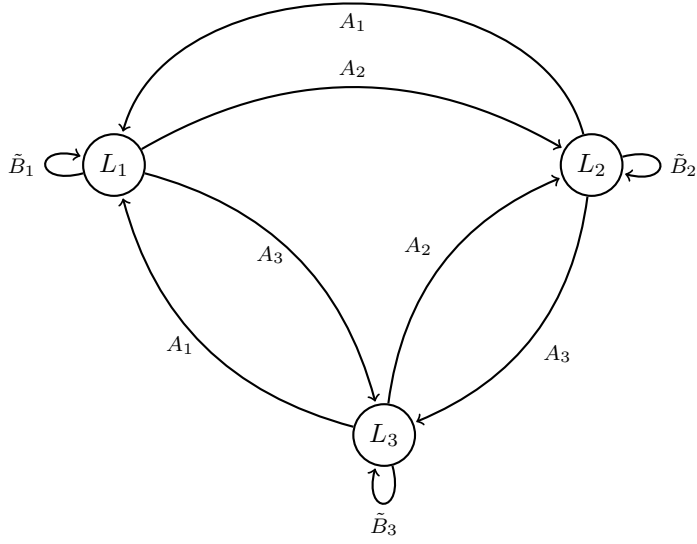


\paragraph*{A numerical example}

Let ${\cB}=\{ B_1, B_2 \}$   with dwell times given by $\alpha_1=1/2$ and $\alpha_2=1$ with
\begin{eqnarray*}
B_1 & = & \left(\begin{array}{rr}
   0 \ & \  0 \\
   1 \ & \  0
\end{array} \right)
\\
B_2 & = & \left(\begin{array}{rr}
   0.60459\ldots &  1.20919\ldots \\
  -1.20919\ldots & -0.60459\ldots
\end{array} \right)
\end{eqnarray*}
which are matrix logarithms,
\begin{eqnarray*}
B_1 = \log \left( \left(\begin{array}{rr}
 1 & 0
\\
 1 & 1
\end{array} \right) \right), \quad
B_2 = \log \left( \left(\begin{array}{rr}
 1 & 1
\\
-1 & 0
\end{array} \right) \right).
\end{eqnarray*}

\begin{itemize}

\item[(i) ] Let us first fix $\tau=1$ and set $\tilde B_1 = e^{\tau B_1}$, $\tilde B_2=e^{\tau B_2}$.

This case is {\em compatible} with $\alpha_1$ and $\alpha_2$ so that there are
no constraints and the problem is a classical unconstrained joint spectral radius
computation for the family $\{ \tilde B_1,\tilde B_2 \}$. We get the following spectrum maximizing product, 
\[
P_1=\tilde B_1^3\,\tilde B_2
\]
which identifies the switching signal that determines the highest growth in the trajectories of the associated
linear system with no constraints, that is the periodic signal $(111211121112\ldots)$, where every value is taken 
on an interval of length $\tau=1$. 

We have $\rho=\rho(P)^{1/4} = 1.389910663524148$, which gives a lower bound for the Lyapunov exponent:
\begin{equation}
\label{eq:firstbound}
\sigma \ge \beta(\tau) = 0.329239474231204.
\end{equation}
Applying Algorithm \ref{algoP2} we obtain that the extremal polytope is invariant for the
shifted vector field $B_{1,2} - (\beta(\tau) + 0.425\ldots) {\rm Id}$, from which we get the upper bound
\begin{equation}
\label{eq:upperbound}
\sigma \le 0.754\ldots .
\end{equation}

\medskip

\item[(ii) ] Let us fix $\tau=2/5$. We consider the approach with $4$ matrices in Figure \ref{fig:genpic2}.
We let $\tilde B_1 = e^{\tau B_1}$, $\tilde B_2=e^{\tau B_2}$, $A_1=e^{\alpha_1 B_1}$ and $A_2=e^{\alpha_2 B_2}$. 

We discover that the following is a spectrum maximizing product,
\[
P=  \tilde B_1^5\,A_1\,  A_2
\]
which identifies the extremal (constrained) periodic signal 
$$(1111122)^k$$ where every value is taken
on an interval of length $1/2$ (see Figure \ref{fig:signals}). 

We have $\rho=\rho(P)^{2/7} = 1.392483264463604$, which gives a lower bound for the 
Lyapunov exponent:
\begin{equation}
\label{eq:secondbound}
\sigma \ge 0.331088674408556 = \beta(\tau),
\end{equation}
improving \eqref{eq:firstbound}.

The polytope algorithm takes $18$ iterations to converge and produces the multinorm in Figure \ref{fig:Ex25_Step5}.

Applying Algorithm \ref{algoP2} we obtain that the extremal polytope is invariant for the
shifted vector field $B_{1,2} - (\beta(\tau) + 0.313\ldots) {\rm Id}$, from which we get the upper bound
\begin{equation}
\label{eq:secondupperbound}
\sigma \le 0.643\ldots 
\end{equation}
which also improves \eqref{eq:upperbound}.

\begin{figure}
\centering
\begin{minipage}[thb]{0.4\linewidth}
  \centering
  \centerline{\includegraphics[width=\linewidth]{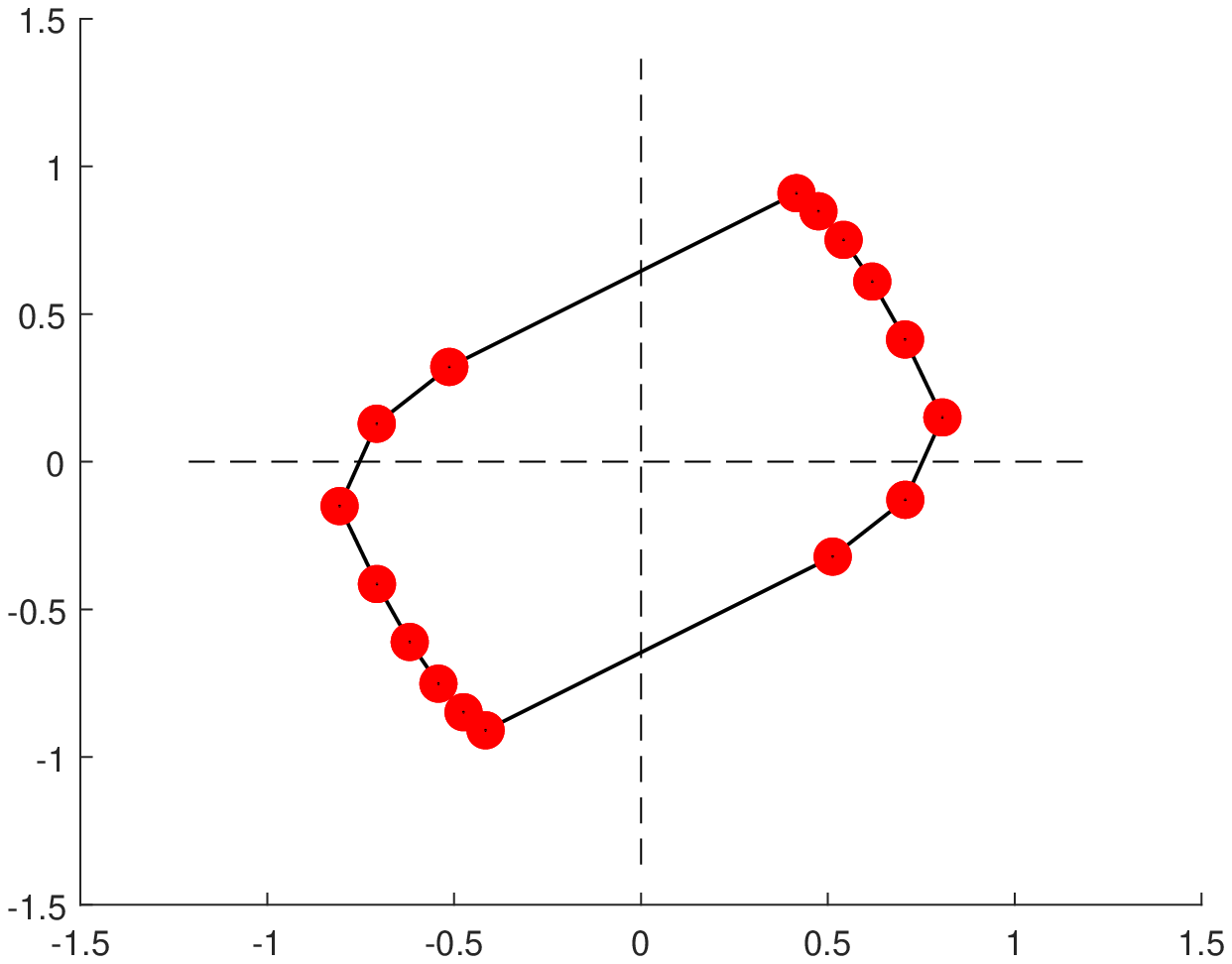}}
  \vspace{1.5cm}
\small\centerline{$L_{1}$}\medskip
\end{minipage}
\hfill
\begin{minipage}[hb]{0.4\linewidth}
  \centering
  \centerline{\includegraphics[width=\linewidth]{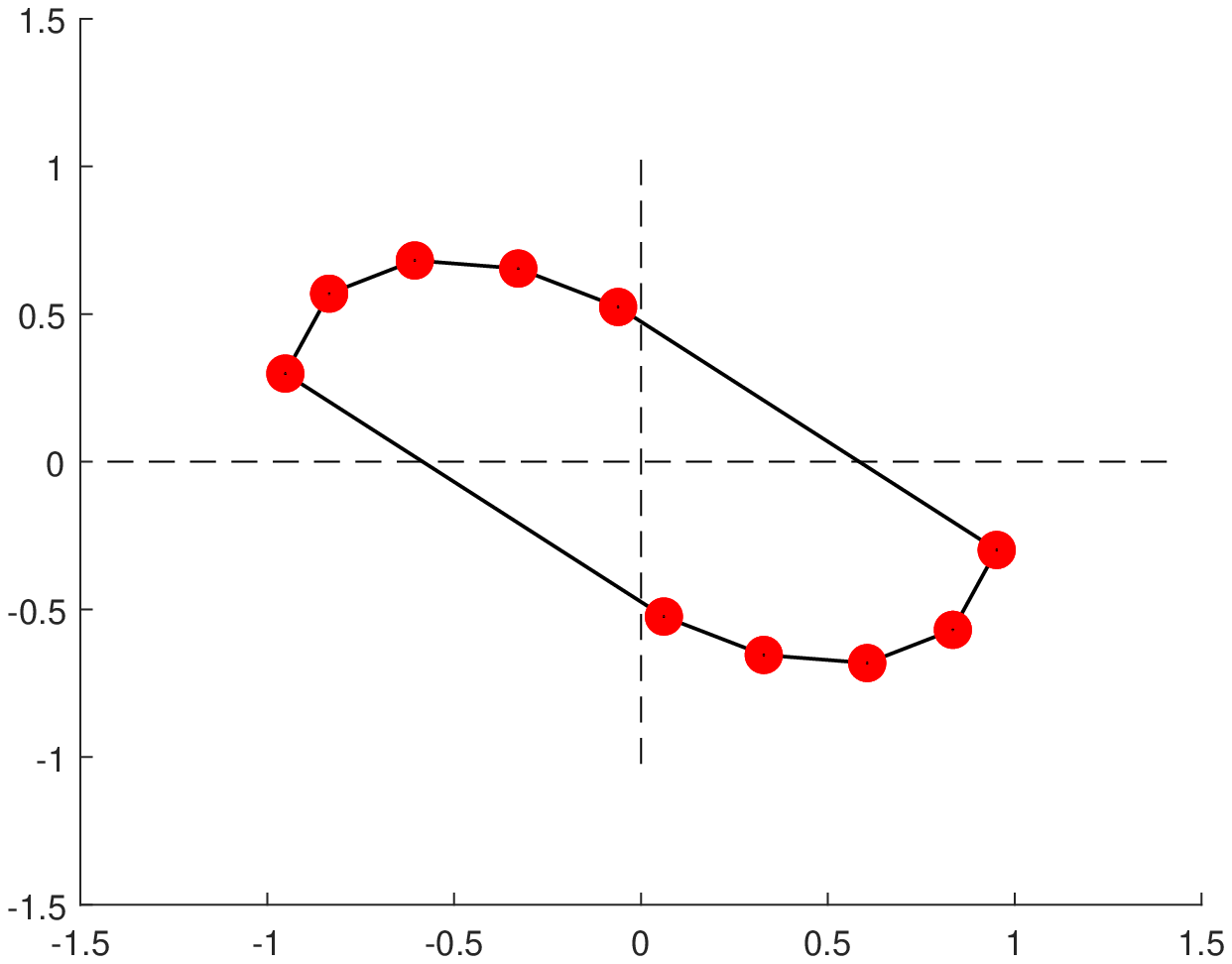}}
  \vspace{1.5cm}
\small\centerline{$L_{2}$}\medskip
\end{minipage}

\vspace*{-0.3cm}
\caption{The polytope extremal multinorm  for Example~\ref{ex:twomatrices} with $\tau=2/5$}\label{fig:Ex25_Step5}

\end{figure}

\medskip

\item[(iii) ] Let us fix $\tau=1/10$. We consider the approach with $4$ matrices in Figure \ref{fig:genpic2}.
We let $\tilde B_1 = e^{\tau B_1}$, $\tilde B_2=e^{\tau B_2}$, $A_1=e^{\alpha_1 B_1}$ and $A_2=e^{\alpha_2 B_2}$. 

We discover that the following is a spectrum maximizing product,
\[
P=  \tilde B_1^{21}\,A_1\,  A_2
\]
which identifies the extremal (constrained) periodic signal 
$$(111111111111111111111111112222222222)^k$$ where every value is taken 
on an interval of length $1/10$ (see Figure \ref{fig:signals}). 

We have $\rho=\rho(P)^{5/36} = 1.392866831588511$, which gives a lower bound for the 
Lyapunov exponent:
\begin{equation}
\label{eq:thirdbound}
\sigma \ge 0.331364091942514 = \beta(\tau),
\end{equation}
improving \eqref{eq:secondbound}.

The polytope algorithm takes $18$ iterations to converge and produces the multinorm in Figure \ref{fig:Ex110_Step18}.

Applying Algorithm \ref{algoP2} we obtain that the extremal polytope is invariant for the
shifted vector field $B_{1,2} - (\beta(\tau) + 0.279\ldots) {\rm Id}$, from which we get the upper bound
\begin{equation}
\label{eq:thirdupperbound}
\sigma \le 0.610\ldots 
\end{equation}
which also improves \eqref{eq:secondupperbound}.

\begin{figure}
\centering
\begin{minipage}[thb]{0.4\linewidth}
  \centering
 \centerline{\includegraphics[width=\linewidth]{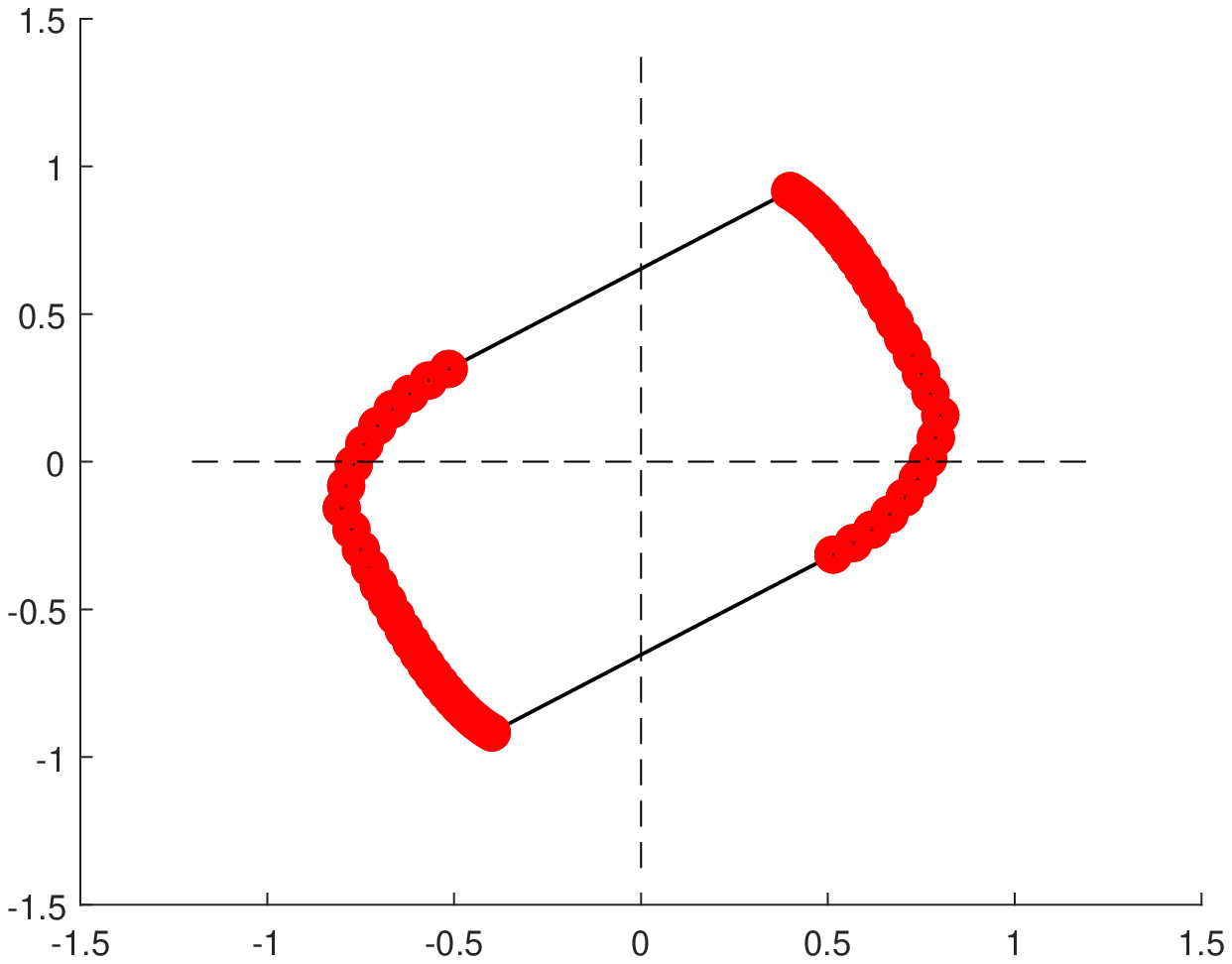}}
  \vspace{1.5cm}
\small\centerline{$L_{1}$}\medskip
\end{minipage}
\hfill
\begin{minipage}[hb]{0.4\linewidth}
  \centering
  \centerline{\includegraphics[width=\linewidth]{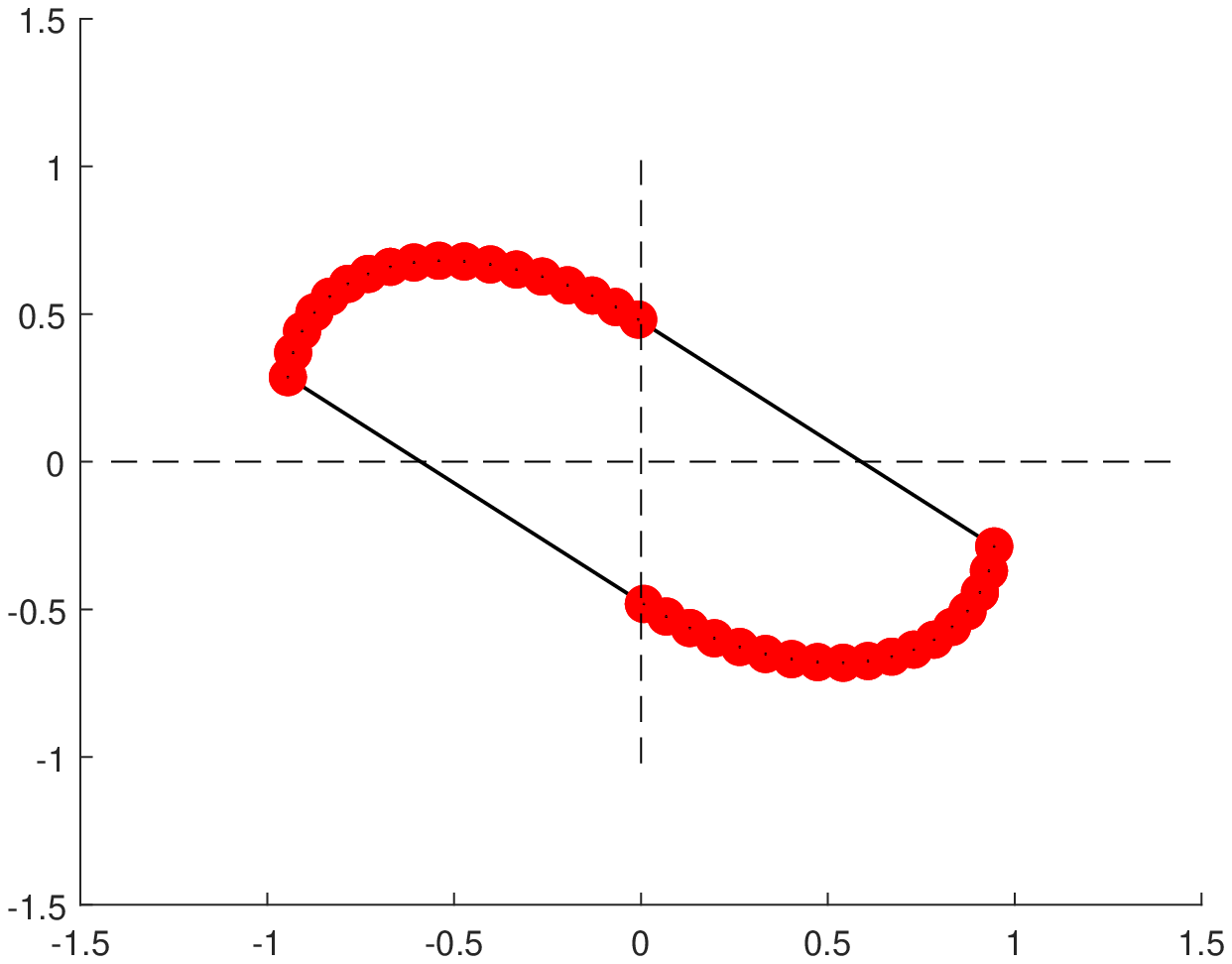}}
  \vspace{1.5cm}
\small\centerline{$L_{2}$}\medskip
\end{minipage}

\vspace*{-0.3cm}
\caption{The polytope extremal multinorm  for Example~\ref{ex:twomatrices} with $\tau=1/10$}\label{fig:Ex110_Step18}

\end{figure}

\begin{figure}[ht]
\begin{center}
\includegraphics[width=0.4\textwidth]{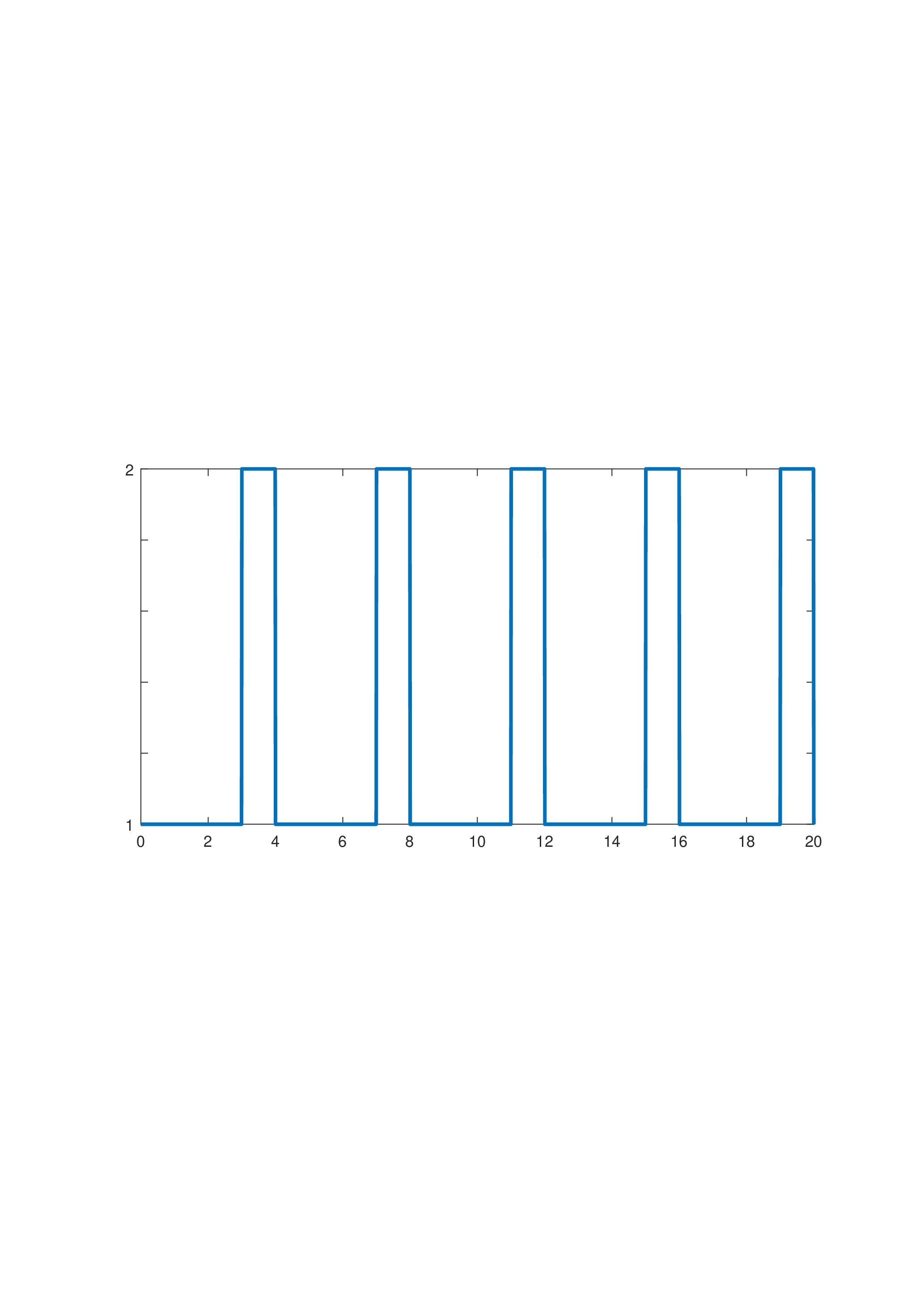} \hskip 1cm \includegraphics[width=0.4\textwidth]{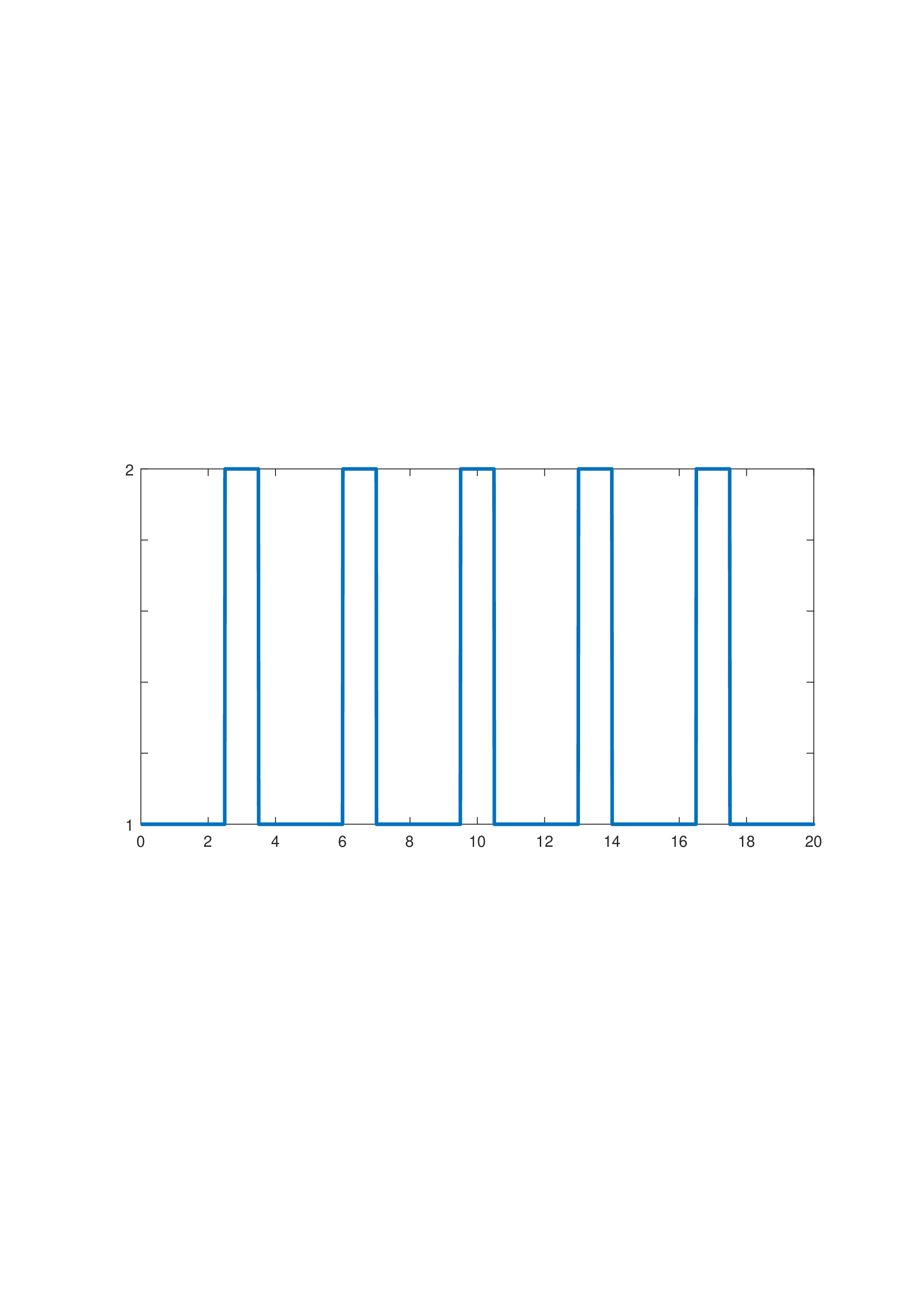} 
\caption{Optimal signals computed for $\tau=1$ (left) and $\tau=1/2$ and $\tau=2/5$ (right). They both 
respect 
dwell times constraints; the signal on the right corresponds to a higher rate of growth. \label{fig:signals}}
\end{center}
\end{figure}

\end{itemize}

\end{ex}

\begin{ex}[Three matrices] \label{example5} \rm

Let us consider ${\cB}=\{ B_1, B_2, B_3 \}$   with dwell times given by $\alpha_1$, $\alpha_2$ and $\alpha_3$.
We let $\tilde B_1 = e^{\tau B_1}$, $\tilde B_2=e^{\tau B_2}$, $\tilde B_3=e^{\tau B_3}$, $A_1=e^{\alpha_1 B_1}$ and $A_2=e^{\alpha_2 B_2}$, $A_3=e^{\alpha_3 B_3}$. 
The general picture is illustrated by Figure \ref{fig:genpic3}. 

\end{ex}

\bigskip

\subsection*{Appendix}

\begin{lemma}\label{lem:fekete-convex}
Let $f:\mathbb{N}\to \mathbb{R}$ be such that for every $j,k\in\mathbb{N}$, $j+k>0$, there exists  $\nu_{j,k}\in [0,1]$ such that 
$f(j+k)\le \nu_{j,k}f(j)+(1-\nu_{j,k})f(k)$ and $\nu_{j,k}\le c \frac{j}{j+k}$ with $c$ independent of $j$ and $k$. 
Then $\lim_{k\to \infty}f(k)$ exists and is equal to $\inf_{k\in \mathbb{N}}f(k)$.
\end{lemma}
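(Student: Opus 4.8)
The plan is to set $L=\inf_{k\in\mathbb{N}}f(k)$ and exploit that $f(k)\ge L$ for all $k$, which gives $\liminf_k f(k)\ge L$ for free; so the whole task reduces to the reverse inequality $\limsup_k f(k)\le L$. I would obtain this from the sharper statement that $\limsup_{n\to\infty} f(n)\le f(m)$ for \emph{every} fixed $m\in\mathbb{N}$, and then take the infimum over $m$.

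The first step is an elementary observation that does not use the weight bound at all: for every fixed $m$ one has $f(qm)\le f(m)$ for all $q\in\mathbb{N}$. I would prove this by induction on $q$, the case $q=1$ being trivial. For the inductive step I write $(q+1)m=m+qm$ and apply the hypothesis with $j=m$, $k=qm$, obtaining $f((q+1)m)\le \nu_{m,qm}f(m)+(1-\nu_{m,qm})f(qm)$; since $\nu_{m,qm}\in[0,1]$ and, by the inductive hypothesis, $f(qm)\le f(m)$, the right-hand side is a convex combination of two numbers both $\le f(m)$ and is therefore $\le f(m)$. The point is that, whatever the actual weights are, a convex combination of copies of the single value $f(m)$ cannot exceed $f(m)$.

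The second step, where the quantitative bound $\nu_{j,k}\le c\frac{j}{j+k}$ finally enters, handles the remainder. Given $n>m$, write $n=qm+r$ with $q\ge 1$ and $r\in\{1,\dots,m\}$, and split $n=r+qm$ using the hypothesis with $j=r$, $k=qm$, to get
\[
f(n)\le \nu_{r,qm}\,f(r)+(1-\nu_{r,qm})\,f(qm).
\]
By Step~1, $f(qm)\le f(m)$, and since $1-\nu_{r,qm}\ge 0$ this yields $f(n)\le f(m)+\nu_{r,qm}\bigl(f(r)-f(m)\bigr)$. Now the weight bound gives $0\le \nu_{r,qm}\le c\frac{r}{n}\le c\frac{m}{n}$, so, setting $D_m=\max_{1\le s\le m}\bigl(f(s)-f(m)\bigr)^+$ (a finite constant depending only on $m$), I obtain $f(n)\le f(m)+c\frac{m}{n}D_m$. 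Letting $n\to\infty$ gives $\limsup_n f(n)\le f(m)$; taking the infimum over $m$ gives $\limsup_n f(n)\le L$, which together with $\liminf_n f(n)\ge L$ proves $\lim_n f(n)=L$. The same estimate also covers the case $L=-\infty$, where it already forces $f(n)\to-\infty$.

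I expect the only genuine subtlety to be recognizing the right decomposition. The tempting reduction to ordinary Fekete via $b_n=n f(n)$ (subadditive when the weights are exactly $\frac{j}{j+k}$) breaks down here, because the constant $c>1$ introduces a spurious amplifying factor $c$ at each merge and would even suggest exponential growth. The decisive trick is instead to write $f$ at multiples of $m$ as a convex combination of copies of the \emph{single} good value $f(m)$ (Step~1), so that the unknown weights become irrelevant, and to invoke $\nu_{j,k}\le c\,j/(j+k)$ \emph{only} to make the single leftover block of size $r\le m$ contribute a term decaying like $m/n$ (Step~2).
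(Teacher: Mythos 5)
Your proof is correct and follows essentially the same route as the paper's: the induction giving $f(qm)\le f(m)$, the Euclidean division $n=qm+r$, and the use of $\nu_{r,qm}\le c\,r/n$ to make the remainder term decay like $m/n$. Your use of the positive part $D_m=\max_s\bigl(f(s)-f(m)\bigr)^+$ is in fact a slightly more careful bookkeeping than the paper's constant $F(m)=\max_r f(r)$, but the argument is the same.
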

\begin{proof} The argument follows the classical proof for sub-additive functions.

Set $f_{\min}:=\inf_{k\in \mathbb{N}}f(k)$ and consider an arbitrary real number $A>f_{\min}$.
Fix a positive integer $m$ 
such that $f(m)< A$. Performing the Euclidean division of every integer $n\geq m$ by $m$ allows one to write
$n=qm+r$ with $q\geq 1$ and $0\leq r<m$. Using the hypothesis on $f$, one has that there exists $\nu_{r,qm}\in [0,1]$ such that
$$
f(n)\leq \nu_{r,qm}f(r)+(1-\nu_{r,qm})f(qm),
$$ 
and $\nu_{r,qm}\leq c\frac{r}{n}$. A trivial induction on $q\ge 1$ yields that  $f(qm)\leq f(m)$. Setting $F(m)=\max_{0\leq r<m}f(r)$, we  deduce that for every $n\geq m$, one has that 
\begin{equation}\label{eq:F0}
f(n)\leq c\frac{m}{n}F(m)+f(m), \quad \forall n\geq m.
\end{equation} 
Letting $n$ tend to infinity, we get that $\limsup_{n\to\infty}f(n)< A$. 
The conclusion follows letting $A$ tend to $f_{\min}$.
\end{proof}

%

\subsection*{Acknowledgments}

The second author (NG) acknowledges financial support from Italian INdAM - GNCS
(Istituto Nazionale di Alta Matematica - Gruppo Nazionale di Calcolo Scientifico).

The work of the fourth author (VP) was prepared within the framework of the HSE 
University Basic Research Program and funded by the Russian Academic Excellence Project 
`5-100'.
Scientific results of Sections 3 and  4 of this paper were obtained with support of RSF Grant 
17-11-01027.

\bibliographystyle{abbrv}

\end{document}